\numberwithin{equation}{section}
\newcounter{hours}\newcounter{minutes}
\theoremstyle{plain}
\declaretheorem[title=Theorem, parent=section]{thm}
\declaretheorem[title=Lemma,sibling=thm]{lem}
\declaretheorem[title=Corollary,sibling=thm]{cor}
\declaretheorem[title=Proposition,sibling=thm]{prop}
\declaretheorem[title=Definition,sibling=thm]{DEF}
\declaretheorem[title=Remark,sibling=thm]{rem}
\def\I{\mathcal I}
\def\M{{\mathcal M}}
\def\real{{\mathbb R}}
\def\ind{{\mathbbm{1}}}
\def\integer{{\mathbb Z}}
\def\ep{\varepsilon}
\def\al{\alpha}
\def\del{\delta}
\def\Om{\Omega}
\def\gam{\gamma} 
\def\lam{\lambda}
\def\Lam{\Lambda}
\def\grad{\nabla}
\def\Tr{\textnormal{Tr}}
\def\Id{\textnormal{Id}}
\def\intersect{\cap}
\DeclareMathOperator*{\osc}{osc}
\newcommand{\abs}[1]{\left| #1 \right|}
\newcommand{\ds}{\displaystyle}
\newcommand{\norm}[1]{\lVert#1\rVert}
\begin{document}

\title{Neumann Homogenization via Integro-Differential Operators  
}

\begin{abstract}
In this note we describe how the Neumann homogenization of fully nonlinear elliptic equations can be recast as the study of nonlocal (integro-differential) equations involving elliptic integro-differential operators on the boundary.  This is motivated by a new integro-differential representation for nonlinear operators with a comparison principle which we also introduce.  In the simple case that the original domain is an infinite strip with almost periodic Neumann data, this leads to an almost periodic homogenization problem involving a fully nonlinear integro-differential operator on the Neumann boundary.  This method gives a new proof-- which was left as an open question in the earlier work of Barles- Da Lio- Lions- Souganidis (2008)-- of the result obtained recently by Choi-Kim-Lee (2013), and we anticipate that it will generalize to other contexts.
\end{abstract}

\author{Nestor Guillen}
\author{Russell W. Schwab}

\address{Department of Mathematics\\
University of California, Los Angeles\\
520 Portola Plaza,
Los Angeles, CA  90095}
\email{nestor@math.ucla.edu}

\address{Department of Mathematics\\
Michigan State University\\
619 Red Cedar Road \\
East Lansing, MI 48824}
\email{rschwab@math.msu.edu}

\date{\today, arXiv version 3}

\thanks{The work of N. Guillen was partially supported by NSF DMS-1201413.  We wish to thank Inwon Kim for important remarks regarding \cite{ChoiKim-2013HomogNeumannArXiv}, \cite{ChKiLe-2012HomogNeumann}.  We also thank J\'er\'emy Firozaly and Cyril Imbert for pointing out an incorrect statement involving almost periodic functions in a previous version of this work, which led to a more direct proof of the main result.}

\keywords{Dirichlet to Neumann, Homogenization, Integro-Differential Representation, Nonlocal Boundary Operators}
\subjclass[2010]{
%updated during March 2014--RS
35B27,  	%Homogenization; equations in media with periodic structure
35J60,  	%Nonlinear elliptic equations
35J99, %pde other
35R09,  	%Integro-partial differential equations
45K05,  	%Integro-partial differential equations
47G20, %integro-differential operators
49L25,  	%Optimal Control Viscosity solutions
49N70,  	%Differential games
60J75, %jump processes
93E20 %optimal stoch. control
}

\maketitle
\baselineskip=14pt
\pagestyle{plain}              % page nos. at bottom, no headline
\pagestyle{headings}		% running headline and page nos. at top
\markboth{N. Guillen and R. Schwab}{Neumann Homogenization via Integro-Differential Operators}
%%%%%%%%%%%%%%%%%%%%%%%%%%%%%%%%%%%%%%%%%%%%%%

%%%%%%%%%%%%%%%%%%%%%%%%%%%%%%%%%%%%%%%%%%%%%%
%%%%%%%%%%%%%%%%%%%%%%%%%%%%%%%%%%%%%%%%%%%%%%
%%%%%%%%%%%%%%%%%%%%%%%%%%%%%%%%%%%%%%%%%%%%%%
%%%%%%%%%%%%%%%%%%%%%%%%%%%%%%%%%%%%%%%%%%%%%%
%%%%%%%%%%%%%%%%%%%%%%%%%%%%%%%%%%%%%%%%%%%%%%
\section{Introduction}\label{sec:Intro}
In this note, we introduce a method for Neumann homogenization (some background is given in Section \ref{sec:Background}) which exploits the Dirichlet-to-Neumann operator for fully nonlinear equations.  The strategy is to recast the original problem-- via the Dirichlet-to-Neumann operator-- as a global nonlocal homogenization problem posed on the boundary as well as to introduce an integro-differential inf-sup representation for the Dirichlet-to-Neumann map (Section \ref{sec:PerfectWorld}).  Using the inf-sup representation as motivation, this boundary homogenization becomes amenable to the more standard techniques which are a blend of the methods of \cite{Ishii-2000AlmostPeriodicHJHomog} and \cite{Schw-10Per} (although in a simplified setting).

The family of equations we study are fully nonlinear elliptic equations in an infinite strip with oscillatory Neumann data, given by

\begin{equation}\label{eqIntro:MainStripDomain}
	\begin{cases}
		\ds F(D^2u^\ep) = 0\ &\text{in}\ \Sigma^1\\
		u^\ep =0\ &\text{on}\ \Sigma_1\\
		\ds \partial_\nu u^\ep(x) = g(\frac{x}{\ep})\ &\text{on}\ \Sigma_0.
	\end{cases}
\end{equation}
The unit vector, $\nu$, gives the orthogonal to the domain, and
we use the following notation for the three pieces of the infinite strip:

\begin{align*}
	\Sigma^r &= \{ X\in\real^{d+1}\ :\ 0<X\cdot\nu<r\}\\
	\Sigma_r &= \{ X\in\real^{d+1}\ :\ X\cdot\nu=r\},\\
	\Sigma_0 &= \{ X\in\real^{d+1}\ :\ X\cdot\nu=0\}.
\end{align*}
We would like to point out that the extra Dirichlet condition, $u^\ep=0$ on $\Sigma_1$ in (\ref{eqIntro:MainStripDomain}), is artificial and only used to make existence/uniqueness a non-issue.  One could work in a half-space domain ($\{X\in\real^{d+1}\ :\ 0<X\cdot\nu\}$), but then must deal with various growth conditions on the set of admissible solutions, and we have chosen to avoid such issues by instead using the Dirichlet condition.

The main theorem says that oscillations introduced via $g$ on the boundary effectively average themselves out, and the solutions $u^\ep$ converge to an affine profile depending on $g$ and $\nu$. 

\begin{thm}\label{thm:HomogMain}
	If $g\in C^\gam(\Sigma_0)$ is almost periodic on $\Sigma_0$ (definition \ref{def:UniformApproxByPeriodic}) and $F$ is uniformly elliptic and positively 1-homogeneous (assumptions (\ref{eqSetUp:EllipticF}), (\ref{eqSetUp:1HomF})), then there exists a unique constant, $\bar g$, depending on $F$, $g$, and the choice to locate the auxiliary Dirichlet condition on $\Sigma_1$, such that $u^\ep\to\bar u$ and $\bar u$ is the unique solution of (\ref{eqIntro:MainStripDomain}) with Neumann condition $\partial_\nu \bar u(x)=\bar g$ on $\Sigma_0$.
\end{thm}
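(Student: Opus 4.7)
The plan is to execute the strategy outlined in the abstract and Section~\ref{sec:PerfectWorld}: recast (\ref{eqIntro:MainStripDomain}) as a nonlocal equation on $\Sigma_0$ via the Dirichlet-to-Neumann operator, and then treat the resulting boundary equation by a corrector-based almost-periodic homogenization argument in the spirit of \cite{Ishii-2000AlmostPeriodicHJHomog} and \cite{Schw-10Per}. For $v$ on $\Sigma_0$, let $U_v$ denote the unique solution of $F(D^2U_v)=0$ in $\Sigma^1$ with $U_v=v$ on $\Sigma_0$ and $U_v=0$ on $\Sigma_1$, and define the Dirichlet-to-Neumann map by $\I[v](x):=\partial_\nu U_v(x)$ for $x\in\Sigma_0$. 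By the integro-differential inf-sup representation introduced earlier in the paper, $\I$ is a translation-invariant (along $\Sigma_0$), positively 1-homogeneous, uniformly elliptic nonlocal operator on $\Sigma_0$ which satisfies the comparison principle, so that solving (\ref{eqIntro:MainStripDomain}) is equivalent to solving
\[
\I[v^\ep](x)=g(x/\ep),\qquad x\in\Sigma_0,
\]
for the trace $v^\ep=u^\ep|_{\Sigma_0}$, after which $u^\ep=U_{v^\ep}$.

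The 1-homogeneity of $F$ and the invariance of (\ref{eqIntro:MainStripDomain}) under translations tangential to $\Sigma_0$ force any candidate limit $\bar u$ to be affine in $X\cdot\nu$, so necessarily $\bar u(X)=\bar g\,(X\cdot\nu-1)$ and it suffices to identify the unique constant $\bar g\in\real$. Rescaling by $\tilde u^\ep(Y):=\ep^{-1}u^\ep(\ep Y)$ converts (\ref{eqIntro:MainStripDomain}) to $F(D^2\tilde u^\ep)=0$ on the widening strip $\Sigma^{1/\ep}$ with $\tilde u^\ep=0$ on $\Sigma_{1/\ep}$ and $\partial_\nu\tilde u^\ep=g$ on $\Sigma_0$; the formal $\ep\to 0$ limit is a half-space problem whose Dirichlet-to-Neumann operator $\I_\infty$ governs the cell problem
\[
\I_\infty[w](y)=g(y)-\bar g,\qquad y\in\Sigma_0.
\]
I would construct $\bar g$ by the standard ergodic/small-$\delta$ approximation: produce unique bounded solutions $w_\delta$ of $\delta w_\delta+\I_\infty[w_\delta]=g$ on $\Sigma_0$, obtain $\|\delta w_\delta\|_\infty\le\|g\|_\infty$ and uniform H\"older equicontinuity of $w_\delta-w_\delta(0)$ from ellipticity of $\I_\infty$, and set $\bar g:=-\lim_{\delta\to 0}\delta w_\delta(0)$. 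The almost-periodicity of $g$, combined with translation invariance of $\I_\infty$, guarantees--by the argument of \cite{Ishii-2000AlmostPeriodicHJHomog} transported to the nonlocal setting as in \cite{Schw-10Per}--that this limit exists and is independent of basepoint, which simultaneously yields a bounded (in fact almost periodic) corrector $w$ in the limit.

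Convergence $u^\ep\to\bar u$ then follows from a perturbed test function argument: adding the rescaled corrector $\ep\,w_\delta(x/\ep)$ to $\bar u$ near $\Sigma_0$ produces admissible test functions for which the boundary defect is $g(x/\ep)-\bar g-\delta w_\delta(x/\ep)$; this vanishes uniformly as $\ep,\delta\to 0$ along an appropriate diagonal, and the comparison principle for $F$ in $\Sigma^1$ pins down $u^\ep\to\bar u$ uniformly on $\overline{\Sigma^1}$. The main obstacle is the cell problem in the merely almost-periodic (as opposed to periodic) setting: compactness on a fundamental cell is unavailable, so the almost-periodicity of $g$ must be propagated through the nonlocal corrector $w_\delta$ using only the translation invariance and ellipticity of $\I_\infty$. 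This is precisely where the integro-differential reformulation earns its keep, because it places the problem on $\Sigma_0$ in a framework to which Ishii's almost-periodic technique applies directly.
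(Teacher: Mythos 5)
Your overall strategy---recast (\ref{eqIntro:MainStripDomain}) as $\I[v^\ep](x)=g(x/\ep)$ on $\Sigma_0$, use almost-periodicity plus translation invariance plus comparison to drive $v^\ep$ to a constant, and read off $\bar u$ as the affine profile---is aligned with the paper's. But there are several substantive gaps where you rely on objects or estimates the paper deliberately avoids, and those choices are precisely what make the paper's argument close.

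First, you lean on the integro-differential inf-sup representation of Theorem~\ref{thm: nonlinear Courrege} to assert that $\I$ is ``uniformly elliptic'' in a sense usable for regularity. The paper explicitly disclaims this: the structure of the L\'evy measures $\mu^{ab}$ appearing in \eqref{eqPerfectWorldI1Rep} is unknown, and the paper states it does \emph{not} invoke Theorem~\ref{thm: nonlinear Courrege} in the proof---the representation is heuristic only. The properties it actually uses (translation invariance, 1-homogeneity, comparison, the ``extremal'' sandwich via $M^{r,\pm}$) are proved directly from the PDE in Section~\ref{sec:Setup}, and H\"older regularity for $v^\ep$ is obtained not from any nonlocal ellipticity, but by pulling back the Milakis--Silvestre estimates for the original $(d{+}1)$-dimensional Neumann problem (Theorem~\ref{thm:NonlocalMilakisSilvestreGlobalHolderAndC1gam}). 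Your claim of ``uniform H\"older equicontinuity of $w_\delta-w_\delta(0)$ from ellipticity of $\I_\infty$'' therefore has no available proof.

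Second, your cell problem is posed with a half-space Dirichlet-to-Neumann operator $\I_\infty$ together with a vanishing discount $\delta w_\delta+\I_\infty[w_\delta]=g$. The paper never passes to the half-space: the Introduction explains that the Dirichlet condition on $\Sigma_1$ is there precisely to dodge the existence/uniqueness and growth-condition issues that arise on $\Sigma^\infty$. Instead of $\I_\infty$, the paper works with the finite-strip family $\I^{r}$ with $r=1/\ep$, and the uniqueness of the limiting constant is extracted \emph{without a discount}, by playing two different scales $r_1=1/\ep_j<r_2=1/\ep_k$ against each other via the monotonicity in $r$ (Lemma~\ref{lem:IrComparison}) and the quantitative comparison in the right-hand side (Lemma~\ref{lem:RHSComparison}); see Lemma~\ref{lem:UniquenessOfConst}. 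Constructing $\I_\infty$, proving well-posedness of the discounted equation, and showing $\delta w_\delta(0)$ converges would all require new arguments not supplied here.

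Third, the final perturbed test function step (adding $\ep\,w_\delta(x/\ep)$ to $\bar u$ and closing via comparison in $\Sigma^1$) is sketched, not proved, and the Neumann boundary case is where such arguments are most delicate. The paper sidesteps this entirely: once Theorem~\ref{thm:HomogNonlocalBoundaryEq} gives $v^\ep\to c$ uniformly on $\Sigma_0$, it writes $u^\ep$ as the solution $U^\ep$ of the Dirichlet problem \eqref{eqIntro:Dirichlet} with data $v^\ep$ (using that the Neumann condition is attained classically, Lemma~\ref{lem:NeumannVSisClassical}), and invokes stability of the Dirichlet problem under uniform convergence of boundary data to deduce $u^\ep\to l_c(X)=c(1-X\cdot\nu)$. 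No corrector is inserted, and no perturbed test function is needed. If you replace the $\I_\infty$/discount/corrector machinery with the paper's direct finite-$r$ comparison arguments and use the 2D regularity theory in place of nonlocal ellipticity, your plan collapses to the paper's proof; as written it contains genuine unproven steps.
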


\begin{rem}
	In the case that $g:\real^{d+1}\to\real$ is $\integer^{d+1}$-periodic and that $\nu$ is an irrational direction, then $g|_{\Sigma_0}$ will be almost periodic.  We mention the $\nu$ irrational case because in the case that $\nu$ is rational, the original problem reduces to a half-space problem in which $g|_{\Sigma_0}$ is periodic with respect to some square lattice in $\Sigma_0$.  In this case, the result is already covered by \cite{BaDaLiSo-2008ErgodicProbHomogNeumannIUMJ}.  Furthermore, we note in the rational case that the effective Neumann condition will not be invariant by translations of the original domain, $\Sigma^1\mapsto y+\Sigma^1$ for some fixed $y$, whereas in the irrational case it is invariant.
\end{rem}

\begin{rem}
	We emphasize that the result stated in Theorem \ref{thm:HomogMain} is not new, and it appeared in \cite[Theorem 1.2(i)]{ChKiLe-2012HomogNeumann} as well as with a dependence on $F(D^2u^\ep,x/\ep)$ in \cite[Theorem 3.1]{ChoiKim-2013HomogNeumannArXiv}.  Instead we note the method employed here is different in both spirit and details to \cite{ChKiLe-2012HomogNeumann}, and we anticipate it will generalize to other settings both in homogenization as well as possibly other problems.  The case with $F(D^2u^\ep,x/\ep)$ and a more general domain were treated in \cite{ChoiKim-2013HomogNeumannArXiv}.
\end{rem}

Theorem \ref{thm:HomogMain} will be attacked via the Dirichlet-to-Neumann operator, and so we will need an auxiliary equation and some extra notation:

\begin{equation}\label{eqIntro:Dirichlet}
	\displaystyle
	\begin{cases}
 	\displaystyle	F(D^2U) = 0 \ &\text{in}\ \Sigma^1\\
    U =0\ &\text{on}\ \Sigma_1\\
	\displaystyle	U=v\ &\text{on}\ \Sigma_0.
	\end{cases}
\end{equation}
The Dirichlet-to-Neumann operator is then defined as
\begin{equation}\label{eqIntro:DirichletNeumann}
	v\mapsto \I^1(v,x) := \partial_\nu U(x)\ \ \text{such that}\ \ U\ \text{solves (\ref{eqIntro:Dirichlet})}.
\end{equation}

It turns out (and will be explained below in Sections \ref{sec:AssumeInfSup}, \ref{sec:ThmProofs}) that (\ref{eqIntro:MainStripDomain}) imposes a global nonlocal equation in $\Sigma_0$ for the function, $u^\ep|_{\Sigma_0}$, via (\ref{eqIntro:Dirichlet}).  This equation reads

\begin{equation}\label{eqIntro:NonlocalEpsilonEq}
	\I^1(u^\ep|_{\Sigma_0},x) = g(\frac{x}{\ep})\ \ \text{in}\ \ \Sigma_0.
\end{equation}
Equation (\ref{eqIntro:NonlocalEpsilonEq}) is a homogenization problem with a ``uniformly elliptic'' operator, $\I^1$, and an \emph{almost periodic} right hand side, $g$.  Hence we can connect Theorem \ref{thm:HomogMain} with the methods of \cite{Ishii-2000AlmostPeriodicHJHomog} and \cite{Schw-10Per}.
The auxiliary equation (\ref{eqIntro:NonlocalEpsilonEq}) is the heart of our approach-- also a novel feature to the analysis of nonlinear Neumann problems-- and we develop the sufficiency of homogenizing (\ref{eqIntro:NonlocalEpsilonEq}) for homogenizing (\ref{eqIntro:MainStripDomain}) in Section \ref{sec:ProofThmMain}.   The main component of Theorem \ref{thm:HomogMain} is

\begin{thm}\label{thm:HomogNonlocalBoundaryEq}
	There exists a unique constant, $\Bar\I(0)$, such that 
	\[
	\norm{u^\ep|_{\Sigma_0}-\Bar\I(0)}_{L^\infty(\Sigma_0)} \to 0\ \ \text{as}\ \ \ep\to0.
	\]
\end{thm}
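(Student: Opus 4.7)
The plan is to treat the nonlocal boundary equation \eqref{eqIntro:NonlocalEpsilonEq} as a standalone homogenization problem for the uniformly elliptic integro-differential operator $\I^1$ on $\Sigma_0$ driven by the almost periodic forcing $g(\cdot/\ep)$, and to implement a perturbed test function argument of Evans type, adapted to the nonlocal almost periodic setting in the spirit of \cite{Ishii-2000AlmostPeriodicHJHomog} and \cite{Schw-10Per}. The enabling tool is the inf-sup representation of $\I^1$ from Section \ref{sec:PerfectWorld}: it exhibits $\I^1$ as a uniformly elliptic nonlocal operator, so the standard viscosity machinery (H\"older estimates, stability under limits, comparison) is available on $\Sigma_0$.

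First, I would gather uniform in $\ep$ a priori estimates. An $L^\infty$ bound on $u^\ep$, hence on $u^\ep|_{\Sigma_0}$, follows from comparison in \eqref{eqIntro:MainStripDomain} via $\norm{g}_{L^\infty}$. A uniform $C^\al$ bound on $u^\ep|_{\Sigma_0}$ follows from H\"older regularity for uniformly elliptic nonlocal equations applied to \eqref{eqIntro:NonlocalEpsilonEq}, using the ellipticity of $\I^1$ delivered by the inf-sup formula. Arzel\`a--Ascoli then produces a locally uniform subsequential limit $u^\ep|_{\Sigma_0} \to u^0$.

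Second, I would identify every such limit with a single constant $\Bar\I(0)$ through a corrector/ergodic argument. Let $\I^\infty$ denote the half-space Dirichlet-to-Neumann map, formally the $r\to\infty$ limit of the strip operators $\I^r$; at scale $\ep$ the strip of depth $1$ looks like a half-space, so $\I^\infty$ is the correct fast-variable object. For $\del > 0$ introduce the approximate corrector
\begin{equation*}
\del w^\del(y) + \I^\infty(w^\del, y) = g(y), \quad y \in \Sigma_0.
\end{equation*}
Almost periodicity of $g$ (Definition \ref{def:UniformApproxByPeriodic}) together with uniform ellipticity and translation structure of $\I^\infty$ should give $\del w^\del(y) \to -\Bar\I(0)$ uniformly in $y$ as $\del\to 0$, thereby defining the constant $\Bar\I(0)$. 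The functions $w^\del - w^\del(0)$ then play the role of approximate correctors in a perturbed test function of the form $\varphi(x) \pm \ep[w^\del(x/\ep) - w^\del(0)]$; inserted into the viscosity inequalities for \eqref{eqIntro:NonlocalEpsilonEq}, this forces $u^0 \equiv \Bar\I(0)$. Uniqueness of the limit upgrades subsequential to full convergence, and almost periodicity of $g$ promotes local to global uniformity on $\Sigma_0$.

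The main obstacle is the almost periodic corrector step. In the periodic case one solves a cell problem on a torus, but almost periodicity offers no compact quotient; one must instead exploit uniform approximation of $g$ by periodic functions $g_k$, solve cell problems for each $g_k$ to extract constants $\Bar\I_k(0)$, and pass to the limit in $k$ using quantitative ellipticity of $\I^\infty$. A secondary technical point is passing the nonlocal operator $\I^1$ cleanly through the perturbed test function; this is precisely where the inf-sup representation of Section \ref{sec:PerfectWorld} is essential, since it reduces the evaluation to elementary affine functionals in the spirit of the usual Caffarelli--Silvestre treatment of nonlocal viscosity solutions.
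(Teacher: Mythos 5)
Your proposal outlines the classical two-step route --- approximate corrector problem plus perturbed test function --- but there are several places where this would not go through, and it diverges substantially from what the paper actually does.

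The first and most serious gap is your reliance on the inf-sup representation of Section \ref{sec:PerfectWorld} to supply the ``uniform ellipticity'' needed for H\"older estimates and the viscosity machinery on $\Sigma_0$. The paper explicitly identifies this as the open obstruction: Theorem \ref{thm: nonlinear Courrege} gives a representation of $\I^1$ as an inf-sup of L\'evy operators, but the structure of the measures $\mu^{ab}$ (density, symmetry, upper/lower kernel bounds) is unknown, so none of the regularity results in \cite{CaSi-09RegularityIntegroDiff}, \cite{Silv-2006Holder}, etc., can be invoked. The paper does not get H\"older bounds for $v^\ep = u^\ep|_{\Sigma_0}$ from the nonlocal equation at all; it gets them from the \emph{local} Neumann regularity of Milakis--Silvestre (Theorems \ref{thm: MilakisSilvestre}, \ref{thm:NonlocalMilakisSilvestreGlobalHolderAndC1gam}) applied to the original strip problem.

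The second gap is the proposed reduction ``exploit uniform approximation of $g$ by periodic functions $g_k$, solve cell problems for each $g_k$.'' Almost periodic functions (uniform limits of trigonometric polynomials, Definition \ref{def:UniformApproxByPeriodic}) are generically \emph{not} uniform limits of periodic functions: a function such as $\sin x + \sin(\sqrt 2\,x)$ cannot be approximated in $L^\infty(\mathbb R)$ by periodic functions, because rationalizing the frequency $\sqrt 2$ incurs an error that grows linearly in $|x|$. (This is precisely the kind of pitfall the acknowledgments allude to.) So the periodic-approximation strategy has no footing.

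A third issue is the approximate corrector $\delta w^\delta + \I^\infty(w^\delta,\cdot)=g$. Well-posedness for $\I^\infty$ and the claimed uniform convergence $\delta w^\delta\to -\bar\I(0)$ are not established and are not obvious; moreover $\I^\infty$ kills constants (the $-r^{-1}c$ term in Lemma \ref{lem:AlVAladdConstants} vanishes as $r\to\infty$), so the $+\delta w^\delta$ penalization has to carry the full weight of the ergodic averaging without any compactness of the ambient space. None of this is worked out.

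What the paper actually does is more elementary and sidesteps all of the above: it never solves a corrector equation. Instead, the solution itself plays the role of the corrector. Working with $w^\ep(y)=\ep^{-1}v^\ep(\ep y)$, which solves $\I^{1/\ep}(w^\ep,\cdot)=g$, Proposition \ref{prop:AlmostPeriodWEp} (a one-line consequence of the comparison Lemma \ref{lem:RHSComparison}) shows that every $\delta$-almost period of $g$ is a $\delta$-almost period of $\ep w^\ep$. The relative density of almost periods of $g$ (Remark \ref{rem:AlmostPeriodic}) then reduces the global oscillation of $\ep w^\ep$ to its oscillation over a fixed ball $B_{R_\delta}$, which after rescaling is the oscillation of $v^\ep$ over $B_{\ep R_\delta}$, and this vanishes by the uniform (Milakis--Silvestre) H\"older bound. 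That is Lemma \ref{lem:IshiiEpWEpDecay}, the core of the argument. Existence of a subsequential constant limit and its uniqueness (Lemmas \ref{lem:IshiiExistenceOfConst}, \ref{lem:UniquenessOfConst}) follow from standard compactness and a comparison between the strip operators $\I^{1/\ep_j}$, $\I^{1/\ep_k}$ at different depths (Lemma \ref{lem:IrComparison}). So the correct mechanism is: use almost periodicity directly on the solution, plus the purely local regularity theory of the original Neumann problem --- not a corrector scheme, not the inf-sup formula.
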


\begin{rem}
	The choice of notation $\bar \I(0)$ seems strange, but is intentional.  It is meant to indicate that in more general situations, one expects to be required to resolve an effective nonlocal operator in $\Sigma_0$, which we would call $\Bar\I$.  In the very special context of (\ref{eqIntro:MainStripDomain}), it turns out that one only needs to understand $\Bar \I$ for constant functions, i.e. $\bar\I(0)$.  In general one expects $u^\ep|_{\Sigma_0}\to\bar u$ where $\bar u$ is the unique solution of $\Bar \I(\bar u)=0$ in $\Sigma_0$.
\end{rem}

\begin{rem}
	Throughout this note, we use viscosity solutions of equations such as e.g. (\ref{eqIntro:MainStripDomain}), (\ref{eqIntro:Dirichlet}).  We have collected various useful results in Appendix \ref{sec:ExAndUn}.  The existence and uniqueness of viscosity solutions for the equations with which we work can be found in \cite{IshiiLions-1990ViscositySolutions2ndOrder}.  A general introduction and background to viscosity solutions can be found in \cite{CrIsLi-92}.
\end{rem}

Before closing the introduction, let us motivate our approach in the simplest possible setting.  We assume that in (\ref{eqIntro:MainStripDomain}) and (\ref{eqIntro:Dirichlet}) $F=\Delta$ and $v\in C^{1,\gam}(\Sigma_0)$.  For the sake of presentation we ignore the Dirichlet condition on $\Sigma_1$ and assume that the phrase ``$U$ solves (\ref{eqIntro:Dirichlet})'' is interpreted as $U=P*v$, where $P$ is the Poisson kernel for the half-space with boundary $\Sigma_0$.  Then it is well known that the Dirichlet-to-Neumann operator in (\ref{eqIntro:DirichletNeumann}) becomes the $1/2$-Laplacian
\begin{equation*}
	v\mapsto \I(v) = -(-\Delta)^{1/2}v.
\end{equation*}
We note that under the assumptions that $g\in C^{\gam}(\Sigma_0)$, then $u^\ep\in C^{1\gam}(\Bar\Sigma^1)$, and thus if we set $u^\ep|_{\Sigma_0}$ as Dirichlet data in (\ref{eqIntro:Dirichlet}), then $U^\ep\equiv u^\ep$ and in this simple situation, the homogenization can be stated as a global problem on $\Sigma_0$
\begin{equation*}
	-(-\Delta)^{1/2}(u^\ep|_{\Sigma_0})(x)=g(\frac{x}{\ep})\ \ \text{in}\ \Sigma_0.
\end{equation*}
This is, by the almost periodicity assumed on $g$, an almost periodic homogenization problem on $\Sigma_0$.  

In the nonlinear setting, $\I$ will be a nonlinear operator with a comparison principle such that 
\begin{equation*}
	\I(u^\ep|_{\Sigma_0},x)=g(\frac{x}{\ep})
\end{equation*}
admits unique-- and by \cite{MiSi-2006NeumannRegularity}-- classical solutions for each $\ep>0$.  It turns out that this, plus the almost periodicity of $g$ is enough to modify and combine the techniques of \cite{Ishii-2000AlmostPeriodicHJHomog}, respectively \cite{Schw-10Per} for almost periodic Hamilton-Jacobi, respectively periodic integro-differential homogenization to prove the main result.

%%%%%%%%%%%%%%%%%%%%%%%%%%%%%%%%%%%%%%%%%%%%%%
%%%%%%%%%%%%%%%%%%%%%%%%%%%%%%%%%%%%%%%%%%%%%%
%%%%%%%%%%%%%%%%%%%%%%%%%%%%%%%%%%%%%%%%%%%%%%
%%%%%%%%%%%%%%%%%%%%%%%%%%%%%%%%%%%%%%%%%%%%%%
%%%%%%%%%%%%%%%%%%%%%%%%%%%%%%%%%%%%%%%%%%%%%%
\section{Some Background}\label{sec:Background}

Generally speaking, homogenization is the process of studying how oscillations in the coefficients of an equation such as (\ref{eqIntro:MainStripDomain})-- and in many more general situations-- cause effective behavior at a macroscopic scale, which can be thought of as a nonlinear averaging principle. Each type of equation is different, but in this case it is expected that the two phenomena of oscillations introduced at an $\ep$-scale via the Neumann data $g(x/\ep)$ and uniform-in-$\ep$ H\"older estimates arising from uniform ellipticity will combine to enforce an averaged behavior at the macroscopic scale for $\ep<<1$.  For a general background on homogenization, some standard references are: \cite{Babuska-1976HomogenizationInterface1SIAM}, \cite{BeLiPa-78}, \cite{EnSo-08}, \cite{Evan-92PerHomog}, \cite{Jiko-94}.  The study of how the effective-- or ``averaged''-- equation arises \emph{inside} of the domain is well developed by this point, and a good list references including many recent results can be found in \cite{EnSo-08}.

The situation for determining effective behavior arising from oscillations on \emph{the boundary} of the domain is a somewhat different story, and it is less developed than the study of oscillations in the interior. In the divergence setting, when a \emph{co-normal} boundary condition is enforced,
\begin{equation*}
	\left(n(x),A(\frac{x}{\ep})\grad u^\ep(x)\right) = g(\frac{x}{\ep})\ \ \text{on}\ \partial \Om
\end{equation*} 
(if the equation is posed in $\Om$), the situation is well understood thanks to the divergence structure of the boundary condition and can be found in \cite{BeLiPa-78}.  The non-divergence case is much different and less is known.  The first works involved some special assumptions which either directly or indirectly require the boundary of the domain to have a periodicity which is more or less a sub-lattice of the periodic lattice for the bulk equation (in our context, that would mean, e.g. $\nu$ is rational and $g$ is $\integer^{d+1}$ periodic).  Some of these results are in \cite{Arisawa-2003HomogNeumannAIHP}, \cite{BaDaLiSo-2008ErgodicProbHomogNeumannIUMJ}, \cite{Tanaka-1984HomogDiffBoundary}, and they treat cases in which both the equation in the domain as well as on the boundary have oscillatory coefficients.  The approach of \cite{BaDaLiSo-2008ErgodicProbHomogNeumannIUMJ} is to solve corrector equations on both the interior and boundary of the domain.  The difficulty is that the corrector from the interior arises in the expansion on the boundary, and so one basically must resolve a nonlinearly coupled system of corrector equations. 

An important question to address was how to prove homogenization in situations when the boundary does not share any periodicity with the equation in the interior of the domain.  Recent progress was made for situations where some of the periodicity assumptions on the boundary of the domain can be relaxed in \cite{ChKiLe-2012HomogNeumann}, but they still require a translation invariant operator inside the domain, and the domain must be strip-like as is $\Sigma^1$.  The important extension to more general domains with $x/\ep$ dependent coefficients in both $F$ and $g$ was recently obtained in \cite{ChoiKim-2013HomogNeumannArXiv}.  Because of the structure of the equation in those settings, they did not need to resolve a corrector equation on the boundary of the domain.

The question of whether or not Neumann homogenization can be approached via an almost periodic boundary corrector problem utilizing e.g. \cite{Ishii-2000AlmostPeriodicHJHomog} was already raised in \cite[Section 5]{BaDaLiSo-2008ErgodicProbHomogNeumannIUMJ}.  Some (unpublished) progress involving integro-differential equations for the homogenization of the Neumann problem was subsequently made by Lions and Souganidis for some special cases involving a family of linear equations \cite{Soug-2013Personal}.  The approach we develop for Theorems \ref{thm:HomogMain} and Theorem \ref{thm:HomogNonlocalBoundaryEq} lends an answer of how to use almost periodic techniques for the boundary equation, and hence-- we hope-- puts the homogenization of Neumann problems in better alignment with existing techniques.

%%%%%%%%%%%%%%%%%%%%%%%%%%%%%%%%%%%%%%%%%%%%%%
%%%%%%%%%%%%%%%%%%%%%%%%%%%%%%%%%%%%%%%%%%%%%%
%%%%%%%%%%%%%%%%%%%%%%%%%%%%%%%%%%%%%%%%%%%%%%
%%%%%%%%%%%%%%%%%%%%%%%%%%%%%%%%%%%%%%%%%%%%%%
%%%%%%%%%%%%%%%%%%%%%%%%%%%%%%%%%%%%%%%%%%%%%%
\section{The Setup}\label{sec:Setup}

\subsection{Assumptions}

We will make the following assumptions on $F$ and $g$

\begin{itemize}
	\item Uniform Ellipticity: $F$ is uniformly elliptic with respect to the Pucci extremal operators for some $\Lam\geq\lam>0$, i.e. for any $u,v\in C^2$,
\begin{equation}\label{eqSetUp:EllipticF}
	\M^-_{\lam,\Lam}(D^2(u-v)) \leq F(D^2u) - F(D^2v) \leq \M^+_{\lam,\Lam}(D^2(u-v)),
\end{equation}
and we remark that in the case that $F$ is linear, this reduces to the usual assumption of ellipticity.
	\item Pucci's Extremal Operators are defined, using $e_i = e_i(D^2u)$ to represent the eigenvalues of $D^2u$, as
\begin{align}
	\M^-_{\lam,\Lam}(D^2u(x)) := \lam\sum_{e_i\geq 0} e_i + \Lam\sum_{e_i<0} e_i = \inf_{\lam\Id\leq A \leq \Lam\Id}(\Tr(AD^2u(x))),\label{eqSetUp:PucciMin}\\
	\M^+_{\lam,\Lam}(D^2u(x)) := \Lam\sum_{e_i\geq 0} e_i + \lam\sum_{e_i<0} e_i = \sup_{\lam\Id\leq A \leq \Lam\Id}(\Tr(AD^2u(x))).\label{eqSetUp:PucciMax},
\end{align}
and we choose to subsequently drop the subscripts $\lam,\Lam$ for the remainder of the note.

	\item Positive 1-Homogeneity: 
\begin{equation}\label{eqSetUp:1HomF}
	\text{for all}\ \al\geq 0,\ F(\al D^2u)=\al F(D^2u).
\end{equation}

	\item H\"older Continuity: for some $\gam >0$, $g\in C^\gam(\Sigma_0)$.
	
	\item Almost Periodicity: $g$ is almost periodic on $\Sigma_0$.
	
	\item Notation: we will use the notation that 
		\begin{itemize}
			\item $X\in\real^{d+1}$ is written as
\[
X=(x,x_{d+1}),\ \ \text{for}\ x\in\Sigma_0\ \text{and}\ x_{d+1}\in span(\nu).
\]
			\item the space $C^2(\Om)$ is the functions, $f$, on $\Om$ with finite $\norm{f}_{L^\infty}$, $\norm{Df}_{L^\infty}$, and $\norm{D^2f}_{L^\infty}$. 
		\end{itemize}
		
		\item $\min(x,y)=x\wedge y$

\end{itemize}

We will work with almost periodic functions.

\begin{DEF}\label{def:UniformApproxByPeriodic}
 $f:\Sigma_0\to\real$ is \emph{almost periodic} if it can be uniformly approximated on $\Sigma_0$ by trigonometric polynomials.
\end{DEF}

\begin{rem}\cite[Proposition 1.2]{Shubin-1978AlmostPeriodic}\label{rem:AlmostPeriodic}
	Two other equivalent and classical formulations are that $f:\Sigma_0\to\real$ is \emph{almost periodic} if 
	
	\begin{itemize}
		\item[(i)] The set
		\begin{equation*}
			\left\{ f(\cdot+z)\ :\ z\in\Sigma_0   \right\}
		\end{equation*}
		is precompact in the space $L^\infty(\Sigma_0)$.
	\end{itemize}
	and
	\begin{itemize}
		\item[(ii)] For any $\delta>0$, the set of $\del$-almost periods of $f$, 
		\[
		E_\delta := \{\tau\in\Sigma_0\ :\ \sup_{x\in\Sigma_0}\abs{f(x+\tau)-f(x)}< \del\},
		\]
		satisfies the following property: there exists a compact set, $K\subset \Sigma_0$, such that 
		\[
		(z+K)\intersect E_\delta \not= \emptyset\ \textnormal{for all}\ z\in\Sigma_0.
		\]
	\end{itemize}

\end{rem}

\subsection{Preliminary Results}

We are working with viscosity solutions of equations such as (\ref{eqIntro:MainStripDomain}), (\ref{eqIntro:DirichletNeumann}), (\ref{eqSetUp:DirichletrScale}), and so we will collect various standard and well known facts about the existence and uniqueness of weak solutions in Appendix \ref{sec:ExAndUn}.  Since all of the equations we use here have unique viscosity solutions, we will keep a blanket reference to Appendix \ref{sec:ExAndUn} for these types of questions for the rest of the note, and we freely use ``viscosity solution'' interchangeably with ``solution''.

We define the Dirichlet to Neumann operators for $F$ in $\Sigma_0$, $\I^{r}: C^{1,\gam} \to C^0(\Sigma_0)$ by
\begin{equation}\label{eqSetUp:IrConstruction}
	\I^{r}(\phi,x) := \partial_\nu U^r_\phi(x),\;\;\phi\in C^{1,\gam}(\Sigma_0)
\end{equation}
where $U^r_\phi = U^r$ is the unique viscosity solution of

\begin{equation}\label{eqSetUp:DirichletrScale}
	\left \{ \begin{array}{rll}
		F(D^2U^r) & = 0 \ &\text{in}\ \Sigma^{r},\\
		U^r & = 0\ &\text{on}\ \Sigma_{r},\\	
		U^r & = \phi\ &\text{on}\ \Sigma_0.		
	\end{array}\right.
\end{equation}

    As we shall see below, the Dirichlet to Neumann maps for the standard extremal operators $\mathcal{M}^{\pm}$ will be of use. They are defined as follows, given $\phi:\Sigma_0 \to \mathbb{R}$,  define,
   \begin{equation}\label{eqSetUp:NonlocalExtremalDef}
    	 M^{r,\pm}(\phi,y) : = \partial_\nu U_\phi^{r,\pm},
    \end{equation}
    where $U^{r,\pm}_\phi = U^{r,\pm}: \Sigma^r \to \mathbb{R}$ is the unique viscosity solution of
	\begin{equation}\label{eqSetUp:ExtremalDirichletrScale}
		\left \{ \begin{array}{rll}
			\mathcal{M}^{\pm}(D^2U^{r,\pm}) & = 0 \ &\text{in}\ \Sigma^{r},\\
			U^{r,\pm} & = 0\ &\text{on}\ \Sigma_{r},\\	
			U^{r,\pm} & = \phi\ &\text{on}\ \Sigma_0.		
		\end{array}\right.
	\end{equation}

\begin{lem}\label{lemSetup:DtoNExtremals}
	Given $u,v \in C^{1,\gam}(\Sigma_0)$, $\I^r$ in (\ref{eqSetUp:IrConstruction}), and $M^{r,\pm}$ defined by (\ref{eqSetUp:NonlocalExtremalDef}), we have the pointwise inequalities for all $y\in\Sigma_0$
	\begin{equation*}
		M^{r,-}(u-v,y) \leq \I^r(u,y)-\I^r(v,y) \leq M^{r,+}(u-v,y) .
	\end{equation*}
\end{lem}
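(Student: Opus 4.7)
The plan is to reduce the inequality to two applications of the comparison principle between a Dirichlet problem for $F$ and the corresponding Dirichlet problems for the Pucci extremals. Let $U^r_u$ and $U^r_v$ denote the solutions of (\ref{eqSetUp:DirichletrScale}) with boundary data $u$ and $v$ on $\Sigma_0$, and set $W \defeq U^r_u - U^r_v$. By construction $W = u-v$ on $\Sigma_0$ and $W = 0$ on $\Sigma_r$. Applying the uniform ellipticity assumption (\ref{eqSetUp:EllipticF}) to $U^r_u$ and $U^r_v$, and using that both are $F$-harmonic, I get the pointwise (viscosity) inequalities
\begin{equation*}
\M^-(D^2 W) \leq 0 \leq \M^+(D^2 W) \qquad \text{in } \Sigma^r.
\end{equation*}
Hence $W$ is simultaneously a viscosity subsolution of $\M^+(D^2 w)=0$ and a viscosity supersolution of $\M^-(D^2 w)=0$ in $\Sigma^r$.

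Next I would compare $W$ with the extremal Dirichlet extensions $U^{r,\pm}_{u-v}$ defined by (\ref{eqSetUp:ExtremalDirichletrScale}), which share the boundary data of $W$ on $\partial \Sigma^r$. The comparison principle for $\M^\pm$ (recorded in Appendix \ref{sec:ExAndUn}) yields
\begin{equation*}
U^{r,-}_{u-v} \leq W \leq U^{r,+}_{u-v} \qquad \text{in } \Sigma^r,
\end{equation*}
with equality on $\Sigma_0$ and on $\Sigma_r$. Since $u-v \in C^{1,\gam}(\Sigma_0)$, the boundary regularity results of Milakis-Silvestre invoked in the introduction guarantee that $W$ and both $U^{r,\pm}_{u-v}$ are $C^{1,\gam}$ up to $\Sigma_0$, so their normal derivatives along $\nu$ are classically defined at every $y \in \Sigma_0$.

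To finish, I would exploit that $\nu$ points into the domain from $\Sigma_0$. The function $W - U^{r,+}_{u-v}$ vanishes on $\Sigma_0$ and is nonpositive in $\Sigma^r$, so differentiating along $\nu$ at a boundary point gives $\partial_\nu(W - U^{r,+}_{u-v})(y) \leq 0$, i.e.\ $\I^r(u,y) - \I^r(v,y) \leq M^{r,+}(u-v,y)$. The symmetric argument applied to $W - U^{r,-}_{u-v}$, which vanishes on $\Sigma_0$ and is nonnegative in $\Sigma^r$, yields the lower bound $M^{r,-}(u-v,y) \leq \I^r(u,y) - \I^r(v,y)$.

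The only potentially delicate step is the justification that the $C^{1,\gam}$ boundary regularity required to differentiate along $\nu$ at $\Sigma_0$ holds for all three functions; everything else is a standard ``boundary derivative from one-sided comparison'' maneuver. Once that regularity is invoked, the proof is just ellipticity plus comparison, and it is precisely the observation that makes the inf-sup/nonlocal representation of $\I^r$ in subsequent sections meaningful.
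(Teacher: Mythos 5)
Your argument is correct and matches the paper's proof: form $W = U^r_u - U^r_v$, use uniform ellipticity to see that $W$ is a sub/supersolution for $\M^\pm$, compare with $U^{r,\pm}_{u-v}$, and order the normal derivatives using agreement on $\Sigma_0$ and the $C^{1,\tilde\gam}$ boundary regularity from Milakis--Silvestre. The paper's proof is essentially the same, also invoking Theorem \ref{thm:MilakisSilvestreDirichletRegularity} to justify that $M^{r,\pm}$ are classically defined.
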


\begin{proof}
	We first remark that by Theorem \ref{thm:MilakisSilvestreDirichletRegularity}, the operators $M^{r,\pm}$ are classically defined for $u$ and $v$.
	Let us prove the upper bound. By definition $F(D^2U^r_u) = F(D^2U^r_v) = 0$ in $\Sigma^r$, and therefore the function $U := U^r_u-U^r_v$ solves-- via (\ref{eqSetUp:EllipticF})--
    \begin{equation*}
   	 \left \{ \begin{array}{rll}
   	 	  \mathcal{M}^+(D^2U) & \geq 0 & \text{ in } \Sigma^r,\\
                  U & = 0 & \text{ on } \Sigma_r,\\
				  U & = u-v & \text{ on } \Sigma_0.
   	 \end{array}\right.    	
    \end{equation*}
    Then, let $W = U^{r,+}_{u-v}$ be the unique solution of (\ref{eqSetUp:ExtremalDirichletrScale}).
    Thus, by the comparison principle, $U\leq W$ in $\Sigma^r$. Since $U\equiv W$ on $\Sigma_0$, it follows that
    \begin{equation*}
    	 \partial_\nu U \leq \partial_\nu W \text{ in } \Sigma_0.
    \end{equation*}
    But by construction, $\partial_\nu U = \I^r(u,\cdot)-\I^r(v,\cdot)$ and $\partial_\nu W = M^{r,+}(u-v,\cdot)$ and the first pointwise bound follows. The lower bound is proved by another comparison argument, using $\mathcal{M}^-$ instead of $\mathcal{M}^+$. This proves the lemma.
\end{proof}

\begin{lem}\label{lem:Iis1Homogeneous}
	$\I^r$ is positively $1$-homogeneous for all $r$, i.e. for all $\phi\in C^{1,\gam}$ and $c>0$
	\[
	\I^r(c\phi,y) = c\I^r(\phi,y).
	\]
\end{lem}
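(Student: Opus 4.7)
The plan is to reduce the claim to the uniqueness of the Dirichlet problem (\ref{eqSetUp:DirichletrScale}) combined with the positive 1-homogeneity of $F$. Specifically, I will show that scaling the boundary data by $c>0$ scales the solution of the Dirichlet problem by the same factor, and then read off the conclusion by taking normal derivatives.

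More concretely, first I fix $\phi \in C^{1,\gamma}(\Sigma_0)$ and $c > 0$, and let $U = U^r_\phi$ be the solution of (\ref{eqSetUp:DirichletrScale}) with boundary data $\phi$. I then define $V := c\,U$ on $\overline{\Sigma^r}$ and check that $V$ is a viscosity solution of (\ref{eqSetUp:DirichletrScale}) with boundary data $c\phi$. Since $c > 0$, the positive 1-homogeneity assumption (\ref{eqSetUp:1HomF}) gives $F(D^2V) = F(c\,D^2U) = c\,F(D^2U) = 0$ in $\Sigma^r$ in the viscosity sense (homogeneous scaling preserves viscosity sub/supersolution tests because test functions can likewise be rescaled by $c$). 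On the boundary, $V = c\phi$ on $\Sigma_0$ and $V = 0$ on $\Sigma_r$, which are the correct data. By the uniqueness of solutions to (\ref{eqSetUp:DirichletrScale}) (as collected in Appendix \ref{sec:ExAndUn}), we conclude $U^r_{c\phi} = c\,U^r_\phi$ pointwise on $\overline{\Sigma^r}$.

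Having established this identity, I will take the normal derivative on $\Sigma_0$. Since $\phi \in C^{1,\gamma}(\Sigma_0)$, the normal derivative exists (in the classical sense, by Theorem \ref{thm:MilakisSilvestreDirichletRegularity}) at every $y \in \Sigma_0$, and linearity of differentiation yields
\[
\I^r(c\phi, y) = \partial_\nu U^r_{c\phi}(y) = \partial_\nu (c\,U^r_\phi)(y) = c\,\partial_\nu U^r_\phi(y) = c\,\I^r(\phi, y),
\]
which is precisely the claim.

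There is no substantial obstacle here; the only subtlety is ensuring that the rescaled function $V = cU$ is indeed a viscosity solution of the rescaled equation, but this is a routine consequence of the positive 1-homogeneity of $F$ and the fact that test functions may themselves be rescaled by $c>0$ without affecting the ordering relations used in the viscosity definition. The argument does not use anything about $\phi$ beyond enough regularity to make $\partial_\nu U^r_\phi$ well-defined pointwise.
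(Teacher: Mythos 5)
Your proof is correct and takes essentially the same route as the paper: both deduce $U^r_{c\phi} = c\,U^r_\phi$ from the positive $1$-homogeneity of $F$ together with uniqueness for the Dirichlet problem \eqref{eqSetUp:DirichletrScale}, and then take normal derivatives on $\Sigma_0$. Your extra remarks on rescaling viscosity test functions and invoking Theorem~\ref{thm:MilakisSilvestreDirichletRegularity} for classical attainment of the normal derivative simply make explicit points the paper leaves implicit.
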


\begin{proof}
	This is an immediate consequence of the 1-homogeneity of $F$ (\ref{eqSetUp:1HomF}) combined with the uniqueness for (\ref{eqSetUp:DirichletrScale}).  Indeed, if we replace $\phi$ by $c\phi$ -- for $c>0$-- in (\ref{eqSetUp:DirichletrScale}), then we see that the new function $cU^r$ solves the same equation with Dirichlet data $c\phi$ on $\Sigma_0$.  Hence 
	\[
	\partial_\nu U^r_{c\phi} = \partial_\nu cU^r_\phi,
	\]
	which gives
	\[
	\I^r(c\phi,y) = c\I^r(\phi,y).
	\]
\end{proof}

\begin{lem}\label{lem:TranslationInvariant}
	$\I^r$ is translation invariant. Namely, given any smooth $\phi$,
    \begin{equation*}
         \I^{r}(\phi,x+y) = \I^{r}(\tau_y \phi,x),\;\;\forall\;x,y\in\Sigma_0.
    \end{equation*}
    Here, $\tau_y$ denotes the shift operator by $y \in \Sigma_0$,
    \begin{equation*}
         \tau_y \phi (x) := \phi(x+y).
    \end{equation*}
\end{lem}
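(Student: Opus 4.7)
The plan is to exploit three facts: the operator $F$ depends only on $D^2 U$ and so is invariant under arbitrary translations; the strip $\Sigma^r$ together with its two boundary components $\Sigma_0$ and $\Sigma_r$ is invariant under any shift by a vector $y\in\Sigma_0$ (since such $y$ satisfies $y\cdot\nu=0$); and the Dirichlet problem \eqref{eqSetUp:DirichletrScale} admits a unique viscosity solution (Appendix \ref{sec:ExAndUn}).

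Given $y\in\Sigma_0$, the first step is to set $V:\Sigma^r\to\real$ by $V(X):=U^r_\phi(X+y)$ and observe that $X\mapsto X+y$ maps $\Sigma^r$, $\Sigma_0$, and $\Sigma_r$ to themselves. Since $D^2 V(X)=D^2U^r_\phi(X+y)$, we get $F(D^2V)=0$ in $\Sigma^r$, while the boundary values transfer as $V\equiv 0$ on $\Sigma_r$ and $V(X)=\phi(X+y)=(\tau_y\phi)(X)$ on $\Sigma_0$. Thus $V$ solves \eqref{eqSetUp:DirichletrScale} with Dirichlet data $\tau_y\phi$, and by uniqueness $V\equiv U^r_{\tau_y\phi}$ in $\Sigma^r$.

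The second step is to take normal derivatives at $x\in\Sigma_0$. Since $\nu$ is orthogonal to the horizontal shift vector $y$, the differential operator $\partial_\nu$ commutes with the shift $X\mapsto X+y$, so
\[
\I^r(\tau_y\phi,x)=\partial_\nu U^r_{\tau_y\phi}(x)=\partial_\nu V(x)=\partial_\nu U^r_\phi(x+y)=\I^r(\phi,x+y),
\]
which is the claimed identity. There is essentially no obstacle here; the only subtlety to record is that $\partial_\nu U^r_{\tau_y\phi}$ must be classically defined, which follows from the Milakis--Silvestre boundary regularity already invoked in Lemma \ref{lemSetup:DtoNExtremals} together with the fact that $\tau_y\phi\in C^{1,\gam}(\Sigma_0)$ with the same norm as $\phi$.
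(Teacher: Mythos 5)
Your proof is correct and follows essentially the same approach as the paper's: define the shifted function, use translation invariance of $F$ and invariance of the strip under horizontal shifts, invoke uniqueness for the Dirichlet problem to identify it with $U^r_{\tau_y\phi}$, and take normal derivatives. The extra remark about classical attainment of the normal derivative is a reasonable detail the paper's sketch omits.
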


\begin{proof}
    As the proof goes by a standard argument, we only give a sketch. It relies on the uniqueness of solutions for the Dirichlet problem \eqref{eqSetUp:DirichletrScale} and the fact that the operation $U \to F(D^2U)$ commutes with translations, and particularly, translations which are parallel to $\Sigma_0$. Therefore, the function $U(\cdot+y)$ solves the same Dirichlet problem as $U^r_{\tau_y u}$, thus by uniqueness $U^r_{\tau_y u} = U(\cdot+y)$ in $\Sigma^r$. Taking their normal derivatives on $\Sigma_0$, the lemma follows.
    
\end{proof}

\begin{lem}\label{lem:AlVAladdConstants}
	Let $r$ be fixed, $c$ a constant, and $\phi\in C^{1,\gam}(\Sigma_0)$, then
	\begin{equation*}
		\I^{r}(\phi+c,y)=\I^{r}(\phi,y)- r^{-1}c.
	\end{equation*}
\end{lem}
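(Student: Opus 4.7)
The plan is to construct $U^r_{\phi+c}$ explicitly from $U^r_\phi$ by adding an affine correction in the normal direction, and then read off the effect on the normal derivative at $\Sigma_0$. Concretely, I would define
\[
V(X) := U^r_\phi(X) + c\left(1 - \tfrac{X\cdot\nu}{r}\right), \qquad X\in\Sigma^r,
\]
and argue that $V = U^r_{\phi+c}$.

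First I would check the boundary values: on $\Sigma_0$ one has $X\cdot\nu=0$, so $V = \phi + c$; on $\Sigma_r$ one has $X\cdot\nu=r$, so $V = 0$. Next I would check the PDE. Since the correction $c(1 - (X\cdot\nu)/r)$ is affine in $X$, its Hessian vanishes, and hence $D^2 V = D^2 U^r_\phi$ in $\Sigma^r$. Because $F$ depends only on the Hessian, $F(D^2 V) = F(D^2 U^r_\phi) = 0$ in the viscosity sense. Thus $V$ solves the Dirichlet problem (\ref{eqSetUp:DirichletrScale}) with data $\phi + c$, and by uniqueness (Appendix \ref{sec:ExAndUn}) we conclude $V = U^r_{\phi + c}$.

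Finally, taking the normal derivative on $\Sigma_0$, I would compute
\[
\partial_\nu V(y) = \partial_\nu U^r_\phi(y) + c\,\partial_\nu\!\left(1 - \tfrac{X\cdot\nu}{r}\right) = \partial_\nu U^r_\phi(y) - \tfrac{c}{r},
\]
which, by the definition (\ref{eqSetUp:IrConstruction}), is precisely $\I^r(\phi,y) - r^{-1}c$, proving the claim.

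There is no real obstacle here; the identity is essentially the statement that $F$ annihilates affine functions, together with the boundary values of the specific affine profile $c(1 - (X\cdot\nu)/r)$ being tuned to match $c$ on $\Sigma_0$ and $0$ on $\Sigma_r$. The only point to be careful about is that the argument uses uniqueness for (\ref{eqSetUp:DirichletrScale}) rather than a direct computation with derivatives of $U^r_\phi$, since $U^r_\phi$ itself is only a viscosity solution; this is already handled by the standing reference to Appendix \ref{sec:ExAndUn}.
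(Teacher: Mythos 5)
Your proof is correct and follows essentially the same route as the paper's: both construct the solution with shifted Dirichlet data by adding the affine profile $c(1-X\cdot\nu/r)$ (the paper writes this as $c - cr^{-1}x_{d+1}$), invoke uniqueness of the Dirichlet problem, and read off the normal derivative. You merely spell out the boundary-value and Hessian checks a bit more explicitly.
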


\begin{proof}
     Let $U^r_v: \Sigma^{r}\to\mathbb{R}$ be given by \eqref{eqSetUp:DirichletrScale} with $U^r_\phi = \phi$ on $\Sigma_0$. Now, the function
     \begin{equation*}
     	 \tilde U^r := U^r+c-cr^{-1} x_{d+1}
     \end{equation*}	
     Solves \eqref{eqSetUp:DirichletrScale} but with $\tilde U^r=\phi+c$ on $\Sigma_0$. Accordingly,
     \begin{equation*}
     	 \partial_\nu U^r = \I^{r}(\phi,y)\textnormal{ and } \partial_\nu \tilde U^r = \I^{r}(\phi+c,y).
     \end{equation*}
     It follows then that $\partial_\nu \tilde U^r = \partial_\nu U^r-r^{-1}c$, and the lemma is proved.
\end{proof}

\begin{lem}\label{lem:Scaling}
	If $\phi$ solves
	\begin{equation*}
		\I^1(\phi,x) = g(\frac{x}{\ep})\ \ \text{in}\ \Sigma_0 
	\end{equation*}
	and $w(y)= \ep^{-1} \phi(\ep y)$, then $w$ solves
	\begin{equation*}
		\I^{1/\ep}(w,y) = g(y).
	\end{equation*}
\end{lem}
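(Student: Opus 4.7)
The plan is to unwind the definitions of $\I^1$ and $\I^{1/\ep}$ through the associated Dirichlet problems and check that a single rescaling transports one into the other. The only non-trivial ingredient is the positive $1$-homogeneity of $F$ (\ref{eqSetUp:1HomF}), which is precisely what compensates for the mismatch in scaling between $D^2$ (which scales quadratically) and the prefactor $\ep^{-1}$ in the definition of $w$.

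First I would let $U^1 = U^1_\phi$ denote the solution of (\ref{eqSetUp:DirichletrScale}) with $r=1$ and boundary data $\phi$ on $\Sigma_0$, so that by definition $\partial_\nu U^1(x) = \I^1(\phi,x) = g(x/\ep)$. Then I introduce the rescaled function
\begin{equation*}
V(Y) := \ep^{-1} U^1(\ep Y), \qquad Y \in \Sigma^{1/\ep},
\end{equation*}
and check that $V$ is the unique solution of (\ref{eqSetUp:DirichletrScale}) with $r = 1/\ep$ and Dirichlet data $w$ on $\Sigma_0$. Three verifications are needed: (i) the domain matches, since $Y\cdot\nu \in (0,1/\ep)$ iff $\ep Y \cdot\nu \in (0,1)$; (ii) the boundary conditions match, namely $V(y) = \ep^{-1}\phi(\ep y) = w(y)$ on $\Sigma_0$ and $V \equiv 0$ on $\Sigma_{1/\ep}$; (iii) the equation holds, which is where $1$-homogeneity enters: from $D^2 V(Y) = \ep\, D^2 U^1(\ep Y)$ and (\ref{eqSetUp:1HomF}) one obtains
\begin{equation*}
F(D^2 V(Y)) = F\bigl(\ep\, D^2 U^1(\ep Y)\bigr) = \ep\, F(D^2 U^1(\ep Y)) = 0.
\end{equation*}

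By the uniqueness of viscosity solutions for (\ref{eqSetUp:DirichletrScale}) (Appendix \ref{sec:ExAndUn}), this identifies $V$ with $U^{1/\ep}_w$. Taking the normal derivative on $\Sigma_0$ and using the chain rule gives
\begin{equation*}
\I^{1/\ep}(w,y) = \partial_\nu V(y) = \ep^{-1}\cdot \ep\, \partial_\nu U^1(\ep y) = g(\ep y/\ep) = g(y),
\end{equation*}
which is the desired identity. There is no genuine obstacle here; the mild point to be careful about is bookkeeping the two powers of $\ep$ (one from the $\ep^{-1}$ prefactor in $w$ and one from the chain rule on $\ep Y$) so that $\partial_\nu V$ agrees with $\partial_\nu U^1$ evaluated at the rescaled point, and invoking $1$-homogeneity rather than linearity of $F$ to kill the remaining $\ep$ inside $F$.
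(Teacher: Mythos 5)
Your proof is correct and follows essentially the same route as the paper: rescale the extension $U^1_\phi$ by $V(Y)=\ep^{-1}U^1_\phi(\ep Y)$, use positive $1$-homogeneity of $F$ to see $V$ solves the Dirichlet problem in $\Sigma^{1/\ep}$ with data $w$, invoke uniqueness, and take the normal derivative. The only cosmetic difference is that you spell out the three verifications (domain, boundary data, equation) and the chain-rule bookkeeping more explicitly than the paper does.
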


\begin{proof}[Proof of Lemma \ref{lem:Scaling}]
	We let $U^1_v$ and $U^{1/\ep}_w$ be the solutions to (\ref{eqSetUp:DirichletrScale}) with data given by respectively $v$, $w$.  We define $W$ as
	\[
	W(y) = \ep^{-1} U^1_v(\ep y).
	\]
	The homogeneity of $F$-- (\ref{eqSetUp:1HomF})-- ensures that $W$ is in fact a solution of (\ref{eqSetUp:DirichletrScale}) in $\Sigma^{1/\ep}$ with data on $\Sigma_0$ given by $w$.  Hence by uniqueness of viscosity solutions of (\ref{eqSetUp:DirichletrScale}), we conclude $W\equiv U^{1/\ep}_w$.  Thus we have
	\[
	\I^{1/\ep}(w,y) = \partial_\nu U^{1/\ep}_w(y) = \partial_\nu W(y) = \partial_\nu U^1_v (\ep y) = \I^1(v,\ep y) = g(y).
	\]
\end{proof}

    The following auxiliary functions will be useful for localizing points of maxima and minima.  Let
    \begin{equation*}
         \phi_1(x):=\frac{|x|^2}{1+|x|^2},
    \end{equation*}
    and for $R>0$ we will consider the functions
    \begin{equation}\label{eqSetUp:Phi_Psi_def}
         \phi_R(x) := \phi(x/R).%,\;\;\psi_R(x) := \phi(x/R)-1.
    \end{equation}

    \begin{prop}\label{prop:AuxiliaryFunctionsAreNice}
        There is a positive constant $C=C(\lambda,\Lambda,d)$ such that
        \begin{equation*}
              \|M^{r,\pm}(\phi_R,\cdot)\|_{L^\infty(\Sigma_0)}\leq CR^{-1},\;\;\forall\;R,r \geq 1/2. 
        \end{equation*}
    \end{prop}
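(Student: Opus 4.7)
The plan is to combine a rescaling argument with uniform boundary regularity for the extremal Dirichlet problem \eqref{eqSetUp:ExtremalDirichletrScale}. Since $\phi_R(x) = \phi_1(x/R)$ is a dilation of $\phi_1$ by factor $R$, and since $\mathcal{M}^\pm$ are $1$-homogeneous, the argument of Lemma \ref{lem:Scaling} applies verbatim with $\mathcal{M}^\pm$ in place of $F$. First I would set $V(Y) := R^{-1} U^{r,\pm}_{\phi_R}(RY)$ and verify by a direct computation that $\mathcal{M}^\pm(D^2_Y V) = 0$ in $\Sigma^{r/R}$, with $V(y,0) = R^{-1}\phi_1(y)$ and $V(y,r/R) = 0$. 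Uniqueness for \eqref{eqSetUp:ExtremalDirichletrScale} together with positive $1$-homogeneity in the boundary data gives $V = R^{-1} U^{r/R,\pm}_{\phi_1}$, from which taking the normal derivative at $\Sigma_0$ yields the scaling identity
\[
M^{r,\pm}(\phi_R, R y) \;=\; R^{-1}\, M^{r/R,\pm}(\phi_1, y),
\]
so that $\|M^{r,\pm}(\phi_R,\cdot)\|_{L^\infty(\Sigma_0)} = R^{-1}\|M^{r/R,\pm}(\phi_1,\cdot)\|_{L^\infty(\Sigma_0)}$.

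This reduces the proposition to the uniform estimate $\|M^{r',\pm}(\phi_1,\cdot)\|_{L^\infty(\Sigma_0)} \leq C(\lambda,\Lambda,d)$ in the relevant range of $r'$. The comparison principle immediately gives the universal bound $\|U^{r',\pm}_{\phi_1}\|_{L^\infty} \leq \|\phi_1\|_{L^\infty} \leq 1$ (by comparing with the solution whose boundary data is $1$ on $\Sigma_0$ and $0$ on $\Sigma_{r'}$). Next, I would apply the local $C^{1,\gamma}$ boundary regularity result of Milakis--Silvestre (Theorem \ref{thm:MilakisSilvestreDirichletRegularity}) at each point $y_0 \in \Sigma_0$ inside a ball of fixed radius: this gives a $C^{1,\gamma}$ estimate for $U^{r',\pm}_{\phi_1}$ up to $\Sigma_0$ whose constant depends only on the $C^{1,\gamma}$ norm of $\phi_1$ (which is a fixed, universal number), the sup-norm bound from the previous step, and $\lambda,\Lambda,d$. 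This translates directly into the desired pointwise bound on $\partial_\nu U^{r',\pm}_{\phi_1}$ along $\Sigma_0$.

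The main obstacle is making the regularity constant genuinely uniform in $r'$, since after scaling $r'$ may become small. To handle this I would supplement the local regularity argument with explicit sub- and super-solution barriers of the form $W^{\pm}(X) = \phi_1(x) \pm K\, x_{d+1}$, augmented by a quadratic correction in $x_{d+1}$ when necessary, with $K$ chosen in terms of $\|\phi_1\|_{C^2}$, $\lambda$, and $\Lambda$ so that $\mathcal{M}^\pm(D^2 W^\pm)$ has the correct sign to serve as a super- or subsolution. Comparison sandwiches $U^{r',\pm}_{\phi_1}$ between $W^-$ and $W^+$ near $\Sigma_0$; since $W^\pm$ agree with $U^{r',\pm}_{\phi_1}$ on $\Sigma_0$, differentiating in the normal direction at $x_{d+1}=0$ yields $|\partial_\nu U^{r',\pm}_{\phi_1}| \leq K$, a bound depending only on $\lambda,\Lambda,d$. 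Combining this uniform estimate with the scaling identity completes the proof.
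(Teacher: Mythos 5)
Your scaling identity $\|M^{r,\pm}(\phi_R,\cdot)\|_{L^\infty} = R^{-1}\|M^{r/R,\pm}(\phi_1,\cdot)\|_{L^\infty}$ is correct, and reducing to a uniform-in-$r'$ estimate for the fixed profile $\phi_1$ is a reasonable normalization. The problem is the uniform estimate itself. The Milakis--Silvestre boundary regularity (Theorem \ref{thm:MilakisSilvestreDirichletRegularity}) needs a half-ball of fixed radius inside the strip, so it yields nothing once $r' := r/R < 1/2$, which is exactly the regime the rescaling creates whenever $R > 2r$. The barrier is the right remedy --- indeed it is all the paper uses --- but to get a constant $K$ independent of $r'$ you must carry out the comparison in the \emph{truncated} strip $\Sigma^{r'\wedge 1}$, using $U \leq 1$ on $\Sigma_1$ (when $r' > 1$) rather than $U = 0$ on $\Sigma_{r'}$. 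Without this truncation the quadratic correction in $x_{d+1}$ (which you correctly note is needed to make $\mathcal{M}^+(D^2 W^+)\leq 0$) eventually drives $W^+$ negative as $x_{d+1}$ grows, and the requirement $W^+\geq U$ on $\Sigma_{r'}$ fails for $r'$ large. This truncation is precisely the device the paper employs, working directly --- with no scaling --- in $\Sigma^{r\wedge R}$ with the barrier $Q(X) = \phi_R(x) + AR^{-1}x_{d+1}(2 - R^{-1}x_{d+1})$, and it is the key step missing from your proposal.

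There is a second, subtler point. Your sandwich from below cannot give $K$ independent of $r'$: a subsolution $W^-$ agreeing with $\phi_1$ on $\Sigma_0$ must sit below $U = 0$ on $\Sigma_{r'}$, forcing it to drop from $\approx 1$ (at far-away $x$, where $\phi_1 \to 1$) to $\leq 0$ over width $r'$, hence $K \gtrsim 1/r'$. Undoing the scaling, $M^{r,\pm}(\phi_R,x)\approx -1/r$ at such $x$ when $r<R$, which is not $O(R^{-1})$; the two-sided bound in the statement appears to be too strong in that regime, and the paper's one-line remark that the ``respective lower bound'' follows analogously does not obviously go through either. This is not fatal, because only the one-sided inequality $M^{r,+}(\phi_R,\cdot)\leq CR^{-1}$ is actually used downstream, in Lemma \ref{lem:ClassicalComparisonForIOperator}. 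If you restrict your barrier argument to that one-sided estimate and add the truncation to $\Sigma^{r'\wedge 1}$, your rescaled approach becomes essentially the paper's proof in normalized coordinates.
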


\begin{proof}
    We will only treat the case of an upper bound for $M^{r,+}$, and the other cases follow analogously.   Consider $U=U^{r,+}_{\phi_R}$ defined in \eqref{eqSetUp:ExtremalDirichletrScale}, and the function ($A$ to be specified)
    \begin{equation*}
         Q(X) := \phi_R(x)+AR^{-1} x_{d+1}(2-R^{-1}x_{d+1}),\;\;X=(x,x_{d+1}) \in \Sigma^{r\wedge R}.
    \end{equation*}
    For brevity, we are using $r\wedge R$ to denote $\min\{r,R\}$. Note that, if $x_{d+1}= r\wedge R$ then
    \begin{equation*}
         AR^{-1}x_{d+1}(2-R^{-1}x_{d+1}) = AR^{-1} (r\wedge R)  (2- (R^{-1} r)\wedge 1),
    \end{equation*}
    which equals $A$ if $r\wedge R = R$ and remains non-negative otherwise. Since $U\leq 1$  everywhere in $\Sigma^{r\wedge R}$ and $U=0$ on $\Sigma_{r\wedge R}$ when $r\wedge R = r$, it follows that when $A\geq 1$ we have
    \begin{equation*} 
         Q(X) \geq U(X) \text{ on } \Sigma_{r\wedge R}.
    \end{equation*}
    Moreover, $Q=U$ on $\Sigma_0$ (by construction), thus $Q\geq U$ on $\partial \Sigma^{r\wedge R}$. On the other hand, 
    \begin{equation*}
        D^2Q(X) = \left ( \begin{array}{cc}
           R^{-2}(D^2 \phi_1)(x/R) & 0\\
           0 & -2R^{-2}A
        \end{array} \right ),
    \end{equation*}
    which yields
    \begin{equation*}
       \mathcal{M}^+(D^2Q) \leq R^{-2}\left ( \Lambda d \|D^2\phi_1\|_{L^\infty(\Sigma_0)}-2 \lambda A\right )
    \end{equation*}
    Therefore, taking $A := \max\{1, d\Lambda \lambda^{-1} \|D^2\phi_1\|_{L^\infty(\Sigma_0)}\}$ we have
    \begin{equation*}
        \M^+(D^2Q) <0 \text{ in } \Sigma^{r\wedge R},\;\; Q\geq U \text{ on } \partial \Sigma^{r\wedge R},
    \end{equation*}
    so this function $Q$ is a classical supersolution in $\Sigma^{r\wedge R}$, and by the comparison principle, $Q \geq U$ in $\Sigma^{r\wedge R}$. Since $Q(x_0,0)= U(x_0,0)$ for any $x_0 \in \Sigma_0$, it follows that
    \begin{equation*}
          2R^{-1} A  = \partial_\nu Q(x_0) \geq \partial_\nu U(x_0) = M^{r,+}(\phi_R,x_0).
    \end{equation*}
    This gives the desired upper bound for $M^{r,+}(\phi_R,x_0)$. The respective lower bound for $M^{r,+}(\phi_R,x_0)$ and the bounds for $M^{r,-}(\phi_R,x_0)$ are obtained in an entirely analogous manner and we omit the details. This shows that 
    \begin{equation*}
        |M^{\pm,r}(\phi_R,x_0)|\;\leq CR^{-1},\;\;\forall\; x_0 \in \Sigma_0,
    \end{equation*}
    for some $C=C(\lambda,\Lambda,d)$.
\end{proof}

\begin{lem}[Comparison principle for smooth functions]\label{lem:ClassicalComparisonForIOperator}
	Let $u,v : \Sigma_0 \to \mathbb{R}$ be bounded functions such that $\I^{r}(u,\cdot)$ and $\I^{r}(v,\cdot)$ are classically defined and
	\begin{equation*}
		\I^{r}(u,x) \geq \I^{r}(v,x) \;\;\forall\;x \in \Sigma_0.
	\end{equation*}
    Then,
    \begin{equation*}
         u(x) \leq v(x)\;\;\forall\;x\in\Sigma_0.
    \end{equation*}
\end{lem}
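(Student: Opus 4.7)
The plan is a penalization-and-touching argument carried out on the Dirichlet extensions. The main difficulty is that $\Sigma_0$ is unbounded, so the supremum of $u-v$ need not be attained at any point and a naive Hopf-type argument does not apply directly. The fix is to force a finite max using the bump functions $\phi_R$ of Proposition \ref{prop:AuxiliaryFunctionsAreNice}, then transfer the touching information into $\Sigma^r$ and close up with the interior comparison principle for $F$.

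Suppose for contradiction that $M := \sup_{\Sigma_0}(u-v) > 0$. By translation invariance of $\I^r$ (Lemma \ref{lem:TranslationInvariant}) we may assume that $(u-v)(0) \geq M/2$. Fix $\alpha > M/2$ and $R$ large (to be chosen), and set $\tilde v := v + \alpha \phi_R$. Because $\phi_R(0)=0$ and $\phi_R \to 1$ at infinity, $(u-\tilde v)(0) \geq M/2$ while $\limsup_{|x|\to\infty}(u-\tilde v)(x) \leq M-\alpha < M/2$, so the continuous bounded function $u-\tilde v$ attains its supremum on $\Sigma_0$ at some finite point $x^*$ with value $c := (u-\tilde v)(x^*) \geq M/2$. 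By construction $u \leq \tilde v + c$ on $\Sigma_0$ with equality at $x^*$, and both sides vanish on $\Sigma_r$. The comparison principle for $F$ on $\Sigma^r$ with bounded data (see Appendix \ref{sec:ExAndUn}) therefore yields $U^r_u \leq U^r_{\tilde v + c}$ on $\overline{\Sigma^r}$ with equality at $(x^*,0)$. Since both extensions are $C^{1,\gamma}$ up to $\Sigma_0$ by Theorem \ref{thm:MilakisSilvestreDirichletRegularity}, the nonnegative difference has nonnegative $\nu$-derivative at the touching point, i.e.
\[
\I^r(u, x^*) \;\leq\; \I^r(\tilde v + c, x^*).
\]
Now apply Lemma \ref{lem:AlVAladdConstants} to peel off the constant $c$, and then Lemma \ref{lemSetup:DtoNExtremals} together with Proposition \ref{prop:AuxiliaryFunctionsAreNice} to control the $\alpha \phi_R$ perturbation:
\[
\I^r(\tilde v + c, x^*) \;=\; \I^r(v + \alpha \phi_R, x^*) - \tfrac{c}{r} \;\leq\; \I^r(v, x^*) + \tfrac{C\alpha}{R} - \tfrac{c}{r}.
\]
Combining with the hypothesis $\I^r(u, x^*) \geq \I^r(v, x^*)$ gives $c \leq rC\alpha/R$, and taking $R > 2rC\alpha/M$ contradicts $c \geq M/2$.

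The hard part is the unboundedness of $\Sigma_0$: the $\phi_R$ penalty compactifies the problem, and what makes the argument actually close is the quantitative comparison between two competing linear-in-$c$ quantities, the $O(R^{-1})$ cost of the penalty from Proposition \ref{prop:AuxiliaryFunctionsAreNice} against the $c/r$ gain from the constant shift in Lemma \ref{lem:AlVAladdConstants}. The positive $1$-homogeneity of $F$ enters critically in Lemma \ref{lem:AlVAladdConstants}, which converts a constant shift in the boundary data into an exact constant shift of the normal derivative.
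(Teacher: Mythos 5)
Your argument is correct and follows essentially the same approach as the paper: both proofs compactify the unbounded supremum with a penalty $\alpha\phi_R$, locate a touching point, peel the constant with Lemma \ref{lem:AlVAladdConstants}, control the penalty's effect via Lemma \ref{lemSetup:DtoNExtremals} and Proposition \ref{prop:AuxiliaryFunctionsAreNice}, and play the $O(R^{-1})$ penalty cost against the $c/r$ gain. The only differences are cosmetic — you normalize with translation invariance and a single radius $R$ where the paper manages a hierarchy $R_0,R_1,R_2$, and you spell out the extension-plus-Hopf step that the paper invokes as the global comparison property of $\I^r$.
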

    \begin{proof}
         Arguing by contradiction, suppose that for some $\delta>0$
         \begin{equation*}
              \sup \limits_{x\in \Sigma_0} u-v = \delta.
         \end{equation*}	 
         For every $R>0$, we consider the function
         \begin{equation*}
              h_{R}(x) = u(x)-v(x)-2\delta\phi_R(x),\;\;x \in \Sigma_0.
         \end{equation*}
         where $\phi_R$ is the auxiliary function defined in \eqref{eqSetUp:Phi_Psi_def}. For the purposes of the proof, we will need to select parameters $R_0$,$R_1$ and $R_2$. First, let $R_0$ be large enough so that
         \begin{equation*}
              \sup \limits_{x\in B_{R_0}}\left\{ u-v\right\} \geq \delta/2.         
         \end{equation*}
         The parameter $R_1$ will be specified at the end of the proof, but for now, we will only consider those $R_1$ large enough so that
         \begin{equation*}
               \sup \limits_{x \in B_{R_0}}\phi_{R_1}(x) \leq 1/8.
         \end{equation*} 
		 We point out that a subsequently larger choice of $R_1$ will have an effect on $R_2$, but the value of $R_2$ does not change the definition of (and hence equation for) the auxiliary function $h_{R_1}$.
         The parameter $R_2$, determined by $R_1$ and $R_0$, is the smallest one such that
         \begin{equation*}
              \inf \limits_{x\in B_{R_2}^c}\phi_{R_1}(x) \geq 3/4,\;\; R_2 \geq R_0.
         \end{equation*}
         Combining the inequalities above, we have that
         \begin{equation*}
             \sup \limits_{x\in B_{R_2}} h_{R_1} \geq \sup \limits_{x \in B_{R_0}} h_{R_1} \geq \sup \limits_{x \in B_{R_0}} \left \{ u-v\right \}-\delta/4\geq \delta/4>0,
         \end{equation*}
         and
         \begin{equation*}
              \sup \limits_{x\in B_{R_2}^c} h_{R_1} \leq \sup \limits_{x \in B_{R_2}^c} \{ u-v\}-3\delta/2 \leq 0.
         \end{equation*}
         Hence, $\sup \limits_{\Sigma_0} h_{R_1} = \sup \limits_{B_{R_2}} h_{R_1}$ and compactness yields that
         \begin{equation*}
             \exists\; x_0 \in \Sigma_0 \textnormal{ s.t. } \sup \limits_{\Sigma_0} h_{R_1} = h_{R_1}(x_0)
         \end{equation*}
         Thus,  $u$ lies below $v+2\delta \phi_{R_1}+h_{R_1}(x_0)$ in $\Sigma_0$ and it is touched by it at $x_0$. Therefore,
         \begin{align*}
             \I^{r}(u,x_0) & \leq \I^{r}(v+2\delta \phi_{R_1}+h_{R_1}(x_0),x_0),\\
                           & =\I^{r}(v+2\delta \phi_{R_1},x_0)-r^{-1} h_{R_1}(x_0),\\
                           & \leq \I^{r}(v+2\delta \phi_{R_1},x_0)-(4r)^{-1}\delta ,                           
         \end{align*}
         The last two lines being due to Lemma \ref{lem:AlVAladdConstants} and the fact that $h_{R_1}(x_0)\geq \delta/4$ by construction. On the other hand, Lemma \ref{lemSetup:DtoNExtremals} says that
         \begin{equation*}
              \I^{r}(v+2\delta\phi_{R_1},x_0) - \I^{r}(v,x_0)\leq M^{r,+}(2\delta \phi_{R_1},x_0) = 2\delta M^{r,+}(\phi_{R_1},x_0).
         \end{equation*}
         Next, by Proposition \ref{prop:AuxiliaryFunctionsAreNice},
         \begin{equation*}
              M^{r,+}(\phi_{R_1},x_0) \leq CR^{-1}_1,\;\;C=C(\lambda,\Lambda,d)R^{-1}_1,
         \end{equation*}
         as long as $r,R_1\geq 1$. Hence,
         \begin{equation*}
              \I^{r}(u,x_0) \leq  \I^{r}(v,x_0)+ CR^{-1}_1 - (4r)^{-1}\delta.
         \end{equation*}         
         Finally, since $\delta>0$, $R_1$ may be taken large enough so that $R_1^{-1} < r^{-1}\delta/(4C)$, thus
         \begin{equation*}
             \I^{r}(u,x_0) < \I^{r}(v,x_0),
         \end{equation*}
         which yields a contradiction, it follows that $u\leq v$ in $\Sigma_0$, as we wanted.
    \end{proof}

\begin{lem}\label{lem:GlobalEpLevelBounds} 
	If $w$ solves
	\[
	\I^{r}(w,x) = g(x)\ \ \text{in}\ \Sigma_0,
	\]
	then
	\begin{equation*}
	-r\norm{g}_{L^\infty} \leq w \leq r\norm{g}_{L^\infty}
	\end{equation*}
\end{lem}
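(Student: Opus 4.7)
The plan is to squeeze $w$ between two constant barriers by combining the comparison principle of Lemma \ref{lem:ClassicalComparisonForIOperator} with an explicit formula for $\I^r$ evaluated on constants. The bound is sharp because the barriers will realize the extremal values $\pm r\norm{g}_{L^\infty}$.

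First, I would compute $\I^r$ on constants. Taking $\phi \equiv 0$ in \eqref{eqSetUp:DirichletrScale}, the 1-homogeneity \eqref{eqSetUp:1HomF} gives $F(0)=0$, so the zero function is a solution, and uniqueness forces $U^r_0 \equiv 0$; hence $\I^r(0,y)=0$ for every $y \in \Sigma_0$. Then Lemma \ref{lem:AlVAladdConstants} applied to $\phi \equiv 0$ yields the explicit identity
\begin{equation*}
    \I^r(c,y) = -\tfrac{c}{r} \quad\text{for every constant } c \in \real.
\end{equation*}

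Next, I would set $c_+ := r\norm{g}_{L^\infty}$ and $c_- := -r\norm{g}_{L^\infty}$. Then
\begin{equation*}
    \I^r(c_+,y) = -\norm{g}_{L^\infty} \leq g(y) = \I^r(w,y), \qquad \I^r(c_-,y) = \norm{g}_{L^\infty} \geq g(y) = \I^r(w,y),
\end{equation*}
for all $y \in \Sigma_0$. Applying Lemma \ref{lem:ClassicalComparisonForIOperator} to the first inequality (with $u = w$, $v = c_+$) gives $w \leq c_+$; applying it to the second (with $u = c_-$, $v = w$) gives $c_- \leq w$. This is exactly the asserted two-sided bound.

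The only step to worry about is the hypothesis of Lemma \ref{lem:ClassicalComparisonForIOperator} that $\I^r(w,\cdot)$ be \emph{classically} defined. On the barrier side this is trivially met, and for $w$ itself the equation $\I^r(w,\cdot)=g$ together with the boundary regularity from Theorem \ref{thm:MilakisSilvestreDirichletRegularity} (in the same vein as the proof of Lemma \ref{lemSetup:DtoNExtremals}) should guarantee it; if any subtlety arises in a less regular setting, one can repeat the doubling-of-variables argument used to prove Lemma \ref{lem:ClassicalComparisonForIOperator} directly against the explicit linear profile $c_\pm(1 - x_{d+1}/r)$ in $\Sigma^r$, which is a classical solution of \eqref{eqSetUp:DirichletrScale} by 1-homogeneity and thus sidesteps any regularity issue on $\Sigma_0$.
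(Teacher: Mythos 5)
Your proof is correct and follows essentially the same route as the paper: compute $\I^r$ on constants via Lemma \ref{lem:AlVAladdConstants} and $\I^r(0,\cdot)=0$, then squeeze $w$ between the constant barriers $\pm r\norm{g}_{L^\infty}$ using the comparison principle of Lemma \ref{lem:ClassicalComparisonForIOperator}. Your closing remark about why $\I^r(w,\cdot)$ is classically defined is a reasonable point of care that the paper leaves implicit, but it does not change the argument.
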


\begin{proof}
	Lemma \ref{lem:AlVAladdConstants} plus $\I^r(0,\cdot)=0$ gives that
	\begin{equation*}
	     \I^r(c,\cdot) = -\frac{1}{r}c.
	\end{equation*}
	Therefore, 
    \begin{equation*}
	     \I^r(r\|g\|_{\infty},\cdot) \leq \I^r(w,\cdot)\leq \I^r(-r\|g\|_{\infty},\cdot).
	\end{equation*}
Then by the comparison of solutions, Lemma \ref{lem:ClassicalComparisonForIOperator}, we have
	\begin{equation*}
		 - r\|g\|_\infty \leq w \leq r\|g\|_\infty.
	\end{equation*}
\end{proof}

\begin{lem}\label{lem:IrComparison}
	Suppose that $r_2\geq r_1$ and that $u\geq 0$, then
	\begin{equation*}
	     \I^{r_2}(u,y)\geq \I^{r_1}(u,y)\ \ \forall\ y\in\Sigma_0.
    \end{equation*}
\end{lem}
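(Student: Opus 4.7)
The plan is to compare the two Dirichlet extensions $U^{r_1}_u$ and $U^{r_2}_u$ on the common strip $\Sigma^{r_1}$, and then read off the inequality $\I^{r_2}(u,y) \geq \I^{r_1}(u,y)$ from the normal derivative of their difference at $\Sigma_0$. The geometric intuition is that on $\Sigma^{r_1}$ the function $U^{r_2}_u$ has not yet had to fall off to zero, so it should sit above $U^{r_1}_u$; since both agree with $u$ on $\Sigma_0$, the larger function must have the larger inward normal derivative there.

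First I would establish nonnegativity of each extension. Since $u \geq 0$ on $\Sigma_0$ and $U^{r_i}_u = 0$ on $\Sigma_{r_i}$, comparison of $U^{r_i}_u$ with the classical solution $0$ of $F(D^2 \cdot) = 0$ yields $U^{r_i}_u \geq 0$ throughout $\Sigma^{r_i}$ for $i=1,2$. In particular, the trace of $U^{r_2}_u$ on the interior slice $\Sigma_{r_1}\subset \Sigma^{r_2}$ is nonnegative.

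Next I would set $V := U^{r_2}_u - U^{r_1}_u$ on $\Sigma^{r_1}$. By construction $V = 0$ on $\Sigma_0$, and by the previous step $V \geq 0$ on $\Sigma_{r_1}$, so $V \geq 0$ on all of $\partial \Sigma^{r_1}$. Uniform ellipticity \eqref{eqSetUp:EllipticF} applied to $F(D^2 U^{r_1}_u) = F(D^2 U^{r_2}_u) = 0$ shows that $V$ is a bounded viscosity supersolution of $\M^-(D^2 V) = 0$ in $\Sigma^{r_1}$, and comparison against the classical solution $0$ then forces $V \geq 0$ throughout $\Sigma^{r_1}$. With $V \geq 0$ above $\Sigma_0$ and $V = 0$ on $\Sigma_0$, the boundary $C^{1,\gamma}$ regularity supplied by Theorem \ref{thm:MilakisSilvestreDirichletRegularity} guarantees that the classical normal derivative exists and satisfies $\partial_\nu V(y) \geq 0$ for every $y \in \Sigma_0$, which is precisely $\I^{r_2}(u,y) \geq \I^{r_1}(u,y)$.

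The main obstacle I anticipate is purely technical: invoking the comparison principle for $\M^-$ on the unbounded slab $\Sigma^{r_1}$. This should cause no real trouble, since each $U^{r_i}_u$ is bounded by $\|u\|_{L^\infty(\Sigma_0)}$ via the ordinary maximum principle, so $V$ is globally bounded by $2\|u\|_{L^\infty(\Sigma_0)}$ and the standard comparison theory collected in the appendix applies.
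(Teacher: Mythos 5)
Your proof is correct and follows essentially the same route as the paper: establish $U^{r_2}_u \geq 0$ on $\Sigma_{r_1}$ by comparison with zero, then compare $U^{r_2}_u$ and $U^{r_1}_u$ on $\Sigma^{r_1}$ (the paper directly applies the comparison principle for $F$ since both solve the same equation, whereas you pass through the difference $V$ and the Pucci minimal operator $\M^-$, which is a cosmetic variant of the same argument), and finally read off the ordering of the normal derivatives on $\Sigma_0$.
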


\begin{proof}
	Note that $\Sigma^{r_1} \subset \Sigma^{r_2}$, so $U^{r_2}_u$ is defined in $\Sigma^{r_1}$. Since $U^{r_2}_u = u \geq 0$ on $\Sigma_0$, the comparison principle implies that $U^{r_2}_u\geq 0$ in $\Sigma^{r_2}$, and in particular $U^{r_2}_u \geq 0$ on $\Sigma_{r_1}$. Moreover, $U^{r_1}_u$ and $U^{r_2}_u$ agree in $\Sigma_0$ and solve the same equation in $\Sigma^{r_1}$. Thus $U^{r_2}_u$ is a supersolution for the problem solved by $U^{r_1}_u$, so that $U^{r_1}_u\leq U^{r_2}_u$ everywhere in $\Sigma^{r_1}$.
	
	Since the two functions agree on $\Sigma_0$, their normal derivatives must be ordered, namely
	\begin{equation*}
		 \partial_\nu U^{r_2}_u(y) \geq \partial_\nu U^{r_1}_u(y)\ \ \forall\ y\in \Sigma_0,
	\end{equation*}
	and the lemma follows.
	
	\end{proof}

\begin{lem}\label{lem:RHSComparison}
	Let $r\geq 1$ be fixed. Suppose that there exist bounded, classical respectively sub and super solutions $w_1$ and $w_2$ to
	\begin{align*}
		\I^r(w_1,y) \geq c_1 + g(y)\ \ \text{and}\ \ \I^r(w_2,y) \leq c_2 + g(y)\ \  \text{in}\ \Sigma_0.
	\end{align*}
	Then $\displaystyle c_1-c_2\leq \frac{1}{r}\sup_{\Sigma_0}|w_1-w_2|$.
\end{lem}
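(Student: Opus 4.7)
The plan is to argue by contradiction and reduce everything to the classical comparison principle for smooth functions, Lemma \ref{lem:ClassicalComparisonForIOperator}. Set $M := \sup_{\Sigma_0}|w_1-w_2|$, which is finite since $w_1,w_2$ are bounded, and suppose toward a contradiction that $c_1-c_2 > M/r$.

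The key idea is to exploit the fact (Lemma \ref{lem:AlVAladdConstants}) that $\I^r$ interacts with constant shifts in the trivially explicit way $\I^r(\phi+c,y)=\I^r(\phi,y)-r^{-1}c$, so a gap in the right-hand sides can be converted into a comparison between two shifted functions. Concretely, define $\tilde w_2 := w_2 - M$. Then $\tilde w_2$ is still bounded and $\I^r(\tilde w_2,\cdot)$ is still classically defined, with
\begin{equation*}
\I^r(\tilde w_2,y) \;=\; \I^r(w_2,y)+\tfrac{M}{r} \;\leq\; c_2+g(y)+\tfrac{M}{r}.
\end{equation*}
Using the standing hypothesis $\I^r(w_1,y)\geq c_1+g(y)$ and the contradiction assumption $c_1-c_2>M/r$, we obtain pointwise
\begin{equation*}
\I^r(w_1,y)\;\geq\; c_1+g(y) \;>\; c_2+g(y)+\tfrac{M}{r}\;\geq\;\I^r(\tilde w_2,y)\qquad\forall\,y\in\Sigma_0.
\end{equation*}
Since both $\I^r(w_1,\cdot)$ and $\I^r(\tilde w_2,\cdot)$ are classically defined and the functions are bounded, Lemma \ref{lem:ClassicalComparisonForIOperator} applies and yields $w_1 \leq \tilde w_2 = w_2-M$ on $\Sigma_0$.

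The conclusion $w_2-w_1\geq M$ on $\Sigma_0$, combined with the defining property $|w_1-w_2|\leq M$, forces $w_1\equiv w_2-M$ identically. Plugging back, $\I^r(w_1)=\I^r(w_2)+M/r \leq c_2+g+M/r < c_1+g \leq \I^r(w_1)$, a contradiction. Hence $c_1-c_2\leq M/r$, as desired. I do not anticipate any real obstacle: the argument is essentially just the constant-shift identity of Lemma \ref{lem:AlVAladdConstants} plugged into the comparison principle of Lemma \ref{lem:ClassicalComparisonForIOperator}, and the only minor check is that the shifted function $\tilde w_2$ inherits the boundedness and the classical definition of $\I^r(\tilde w_2,\cdot)$ from $w_2$, which is immediate.
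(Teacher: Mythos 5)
Your proof is correct and uses the same two key ingredients as the paper's: the constant-shift identity of Lemma \ref{lem:AlVAladdConstants} and the comparison principle of Lemma \ref{lem:ClassicalComparisonForIOperator}. The paper avoids the contradiction by shifting $w_2$ directly by $r(c_1-c_2)$ rather than by $M$, which gives $r(c_1-c_2)\leq w_2-w_1\leq \sup_{\Sigma_0}|w_1-w_2|$ in one pass; your argument is equivalent but slightly longer.
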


\begin{proof}
    Let $\tilde w_2 := w_2 -r(c_1-c_2)$, then by Lemma \ref{lem:AlVAladdConstants} we have
    \begin{align*}
        \I^r(\tilde w_2,y) & = \I^r(w_2,y)+c_1-c_2,\\
                           & \leq g(y) +c_1,\\
                           & \leq \I^r(w_1,y).
    \end{align*}
    Then Lemma \ref{lem:ClassicalComparisonForIOperator} yields that $w_1\leq \tilde w_2$, or  $w_1 \leq \tilde w_2 = w_2-r(c_1-c_2)$. Rearranging,
    \begin{equation*}
          r(c_1-c_2) \leq w_2-w_1 \leq \sup \limits_{\Sigma_0}|w_1-w_2|
    \end{equation*}
    dividing by $r$ the lemma follows.
\end{proof}

%%%%%%%%%%%%%%%%%%%%%%%%%%%%%%%%%%%%%%%%%%%%%%
%%%%%%%%%%%%%%%%%%%%%%%%%%%%%%%%%%%%%%%%%%%%%%
%%%%%%%%%%%%%%%%%%%%%%%%%%%%%%%%%%%%%%%%%%%%%%
%%%%%%%%%%%%%%%%%%%%%%%%%%%%%%%%%%%%%%%%%%%%%%
%%%%%%%%%%%%%%%%%%%%%%%%%%%%%%%%%%%%%%%%%%%%%%
\section{The Proof In a Perfect World}\label{sec:PerfectWorld}

In this section we develop an integro-differential line of attack for (\ref{eqIntro:NonlocalEpsilonEq}).  If one knew a priori that $\I^1$ were an integro-differential operator satisfying certain assumptions similar to those in \cite{Schw-10Per}, then the homogenization strategy for integro-differential equations could be applied to (\ref{eqIntro:NonlocalEpsilonEq}) without too much modification.  It turns out that this will indeed be the case, however, the simple set-up of (\ref{eqIntro:MainStripDomain})-- namely translation invariance of $F$-- allows for a proof which does not invoke \cite{Schw-10Per} but is motivated by it.  The obstacle to carrying out this line of attack is proving some fine properties of the L\'evy measures appearing in an inf-sup representation for $\I^1$ (Section \ref{sec:NonlinearCourrege}).  Such properties of the L\'evy measures representing $\I^1$ are fundamental to the application of regularity theory for integro-differential operators, and it is not known in exactly which class the operators may be.  Hence we do not know which, if any, of the results \cite{CaSi-09RegularityIntegroDiff}, \cite{Chan-2012NonlocalDriftArxiv}, \cite{ChDa-2012NonsymKernels}, \cite{GuSc-12ABParma}, \cite{KassRangSchwa-2013RegularityDirectionalINDIANA}, \cite{Silv-2006Holder} may be applicable to the operators $\I^r$.  We mention these issues again below. 

We begin with an observation that in (\ref{eqIntro:Dirichlet}) if the operator, $F$, were linear then $\I^1$ defined in (\ref{eqIntro:DirichletNeumann}) would again be linear.  Furthermore $\I^1$ always satisfies a comparison principle (equivalent to a global maximum principle in the linear case) due to the fact that (\ref{eqIntro:Dirichlet}) also has a comparison principle between sub and super solutions in $\Sigma^1$. Thus, it is well known in the linear case (\cite[Theorem 1.5]{Courrege-1965formePrincipeMaximum}, also Theorem \ref{thm:LinearCourrege} below) that $\I^1$ must admit an integro-differential representation.  We use this as motivation to obtain a similar representation in the nonlinear case (producing an inf-sup of linear operators) which brings the equation exemplified by (\ref{eqIntro:NonlocalEpsilonEq}) into much closer alignment with the homogenization of nonlocal operators studied in \cite{Schw-10Per}, where an inf-sup structure was assumed.  Here we give brief overview of some of the details, and we expect to develop these ideas further in a subsequent work.  Some examples of a similar representation for \emph{local} operators with a comparison principle in the context of semigroups and viscosity solutions can be found in \cite{AlvarezLionsGuichardMorel-1993AxiomsImageProARMA}, \cite{BarlesSouganidis-1998NewApproachFrontsARMA}, and \cite{Biton-2001NonlinMonoSemiViscSolAIHP}.

\subsection{Courrege's Theorem and an Inf-Sup Representation}\label{sec:NonlinearCourrege}

We will use the space $C^2(\Om)$ to be the collection of functions, $f$, with continuous second derivatives on $\Om$ with $\norm{f}_{L^\infty}$, $\norm{Df}_{L^\infty}$, and $\norm{D^2f}_{L^\infty}$ all finite.  This next definition can be thought of as a nonlinear version of the more commonly known global non-negative maximum principle for linear operators.

\begin{DEF}\label{def: operators with maximum principle}
	 A map $I:C^2(\mathbb{R}^d) \to C(\mathbb{R}^d)$ is said to satisfy the global comparison principle if given $u,v \in C^2(\mathbb{R}^d)$ such that $u$ touches $v$ from below at $x_0$ then,
     \begin{equation*}
          I(u,x_0)\leq I(v,x_0).	
     \end{equation*} 
     Here, ``$u$ touching $v$ from below at $x_0$'' means simply that
     \begin{equation*}
     	  u\leq v \text{ in } \mathbb{R}^d, \text{ and } u(x_0)=v(x_0).
     \end{equation*} 
\end{DEF}

  \begin{thm}[Form of Linear Operator {\cite[Theorem 1.5]{Courrege-1965formePrincipeMaximum}}]\label{thm:LinearCourrege}
	If $I$ is linear and satisfies the global comparison principle of Definition \ref{def: operators with maximum principle}, then $I$ is a linear L\'evy operator of the form
	 \begin{align}
	 	  I(u,x) = &\Tr(A(x)D^2u(x))+(B(x),\nabla u(x))+C(x)u(x),\nonumber\\
         &+\int_{\mathbb{R}^d}(u(x+h)-u(x)-(\nabla u(x),h)\ind_{B_1(0)}(h))\mu(x,dh),\label{eqPW:LevyOp}
	 \end{align}
	where $A$, $B$, $C$ are bounded functions, $A\geq 0$, $C\leq0$, and $\mu$ satisfies
	\begin{equation*}
		\sup_x \int_{\real^d} \min(\abs{h}^2,1)\mu(x, dh) <+\infty.
	\end{equation*}
  \end{thm}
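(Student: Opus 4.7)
The proof reduces to a pointwise construction: for each fixed $x_0 \in \real^d$, one produces coefficients $A(x_0), B(x_0), C(x_0)$ and a measure $\mu(x_0, \cdot)$ such that the right-hand side of \eqref{eqPW:LevyOp} agrees with $I(\cdot, x_0)$ on all of $C^2(\real^d)$. Fix $x_0$ throughout. The essential consequence of the global comparison principle combined with the linearity of $I$ is the \emph{positivity property}
\begin{equation*}
w \in C^2(\real^d),\ w \geq 0,\ w(x_0) = 0\ \Longrightarrow\ I(w, x_0) \geq 0,
\end{equation*}
obtained by applying Definition~\ref{def: operators with maximum principle} with $u \equiv 0$ and $v = w$. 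This is the nonlinear-looking replacement for Courr\`ege's ``positive maximum principle'' hypothesis and it is the engine driving the Riesz--Markov step below.

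Fix a cutoff $\chi \in C_c^\infty(\real^d)$ with $\chi \equiv 1$ on $B_{1/2}$ and $\mathrm{supp}(\chi) \subset B_1$. Every $u \in C^2(\real^d)$ decomposes as
\begin{equation*}
u(y) = \bigl[u(x_0) + \nabla u(x_0)\cdot(y - x_0) + \tfrac{1}{2}(y - x_0)^\top D^2 u(x_0)(y - x_0)\bigr]\chi(y - x_0) + w(y),
\end{equation*}
where $w \in C^2(\real^d)$ vanishes to second order at $x_0$. Linearity of $I$ sends the polynomial-times-cutoff piece to $\tilde{C}(x_0) u(x_0) + \tilde{B}(x_0)\cdot \nabla u(x_0) + \Tr\bigl(A(x_0) D^2 u(x_0)\bigr)$, where the three coefficients are defined by applying $I(\cdot, x_0)$ to the fixed bumps $\chi(\cdot - x_0)$, $(\cdot - x_0)_i \chi(\cdot - x_0)$, and $\tfrac{1}{2}(\cdot - x_0)_i (\cdot - x_0)_j \chi(\cdot - x_0)$. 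The residual map $w \mapsto I(w, x_0)$, on functions vanishing to order two at $x_0$, is a positive linear functional (decompose $w = w^+ - w^-$ and mollify each side to restore the second-order vanishing, invoking the positivity property above); Riesz--Markov then yields a positive Radon measure $\mu(x_0, \cdot)$ on $\real^d \setminus \{x_0\}$ representing the nonlocal part.

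Integrability, signs, and boundedness are extracted by well-chosen test functions. The bound $\int_{B_1} |h|^2 \mu(x_0, dh) < \infty$ follows by applying $I(\cdot, x_0)$ to $|h|^2 \chi(h)$, and $\int_{B_1^c}\mu(x_0, dh) < \infty$ from testing against $1 - \chi(h)$; together they yield the L\'evy condition $\int \min(|h|^2, 1)\mu(x_0, dh) < \infty$. Absorbing the first-order compensator $\int (\nabla u(x_0), h) \chi(h)\mu(x_0, dh)$ into $\tilde{B}(x_0)$ to define $B(x_0)$ produces exactly the compensated integrand in \eqref{eqPW:LevyOp}. The sign $A(x_0) \geq 0$ comes from applying positivity to quadratic bumps $u_\xi(y) = (\xi \cdot (y - x_0))^2 \chi(y - x_0)$, augmented by a large multiple of $|y - x_0|^2 \chi$ to guarantee global nonnegativity; the sign $C(x_0) \leq 0$ from applying positivity to $1 - \chi(\cdot - x_0)$, which is nonnegative and vanishes at $x_0$. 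Boundedness of $A, B, C$ in $x_0$ is immediate once one notes that $I$ takes values in $C(\real^d)$ and that all test bumps are translates of fixed functions with $x_0$-independent $C^2$ norms.

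The main obstacle I anticipate is the Riesz--Markov step, specifically showing that the map $\phi \in C_c(\real^d \setminus \{x_0\}) \mapsto I(W_\phi, x_0)$ (for any $C^2$ extension $W_\phi$ vanishing to order two at $x_0$ and equal to $\phi$ on a neighborhood of $\mathrm{supp}(\phi)$) is well-defined independently of the choice of extension and genuinely positive. Well-definedness reduces to verifying that $I(W, x_0) = 0$ whenever $W \in C^2(\real^d)$ vanishes to order two at $x_0$ and identically on some neighborhood of $\mathrm{supp}(\phi)$; this follows by sandwiching two such extensions between $\pm \delta |\cdot - x_0|^2 \chi_\rho(\cdot - x_0)$, invoking positivity twice, and letting $\delta \to 0$. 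Once the measure is in hand and the compensator has been absorbed into $B(x_0)$, the remainder of the argument is essentially bookkeeping.
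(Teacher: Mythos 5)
The paper does not prove Theorem \ref{thm:LinearCourrege}; it is cited from Courr\`ege's 1965 paper, so there is no ``paper's own proof'' to compare against. Your proposal reconstructs the standard Courr\`ege argument --- positivity from the comparison principle, Riesz--Markov on the second-order remainder, identification of local coefficients by Taylor expansion --- and the architecture is right, but both of your sign arguments are broken. For $A(x_0)\geq 0$: with a \emph{fixed} cutoff, positivity of $u_\xi(y)=(\xi\cdot(y-x_0))^2\chi(y-x_0)$ gives $2\,\xi^\top A(x_0)\xi+\int(\xi\cdot h)^2\chi(h)\,\mu(x_0,dh)\geq 0$, i.e.\ $2\,\xi^\top A\xi\geq-\int(\xi\cdot h)^2\chi\,\mu$, which is vacuous because the right-hand side is already $\leq 0$. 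You must shrink the support --- take $\chi_\delta(h)=\chi(h/\delta)$ and let $\delta\to0$ --- so that $D^2u_\xi(x_0)=2\,\xi\otimes\xi$ is held fixed while the L\'evy integrability you already established sends the nonlocal term to zero. The aside about ``augmenting by a large multiple of $|y-x_0|^2\chi$'' is unnecessary ($u_\xi$ is already nonnegative) and does not repair the inequality.

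For $C(x_0)\leq 0$ the problem is more serious. Applying positivity to $1-\chi(\cdot-x_0)$ gives $I(1,x_0)\geq I(\chi(\cdot-x_0),x_0)$, which upon substituting the representation reduces to the triviality $\int(1-\chi)\,\mu\geq 0$ and says nothing about the sign of $C(x_0)=I(1,x_0)$. In fact $C\leq 0$ is \emph{not a consequence} of Definition~\ref{def: operators with maximum principle} as stated: the operator $I(u,x)=u(x)$ satisfies the global comparison principle vacuously (touching from below at $x_0$ forces $u(x_0)=v(x_0)$, hence $I(u,x_0)=I(v,x_0)$), yet it has $A=0$, $B=0$, $\mu=0$, and $C\equiv 1>0$. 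Courr\`ege's actual hypothesis is the \emph{positive maximum principle} --- if $u$ attains a nonnegative global maximum at $x_0$, then $I(u,x_0)\leq 0$ --- which, for a linear $I$, is the GCP \emph{plus} the extra requirement $I(1,\cdot)\leq 0$, and it is precisely this extra requirement that produces $C\leq 0$. So the gap is not only in your argument: the $C\leq 0$ clause cannot be derived from the hypothesis as the paper states it, and you should either strengthen the hypothesis to the positive maximum principle or drop that clause from what you try to prove. (Your Riesz--Markov extension step, which you flag as the main obstacle, is indeed the other place where care is needed, but the sandwiching idea you sketch is the right one.)
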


  \begin{rem} Note Theorem \ref{thm:LinearCourrege} does not say that $A,B$ and $C$ are continuous in $x$. For an example of a linear continuous map $I:C^2(\mathbb{R}^d)\to C(\mathbb{R}^d)$ that has the global comparison principle and whose respective coefficients are not continuous, see \cite[Section 1.6]{Courrege-1965formePrincipeMaximum}.

  \end{rem}

  Below, we prove a nonlinear analogue of Theorem \ref{thm:LinearCourrege} dealing with maps $I$ which are not necessarily linear but are at least Fr\'echet-differentiable (see also Remark \ref{rem: Nonlinear Courrege Differentiability Assumption is Technical}).
  \begin{thm}[Representation of Nonlinear Operators]\label{thm: nonlinear Courrege}
    If $I$ is a Fr\'echet differentiable map $C^2(\real^d)\to C(\real^d)$ which satisfies the global comparison principle, then
    \begin{equation}\label{eqPerfectWorld:InfSup}
      I(u,x) = \min\limits_{a} \max\limits_{b}\left(f^{ab}(x)+L^{ab}(u,x)\right)	
    \end{equation}
    where for all $a,b$, $f^{ab}\in C(\real^d)$ and $L^{ab}$ is a L\'evy operator of the form (\ref{eqPW:LevyOp}).
  \end{thm}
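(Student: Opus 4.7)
The plan is to linearize $I$ via Fréchet differentiability, invoke the linear Courrège theorem (Theorem \ref{thm:LinearCourrege}) at each base point to obtain a Lévy representation of every linearization, and then assemble these into $I$ using an explicit inf-sup built from tangent hyperplanes.

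First I would show that for every $u_0 \in C^2(\mathbb{R}^d)$, the bounded linear map $DI(u_0) : C^2(\mathbb{R}^d) \to C(\mathbb{R}^d)$ satisfies the linear global nonnegative maximum principle required by Theorem \ref{thm:LinearCourrege}. Given $\phi \in C^2(\mathbb{R}^d)$ with $\phi \leq 0$ everywhere and $\phi(x_0) = 0$, the function $u_0$ touches $u_0 + t\phi$ from above at $x_0$ for every $t > 0$, so the global comparison principle for $I$ yields
\[
I(u_0 + t\phi, x_0) - I(u_0, x_0) \leq 0.
\]
Dividing by $t$, sending $t \to 0^+$, and using Fréchet differentiability (which gives pointwise convergence of the difference quotient at $x_0$ via the continuous evaluation functional on $C(\mathbb{R}^d)$ with the sup norm) produces $DI(u_0)[\phi](x_0) \leq 0$. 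Theorem \ref{thm:LinearCourrege} then represents each $DI(u_0)$ as a Lévy operator of the form \eqref{eqPW:LevyOp}.

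Second I would establish the explicit identity
\[
I(u,x) \;=\; \inf_{v \in C^2(\mathbb{R}^d)}\; \sup_{w \in C^2(\mathbb{R}^d)}\; \bigl[ I(v,x) + DI(w)[u-v](x) \bigr].
\]
The ``$\leq$'' direction is immediate upon choosing $v = u$, since $DI(w)[0] = 0$ makes the inner supremum collapse to $I(u,x)$. For ``$\geq$'', fix $v$ and apply the fundamental theorem of calculus to the Fréchet differentiable map $t \mapsto I(v + t(u-v), x)$:
\[
I(u,x) - I(v,x) = \int_0^1 DI(v + t(u-v))[u-v](x)\, dt \;\leq\; \sup_{w \in C^2(\mathbb{R}^d)} DI(w)[u-v](x),
\]
so $\sup_w \{I(v,x) + DI(w)[u-v](x)\} \geq I(u,x)$ for every $v$. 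Taking indices $a = v$ and $b = w$, and setting $L^{ab} := DI(w)$ (Lévy by the first step, and independent of $u$) together with $f^{ab}(x) := I(v,x) - DI(w)[v](x) \in C(\mathbb{R}^d)$, puts the identity in the form \eqref{eqPerfectWorld:InfSup}.

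The main obstacle I expect is arriving at the right form of the representation: a pure sup-of-tangents $I = \sup_{u_0} T^{u_0}$ with $T^{u_0}(u,x) := I(u_0,x)+DI(u_0)[u-u_0](x)$ works only when $u\mapsto I(u,\cdot)$ is convex, a pure inf-of-tangents only when it is concave, and the global comparison principle alone forces neither property. The two-parameter inf-sup above sidesteps this by using $v$ as an anchor that fixes the value of $I$ at the base point and $w$ as an independent linearization point, with the fundamental theorem of calculus providing the inequality that forces the inner supremum to dominate $I(u,x) - I(v,x)$. Once the construction is in hand, verification is straightforward; the only remaining bookkeeping is interpreting ``$\min$--$\max$'' as ``$\inf$--$\sup$'' (justified because for $v \neq u$ the inner supremum may be $+\infty$, which is harmless since the outer infimum is attained at $v = u$) and confirming that each $f^{ab}$ lies in $C(\mathbb{R}^d)$ and each $L^{ab}$ belongs to the Lévy class \eqref{eqPW:LevyOp}.
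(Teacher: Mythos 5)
Your proposal is correct and follows essentially the same two-step strategy as the paper: show each Fréchet derivative $DI(w)$ inherits the global comparison principle by a one-sided difference quotient at a touching point (the paper's Proposition 5.6), then obtain the tangent-plane inf-sup identity (the paper's Lemma 5.5, which uses the mean value theorem at a fixed $x_0$ where you use the fundamental theorem of calculus — a cosmetic difference) and invoke Courrège's theorem. The only minor point worth noting is that, as you observe, the paper writes $\min$--$\max$ but, like yours, its inner $\max$ over $\mathcal{D}I$ may fail to be attained for $v\neq u$; both arguments rely only on the outer infimum being attained at $v=u$.
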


  The proof will consist in first obtaining a $\min-\max$ formula for $I$ in terms of linear operators (given by the derivative of $I$), and then showing that those linear operators inherit the global comparison principle from $I$. This is done in Lemma \ref{lem:DifferentiableOnBanach} and Proposition \ref{prop:subdifferential} below.

\begin{rem}\label{rem: Nonlinear Courrege Differentiability Assumption is Technical} The assumption that $I$ be Fr\'echet is technical. Ideally, the $\min-\max$ formula ought to hold when the map $I$ is merely Lipschitz. One way to extend Lemma \ref{lem:DifferentiableOnBanach} below to less regular mappings would be working with the generalized Jacobian of $I$ (in the Clarke sense, see \cite{Cla1990optimization}) instead of the Fr\'echet derivative.

\end{rem}

\begin{lem}\label{lem:DifferentiableOnBanach}
  Given any Fr\'echet differentiable map $I:C^2(\mathbb{R}^d)\to C(\mathbb{R}^d)$ there is a family of linear (bounded) operators $L^{ab}(\cdot,x)$ and functions $g^{ab}(x)\in C(\mathbb{R}^d)$ such that
  \begin{align*}
    I(u,x) = \min \limits_{a} \max \limits_{b} \left \{f^{ab}(x)+L^{ab}(u,x) \right \}.	  
  \end{align*}	  
  Moreover, each  $L^{ab}$ belongs to $\mathcal{D}I$, which is the set defined by  
  \begin{align*}
	  \mathcal{D}I := \{ L \mid L \textnormal{ is the Fr\'echet derivative of } I \textnormal{ at some } v\in C^2(\mathbb{R}^d)\}.
  \end{align*}
\end{lem}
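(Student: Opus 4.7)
The plan is to produce an explicit two-parameter family of affine surrogates for $I$, indexed by pairs $(a,b) \in C^2(\mathbb{R}^d) \times C^2(\mathbb{R}^d)$, and then to recover $I(u,x)$ from them by an inf-sup whose optimality reduces, via the chain rule, to the classical mean value theorem on the line segment from $a$ to $u$ in $C^2(\mathbb{R}^d)$. Concretely, for each pair $(a,b)$ I would set
\begin{equation*}
L^{a,b} := DI(b) \qquad \text{and} \qquad f^{a,b}(x) := I(a,x) - DI(b)(a)(x),
\end{equation*}
so that by construction $L^{a,b}$ is a member of $\mathcal{D}I$, and $f^{a,b} \in C(\mathbb{R}^d)$ as a difference of continuous functions. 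A short algebraic simplification gives
\begin{equation*}
H^{a,b}(u,x) \ := \ f^{a,b}(x) + L^{a,b}(u,x) \ = \ I(a,x) + DI(b)(u-a)(x),
\end{equation*}
and the target identity becomes $I(u,x) = \inf_{a} \sup_{b} H^{a,b}(u,x)$.

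The ``$\leq$'' direction is trivial: I would simply evaluate at $a = u$, at which point $DI(b)$ is applied to the zero function, so $H^{u,b}(u,x) = I(u,x)$ for every $b$. This yields $\sup_b H^{u,b}(u,x) = I(u,x)$ and hence $\inf_a \sup_b H^{a,b}(u,x) \leq I(u,x)$; moreover, the outer infimum is attained at $a = u$ and the inner supremum is then attained by any $b$, so the representation is genuinely a min-max (not merely an inf-sup).

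The ``$\geq$'' direction is the crux, and this is where Fréchet differentiability enters in an essential way. Fix $a,u \in C^2(\mathbb{R}^d)$ and $x \in \mathbb{R}^d$, and consider the scalar function $\phi(t) := I(a + t(u-a), x)$ on $[0,1]$. Since $I: C^2(\mathbb{R}^d) \to C(\mathbb{R}^d)$ is Fréchet differentiable and the evaluation map $\text{ev}_x: C(\mathbb{R}^d) \to \mathbb{R}$ is bounded linear, the chain rule for Fréchet derivatives on a smooth Banach-space curve shows that $\phi$ is differentiable on $[0,1]$ with $\phi'(t) = DI(a + t(u-a))(u-a)(x)$. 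The classical one-variable MVT then supplies $t^* \in (0,1)$ with $\phi(1)-\phi(0) = \phi'(t^*)$, i.e.
\begin{equation*}
I(u,x) - I(a,x) \ = \ DI(b^*)(u-a)(x), \qquad b^* := a + t^*(u-a) \in C^2(\mathbb{R}^d).
\end{equation*}
Plugging $b^*$ into the expression for $H^{a,b}(u,x)$ gives $H^{a,b^*}(u,x) = I(u,x)$ exactly, so $\sup_b H^{a,b}(u,x) \geq I(u,x)$ for every $a$, and the two inequalities together establish the claimed representation.

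I do not anticipate any substantial obstacle; the only point that deserves care is the chain-rule identity for $\phi'(t)$, which is standard but relies on composing the Fréchet derivative of $I$ with the smooth affine curve $t \mapsto a + t(u-a)$ and then with the bounded linear evaluation functional $\text{ev}_x$. The argument uses nothing beyond Fréchet differentiability of $I$, which is consistent with the way the lemma is framed in relation to Theorem \ref{thm: nonlinear Courrege}: the global comparison principle is not needed here, and is separately used in Proposition \ref{prop:subdifferential} to upgrade each linear $L^{a,b}$ to a L\'evy operator of the form \eqref{eqPW:LevyOp}.
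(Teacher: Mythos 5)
Your proof is correct and is essentially the paper's own argument: you define the same affine surrogates $H^{a,b}(u,x)=I(a,x)+DI(b)(u-a)(x)$ that the paper packages as $K_v(u,x)=\max_{L\in\mathcal{D}I}\{I(v,x)+L(u-v,x)\}$, obtain the ``$\leq$'' direction by taking $a=u$, and obtain the ``$\geq$'' direction by applying the scalar mean value theorem to $t\mapsto I(a+t(u-a),x)$ after composing with the evaluation functional $\mathrm{ev}_x$. The only difference is purely notational — you index the inner extremum by the base point $b\in C^2(\mathbb{R}^d)$ rather than by the operator $L=DI(b)\in\mathcal{D}I$ — so no comparison beyond that is needed.
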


\begin{proof}[Proof of \autoref{lem:DifferentiableOnBanach}]
  For $v\in C^2(\mathbb{R}^d)$ define a nonlinear map $K_v:C^2(\mathbb{R}^d)\to C(\mathbb{R}^d)$ by
  \begin{align*}
    K_v(u,x):=\max \limits_{L \in \mathcal{D}I} \left \{I(v,x)+L(u-v,x) \right \},\;\;u\in C^2(\mathbb{R}^d).
  \end{align*}	  
  This $K_v$ is well defined provided $\mathcal{D}I\neq \emptyset$. To prove the lemma, it suffices to show that
  \begin{align*}
    I(u,x) = \min\limits_{v \in C^2(\mathbb{R}^d)} K_v(u,x),\;\;\forall\;u,\;x.
  \end{align*}
  First off, we clearly have
  \begin{align*}
    I(u,x) = K_u(u,x),\;\;\; \forall\;\;u,\;\;x.
  \end{align*}	
  so,
  \begin{align*}
    I(u,x) \geq \min\limits_{v \in C^2(\mathbb{R}^d)} K_v(u,x),\;\;\; \forall\;\;u,\;\;x.
  \end{align*}	    
  To prove the opposite inequality, fix $x_0\in\mathbb{R}^d$. Then the real-valued map
  \begin{align*}
    u \to I(u,x_0) 	  
  \end{align*}	  
  is Fr\'echet-differentiable. Moreover, if $L_u:C^2(\mathbb{R}^d)\to C(\mathbb{R}^d)$ is the linear operator given by the derivative of $I$ at $u$, then the derivative of the above functional is simply given by the evaluation of the operator $L_u$ at $x_0$, so
  \begin{align*}
    \phi \to L(\phi,x_0).  
  \end{align*}	   
  With this in mind, the mean value theorem (applied to the scalar functional) says that given any $u,v \in C^2(\mathbb{R}^d)$ there is some $L \in \mathcal{D}I$ (depending only $u,v$ and $x_0$) such that
  \begin{align*}
    I(u,x_0)=I(v,x)+L(u-v,x_0).	  
  \end{align*}	  
  Taking the maximum for $L \in \mathcal{D}I$, it follows that
  \begin{align*}
    I(u,x_0)\leq \max \limits_{L \in \mathcal{D}I} \left \{ I(v,x)+L(u-v,x_0)\right \} = K_v(u,x_0),  \forall\;u\in C^2(\mathbb{R}^d),
  \end{align*}	  
  since $x_0$ was arbitrary, the lemma is proved.
\end{proof}

\begin{prop}\label{prop:subdifferential}
  Let $I$ be an operator as in Lemma \ref{lem:DifferentiableOnBanach}. If $I$ has the global comparison property then any linear operator $L\in \mathcal{D} I$ also has the global comparison property.
\end{prop}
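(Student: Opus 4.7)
The plan is to exploit linearity of $L$ to reduce the global comparison property to a one-sided statement, and then to extract that statement from the global comparison property of $I$ by a first-order perturbation argument based on the definition of the Fr\'echet derivative.

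First I would observe that because $L$ is linear, it has the global comparison property if and only if
\begin{equation*}
    w\in C^2(\mathbb{R}^d),\ w\leq 0\ \text{on}\ \mathbb{R}^d,\ w(x_0)=0 \;\Longrightarrow\; L(w,x_0)\leq 0,
\end{equation*}
since the general statement with $u$ touching $v$ from below at $x_0$ is obtained by taking $w=u-v$. So the task reduces to proving this one-sided statement for $L$.

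Next, fix any $v_0\in C^2(\mathbb{R}^d)$ such that $L$ is the Fr\'echet derivative of $I$ at $v_0$ (such a $v_0$ exists by hypothesis that $L\in \mathcal{D}I$). Let $w$ be as above. For every $t>0$ the function $v_0+tw$ lies below $v_0$ on all of $\mathbb{R}^d$ and agrees with $v_0$ at $x_0$, i.e. $v_0+tw$ touches $v_0$ from below at $x_0$. The global comparison principle for $I$ therefore gives
\begin{equation*}
    I(v_0+tw,x_0)\leq I(v_0,x_0)\quad \forall\ t>0.
\end{equation*}
On the other hand, by Fr\'echet differentiability of $I$ at $v_0$, and by evaluating the resulting continuous function at $x_0$, we have the scalar expansion
\begin{equation*}
    I(v_0+tw,x_0) - I(v_0,x_0) = t\,L(w,x_0) + o(t)\quad \text{as}\ t\to 0^+.
\end{equation*}
Combining these, $t\,L(w,x_0)+o(t)\leq 0$ for all $t>0$, and dividing by $t$ and letting $t\to 0^+$ yields $L(w,x_0)\leq 0$, which is exactly the reduced form of the global comparison principle for $L$.

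I expect no real obstacle here: the only subtle point is to make sure that the Fr\'echet derivative of $I$ at $v_0$, viewed as an operator from $C^2(\mathbb{R}^d)$ to $C(\mathbb{R}^d)$, produces a valid pointwise expansion at each fixed $x_0$ — but this follows immediately from continuity of the evaluation functional $\phi\mapsto \phi(x_0)$ on $C(\mathbb{R}^d)$ and the chain rule, exactly as used in the proof of Lemma \ref{lem:DifferentiableOnBanach}. Once this linearization step is in place, the rest is a straightforward one-sided limit, and the conclusion of the proposition follows.
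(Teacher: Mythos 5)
Your proof is correct and takes essentially the same route as the paper: perturb $I$ at a point $v_0$ where $L$ is the Fr\'echet derivative by $t\,w$ with $w\leq 0$ and $w(x_0)=0$, invoke the global comparison principle for $I$, and let $t\to 0^+$ using the pointwise linearization. If anything, your explicit reduction of the global comparison property for linear $L$ to the one-sided statement ``$w\leq 0$, $w(x_0)=0 \Rightarrow L(w,x_0)\leq 0$'' is a touch cleaner than the paper's phrasing, which writes the conclusion as $L(\phi,x_0)\leq 0$ when the argument actually yields $L(\phi-\phi(x_0),x_0)\leq 0$; these are the same once one makes the reduction you made explicit.
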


\begin{proof}
  Fix $u\in C^2(\mathbb{R}^d)$, and $\phi \in C^2(\mathbb{R}^d),\; x_0 \in\mathbb{R}^d$ such that $\phi$ achieves its global maximum at $x_0$. The goal is to show that
  \begin{align*}
    L(\phi,x_0)\leq 0, \textnormal{ where  } L = L_u.
  \end{align*}
  For $s\in\mathbb{R}$ let $u_s:= u+s(\phi(x)-\phi(x_0))$. Given that $\phi$ achieves its global maximum at $x_0$, it follows that
  \begin{align*}
    u_s(x_0)=u(x_0),\;\; u_s(x)\leq u(x),\;\;\forall\; x\in\mathbb{R}^d,
  \end{align*}
  which holds as long as $s>0$. This means that $u_s$ touches $u$ from below at $x_0$, thus, since $I$ has the global comparison property,
  \begin{align*}
    I(u_s,x_0)\leq I(u,x_0),\;\;\forall\;s>0.	  
  \end{align*}	  
  Since there is equality for $s=0$, the derivative with respect to $s$ at $s=0$ cannot be positive, but this simply says
  \begin{align*}
    L(\phi,x_0) \leq 0,	  
  \end{align*}	  
  and the proposition is proved.
\end{proof}

\begin{proof}[Proof of Theorem \ref{thm: nonlinear Courrege}]

  The theorem follows immediately by combining the previous Proposition and Lemma with Courr\'ege's theorem (Theorem \ref{thm:LinearCourrege}). Indeed, Lemma \ref{lem:DifferentiableOnBanach} says that 
  \begin{align*}
    I(u,x) = \min\limits_a \max \limits_b \{ L^{ab}(u,x)+f^{ab}(x) \}
  \end{align*}
  where each $L^{ab}$ belongs to $\mathcal{D}I$. Then, Proposition \ref{prop:subdifferential} states in particular that each $L^{ab}$ appearing in the min-max formula satisfies the global comparison principle, and thus (by Theorem \ref{thm:LinearCourrege}) it must be of the form \eqref{eqPW:LevyOp}.

%	Since $I$ is Lipschitz, the graph of $I$ can be touched from below and above at every point by cone functions of opening at most $\norm{I}_{C^2_b}$.  Indeed, we see that
%	\[
%	I(u,\cdot) - I(\phi,\cdot) \leq C\norm{u-\phi}_{C^2_b}.
%	\]  
%	Hence we can represent
%	\begin{equation*}
%		I(u,\cdot) = \min_{\phi\in C^2_b}\left( I(\phi,\cdot) + C\norm{u-\phi}_{C^2_b}   \right) =  \min_{\phi\in C^2_b}\left( I(\phi,\cdot) + C\norm{u-\phi}_{(C^2_b)^{**}}   \right).
%	\end{equation*}
%	The second equality in the previous line is a consequence of the fact that $\norm{u-\phi}_{C^2_b}\leq\norm{u-\phi}_{(C^2_b)^{**}}$ by the Hahn-Banach theorem.  Using the dual representation of $\norm{u-\phi}_{(C^2_b)^{**}}$, we have
%	\begin{equation}\label{eqPerfectWorldDualNorm}
%		I(u,x) = \min_{\phi\in C^2_b} \max_{L\in (C^2_b)^*}\left( I(\phi,x) + L(u-\phi,x)   \right),
%	\end{equation}
%	which again by the Hahn-Banach theorem is always a maximum, not just a supremum.
%	We note $I(\phi,x)-L(\phi,x)=f^{ab}(x)$ in (\ref{eqPerfectWorld:InfSup}).
%	We conclude the proof of the Theorem by invoking Lemma \ref{lem:MinMaxGlobalComparison}, which states that all of the linear operators contained in the min-max can be assumed to satisfy the global comparison principle.  We note that without loss of generality, we can assume that 
%	\[
%	I(0,x) = 0\ \ \textnormal{for all}\ x,
%	\] 
%	as we can simply work with 
%	\[
%	I(u,x)-I(0,x),
%	\]
%	instead of $I$, without changing the Lipschitz nature of $I$.
\end{proof}

%\begin{lem}\label{lem:MinMaxGlobalComparison}
%	If $I$	 is an operator which satisfies the global comparison principle and $I$ can be represented by
%	\[
%	I(u,x) = \min\limits_{a} \max\limits_{b}\left(L^{ab}(u,x)\right)		
%	\]
%	for some collection of linear operator $\{L^{ab}\}$, then the collection can be restricted to contain only those $L^{ab}$ which also satisfy the global comparison principle.
%\end{lem}

%\begin{proof}[Proof of Lemma \ref{lem:MinMaxGlobalComparison}]
%	Assume that $u$ and $v$ are $C^2_b$ functions, $u\leq v$, and $u(x_0)=v(x_0)$.  Assume that $a^*$ is the choice of the parameter that gives equality for the min in the definition of $I(u,x_0)$, and then subsequently let $b^*$ be any choice of parameter that gives the maximum for $v$, i.e.
%	\[
%	L^{a^*b^*}(v,x_0) = \max_{b}\left( L^{a^*b}(v,x_0)  \right).
%	\]
%Then invoking the global comparison principle for $I$ and unraveling the min-max, we see that
%	\begin{align*}
%		0 &\geq I(u,x_0) - I(v,x_0)\\
%		&= \min_a\max_b \left( L^{ab}(u,x_0)\right) - \min_{a}\max_{b}\left( L^{ab}(v,x_0) \right)\\
%		&\geq \max_b\left( L^{a^*b}(u,x_0) \right) - \max_b \left( L^{a^*b}(v,x_0) \right)\\
%		&\geq L^{a^*b^*}(u,x_0) - L^{a^*b^*}(v,x_0).
%	\end{align*} 
%\end{proof}

%%%% OLD PRE FEB '15 VERSION *PROOF* ENDS HERE **** 

\begin{rem}
	We note that Theorem \ref{thm: nonlinear Courrege} is not so surprising once Theorem \ref{thm:LinearCourrege} is established, and such min-max representations have been widely used in the analysis of both first and second order (\textbf{local}) nonlinear PDE for decades (e.g. \cite{ConwayHopf-1964HamiltonTheoryGeneralizedSol}, \cite{Evans-1980OnSolvingPDEAccretive}, \cite{Evans-1984MinMaxRepresentations}, \cite{Fleming-1964CauchyProblemDegenerat-UsesMinMax}, \cite{Fleming-1969CauchyProbNonlinFirstOrder-UsesMinMax}, \cite{Hopf-1950ThePDE-ViscConsLawCPAM}, \cite{Katsou-1995RepresentationDegParEq}, \cite{Souganidis-1985MaxMinRep}).  What will be interesting and most likely difficult is to determine the specific structure of the components of $L^{ab}$ in the $\inf\sup$, specifically that of the L\'evy measure, $\mu(x, dh)$.  In particular whether or not $\mu(x,dh)$ has a density and if is the density comparable to a canonical one such as the one corresponding to the Fractional Laplacian, etc... 
\end{rem}

\begin{rem}
	We note that Theorem \ref{thm: nonlinear Courrege} is very much related to the results on monotonicity preserving semigroups and monotonicity preserving interface motion which appeared in \cite{AlvarezLionsGuichardMorel-1993AxiomsImageProARMA}, \cite{BarlesSouganidis-1998NewApproachFrontsARMA}, and  \cite{Biton-2001NonlinMonoSemiViscSolAIHP} which represent these phenomena as the unique viscosity solutions of degenerate parabolic equations.
\end{rem}

Finally, we remark that the Dirichlet-to-Neumann map always has the global comparison property, so that -in particular- it has a min-max representation whenever it is a Fr\'echet differentiable map from $C^2(\Sigma_0)$ to $C(\Sigma_0)$.
\begin{prop}
	If $\I^1:C^2(\Sigma_0) \to C(\Sigma_0)$ is Fr\'echet differentiable, then it admits an $\min-\max$ representation as in (\ref{eqPerfectWorld:InfSup}).
\end{prop}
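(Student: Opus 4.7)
The plan is to verify the only missing hypothesis of Theorem \ref{thm: nonlinear Courrege}, namely that $\I^1$ satisfies the global comparison principle of Definition \ref{def: operators with maximum principle}, and then invoke that theorem directly.

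First I would reduce the problem to a statement about the Dirichlet problem (\ref{eqSetUp:DirichletrScale}). Fix $u,v\in C^2(\Sigma_0)$ with $u\le v$ on $\Sigma_0$ and $u(x_0)=v(x_0)$ for some $x_0\in\Sigma_0$. Let $U^1_u,U^1_v$ be the respective viscosity solutions of (\ref{eqSetUp:DirichletrScale}) with $r=1$. Both functions vanish on $\Sigma_1$, and on $\Sigma_0$ their boundary data are ordered, so the comparison principle for $F$ (available by uniform ellipticity (\ref{eqSetUp:EllipticF}) and the results recorded in Appendix \ref{sec:ExAndUn}) yields
\begin{equation*}
U^1_u\le U^1_v \quad \text{in } \Sigma^1.
\end{equation*}

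Next I would pass from the ordering of the solutions to the ordering of their normal derivatives at $x_0$. Set $W := U^1_v - U^1_u$. Then $W\ge 0$ in $\Sigma^1$ and $W(x_0)=v(x_0)-u(x_0)=0$, so $x_0$ is a minimum of $W$ on $\overline{\Sigma^1}$. Because $\nu$ points into $\Sigma^1$ from $\Sigma_0$, and because $W$ is classically differentiable at $x_0$ in the direction $\nu$ (this is exactly what it means for $\I^1(u,x_0)$ and $\I^1(v,x_0)$ to be classically defined, as guaranteed by the boundary regularity of Theorem \ref{thm:MilakisSilvestreDirichletRegularity} applied to data in $C^2(\Sigma_0)\subset C^{1,\gam}(\Sigma_0)$), one has
\begin{equation*}
\partial_\nu W(x_0) \;=\; \lim_{t\downarrow 0} \frac{W(x_0+t\nu)-W(x_0)}{t} \;\ge\; 0.
\end{equation*}
Therefore $\partial_\nu U^1_u(x_0) \le \partial_\nu U^1_v(x_0)$, which by the definition (\ref{eqSetUp:IrConstruction}) is exactly $\I^1(u,x_0)\le \I^1(v,x_0)$. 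Hence $\I^1$ satisfies the global comparison principle.

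Finally, since we are given Fr\'echet differentiability of $\I^1:C^2(\Sigma_0)\to C(\Sigma_0)$ and have just established the global comparison principle, Theorem \ref{thm: nonlinear Courrege} applies verbatim (with $\Sigma_0$ playing the role of $\real^d$) and yields a representation of the form (\ref{eqPerfectWorld:InfSup}). The one potentially delicate point is the step asserting classical existence of $\partial_\nu U^1_u(x_0)$ and $\partial_\nu U^1_v(x_0)$; in the framework of this note it is handled cleanly by the Milakis--Silvestre boundary regularity result, which ensures $U^1_u,U^1_v\in C^{1,\gam}$ up to $\Sigma_0$ whenever the boundary data lie in $C^{1,\gam}(\Sigma_0)$.
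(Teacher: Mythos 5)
Your proof is correct and follows essentially the same route as the paper's: establish that $\I^1$ satisfies the global comparison principle by comparing the Dirichlet extensions $U^1_u \le U^1_v$ and reading off the ordering of normal derivatives at the touching point, then invoke Theorem \ref{thm: nonlinear Courrege}. The paper states the normal-derivative comparison and the existence of classical normal derivatives more tersely, but your additional justification (the minimum of $W=U^1_v-U^1_u$ at $x_0$ together with $\nu$ pointing into $\Sigma^1$, and the Milakis--Silvestre $C^{1,\gam}$ boundary regularity) is exactly what is implicit there.
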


\begin{proof}
	First we remark that $\I^1$ does indeed obey the global comparison principle.  Supposing that $u$ is touched from above by $v$ at $x_0\in\Sigma_0$, then by the comparison principle for (\ref{eqSetUp:DirichletrScale}), we see that $U^1_u\leq U^1_v$ in all of $\bar \Sigma^1$.  Hence since $U^1_u(x_0)=U^1_v(x_0)$, we conclude that
	\[
	\I^1(u,x_0) = \partial_\nu U^1_u(x_0) \leq U^1_v(x_0) = \I^1(v,x_0).
	\]
	Since $\mathcal{I}^1$ is Fr\'echet differentiable, Theorem \ref{thm: nonlinear Courrege} yields the $\min-\max$ representation.
\end{proof}

Since we don't actually invoke Theorem \ref{thm: nonlinear Courrege} in our proof of Theorem \ref{thm:HomogMain} but only use it for heuristics, we will also mention one last lemma without proof.  There are many more issues related to Lemma \ref{lem:DtoNRepresentationDetails}, which we will develop in a separate work.  The statement of Lemma \ref{lem:DtoNRepresentationDetails} is, however, useful for the heuristics of our line of attack on Theorem \ref{thm: nonlinear Courrege} and so we include it.
\begin{lem}\label{lem:DtoNRepresentationDetails}
	If $\I^1$ is represented as (\ref{eqPerfectWorld:InfSup}) with $L^{ab}$ in (\ref{eqPW:LevyOp}), then $A^{ab}=0$, $C^{ab}=-1$, and $B^{ab}$, $\mu^{ab}$ are both independent of $x$.  In particular
	\begin{align}\label{eqPerfectWorldI1Rep}
		\I^1(u,x) = 
		-u(x) + &\inf_a\sup_b\{ B^{ab}\cdot \nabla u(x) \nonumber\\
		&\ \ + \int_{\Sigma_0}(u(x+h)-u(x)-(\nabla u(x),h)\ind_{B_1(0)}(h))\mu^{ab}(dh) \}.
	\end{align}
\end{lem}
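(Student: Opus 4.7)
The plan is to read off each claim about $A^{ab}$, $B^{ab}$, $C^{ab}$, $\mu^{ab}$ from a corresponding structural property of $\I^1$ already established in Section \ref{sec:Setup}. By Lemma \ref{lem:DifferentiableOnBanach}, each $L^{ab}$ is the Fr\'echet derivative $L_v$ of $\I^1$ at some base point $v$, so it suffices to verify the claimed properties for each such $L_v$ and then transfer them through the min-max indexing.

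The assertion $C^{ab}=-1$ is immediate from Lemma \ref{lem:AlVAladdConstants}: the identity $\I^1(v+c,x)=\I^1(v,x)-c$ differentiated in $c$ at $c=0$ gives $L_v(1,x)=-1$, and the L\'evy form of $L_v$ evaluated on the constant $1$ equals $C_v(x)$, so $C_v(x)=-1$. For the $x$-independence of $B^{ab}$ and $\mu^{ab}$, I would reparameterize the representation so that every L\'evy operator is evaluated at the single point $x=0$. Differentiating the translation identity $\I^1(\phi,x+y) = \I^1(\tau_y\phi,x)$ of Lemma \ref{lem:TranslationInvariant} in $\phi$ shows that the L\'evy coefficients of $L_v$ at $x+y$ coincide with those of $L_{\tau_y v}$ at $x$. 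Combining this with $\I^1(u,x) = \I^1(\tau_x u,0)$ recasts the representation of Lemma \ref{lem:DifferentiableOnBanach} as
\[
 \I^1(u,x) \;=\; \inf_{\tilde v}\sup_{\tilde L}\bigl\{\I^1(\tilde v,0) + \tilde L(\tau_x u - \tilde v,0)\bigr\},
\]
where every $\tilde L$ is evaluated at $0$. Expanding each $\tilde L$ in its L\'evy form at $0$ and using linearity, the drift and nonlocal parts act on $u(x+\cdot)$ with coefficients $B^{\tilde L} := B_{\tilde L}(0)$ and $\mu^{\tilde L} := \mu_{\tilde L}(0,\cdot)$, both manifestly $x$-independent; the remaining pieces depending only on $\tilde v$ and $\tilde L$ are constants, and positive $1$-homogeneity of $\I^1$ (Lemma \ref{lem:Iis1Homogeneous}) applied via $\I^1(u,x)=t^{-1}\I^1(tu,x)$ as $t\to\infty$ then lets them be taken to be zero in the final representation \eqref{eqPerfectWorldI1Rep}.

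The main obstacle is showing $A^{ab}=0$. My strategy is a scaling test at a point $x_0\in\Sigma_0$: for a fixed symmetric matrix $A$, take $\phi_\rho(y) := \rho^2\,\phi\bigl((y-x_0)/\rho\bigr)$ with $\phi(z) := z^{\top}Az\,\eta(z)$ for a smooth compactly supported cutoff $\eta$ with $\eta\equiv 1$ near the origin. Then $\phi_\rho(x_0)=0$, $\nabla\phi_\rho(x_0)=0$, $D^2\phi_\rho(x_0)=2A$, and $\phi_\rho$ is supported in $B_\rho(x_0)$ with $\|\phi_\rho\|_{L^\infty} = O(\rho^2)$. Plugging directly into the L\'evy form of $L_v$ at $x_0$ yields
\[
 L_v(\phi_\rho,x_0) \;=\; \Tr\bigl(A_v(x_0)\cdot 2A\bigr) + \int \phi_\rho(x_0+h)\,\mu_v(x_0,dh),
\]
and the integral vanishes as $\rho\to 0$ since $|\phi_\rho(x_0+h)|\leq C|h|^2\mathbf{1}_{B_\rho}(h)$ and $\int(|h|^2\wedge 1)\,\mu_v(x_0,dh)<\infty$. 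On the other hand, differentiating $s\mapsto\I^1(v+s\phi_\rho,x_0)$ at $s=0^+$ and invoking Lemma \ref{lemSetup:DtoNExtremals} together with the positive $1$-homogeneity of the Pucci Dirichlet-to-Neumann maps $M^{1,\pm}$ sandwiches $L_v(\phi_\rho,x_0)$ between $M^{1,\pm}(\phi_\rho,x_0)$. The Milakis--Silvestre boundary $C^{1,\gamma}$ estimate (Theorem \ref{thm:MilakisSilvestreDirichletRegularity}) together with the scaling bound $\|\phi_\rho\|_{C^{1,\gamma}} = O(\rho^{1-\gamma})$ then forces $|M^{1,\pm}(\phi_\rho,x_0)|\to 0$ as $\rho\to 0$. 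Hence $\Tr\bigl(A_v(x_0)\cdot 2A\bigr)=0$ for every symmetric $A$, i.e.\ $A_v(x_0)=0$. The delicate piece is securing the quantitative boundary decay of the extremal Dirichlet-to-Neumann on these highly concentrated test data, which is where the $C^{1,\gamma}$-up-to-the-boundary theory for the Pucci equations enters in an essential way.
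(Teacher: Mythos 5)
The paper explicitly declines to prove this lemma (``we will also mention one last lemma without proof'' and ``there are many more issues related to Lemma \ref{lem:DtoNRepresentationDetails}, which we will develop in a separate work''), so there is no reference proof to compare against; your proposal is an attempt to supply an argument the authors chose to defer. Three of your four steps are sound and well organized. The computation $C^{ab}=-1$ is indeed immediate from Lemma~\ref{lem:AlVAladdConstants} applied to the Fr\'echet derivative. The $x$-independence of $B^{ab}$, $\mu^{ab}$ follows correctly by differentiating the translation identity of Lemma~\ref{lem:TranslationInvariant} to get $L_\phi(\psi,x)=L_{\tau_x\phi}(\tau_x\psi,0)$ and re-indexing $v\mapsto\tau_x v$, $w\mapsto\tau_x w$ so that every linearization is evaluated at the origin. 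Your scaling test for $A^{ab}=0$ is the most substantive part and it does work: the sandwich $M^{1,-}(\phi_\rho,x_0)\le L_v(\phi_\rho,x_0)\le M^{1,+}(\phi_\rho,x_0)$ (from Lemma~\ref{lemSetup:DtoNExtremals}, positive $1$-homogeneity of $M^{1,\pm}$, and Fr\'echet differentiability), combined with the boundary estimate of Corollary~\ref{cor: ClassicalEvaluation} applied to the Pucci Dirichlet-to-Neumann maps and the scaling bound $\|\phi_\rho\|_{C^{1,\gamma}}=O(\rho^{1-\gamma})$, kills the nonlocal and extremal contributions while preserving $2\Tr(A_v(x_0)A)$ in the limit.

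The gap is the removal of the offsets $f^{ab}$. You write that ``positive $1$-homogeneity \ldots applied via $\I^1(u,x)=t^{-1}\I^1(tu,x)$ as $t\to\infty$ then lets them be taken to be zero,'' but the rescaled identity $\I^1(u,x)=\min_a\max_b\{t^{-1}f^{ab}+L^{ab}(u,x)\}$ only passes to the limit $\min_a\max_b L^{ab}(u,x)$ if $\sup_{a,b}|f^{ab}|<\infty$, and no such bound is available: in the construction of Lemma~\ref{lem:DifferentiableOnBanach} one has $f^{ab}=\I^1(v,0)-L_w(v,0)$ with $v$ ranging over all of $C^2(\Sigma_0)$, and $1$-homogeneity of $\I^1$ does not let you shrink the index set for $v$ to a bounded set without destroying the identity $\I^1=\min_v K_v$ when $\I^1$ is not convex. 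Worse, once the offsets are gone the inner maximand no longer depends on $a$, so the naive limit collapses to $\max_b L^{b}(u,x)$, which dominates $\I^1(u,x)$ (by Euler's identity $\I^1(u,x)=L_u(u,x)$) with equality only for convex $\I^1$. To obtain the offset-free formula \eqref{eqPerfectWorldI1Rep} one needs a genuinely different construction of the min-max families --- for instance, restricting each inner supremum over $b$ to those $w$ for which the offset vanishes, $L_w(v,0)=\I^1(v,0)$, and then re-verifying the two min-max inequalities --- and that verification is exactly the nontrivial step the authors themselves postpone. Everything else in your proposal is correct and would survive a careful write-up.
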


\begin{rem}
	It is important to stress that it is not clear when $\mu^{ab}$ will be symmetric ($\mu^{ab}(-dh)=\mu^{ab}(dh)$) and $B^{ab}=0$.  In particular when $F$ is rotationally invariant, then such properties of $B^{ab}$ and $\mu^{ab}$ can be shown, but not in general.  This is important because there is often a distinction between regularity results for integro-differential operators with symmetric versus non-symmetric L\'evy measures, cf., e.g. \cite{CaSi-09RegularityIntegroDiff}, \cite{KassmannMimica-2013IntrinsicScalingArXiv}, \cite{KassRangSchwa-2013RegularityDirectionalINDIANA} vs. \cite{Chan-2012NonlocalDriftArxiv}, \cite{ChDa-2012NonsymKernels},   \cite{Silv-2006Holder}.
\end{rem}

\begin{rem}
	The reader can see, e.g. \cite[Section 4]{Hsu-1986ExcursionsReflectingBM}  for a similar representation in the linear case of Lemma \ref{lem:DtoNRepresentationDetails} as it pertains the stochastic processes which are a reflected brownian motion, and $\I$ generates the induced boundary process (basically the probabilistic presentation of the Dirichlet-to-Neumann operator).  
\end{rem}

\subsection{The Analysis of (\ref{eqIntro:NonlocalEpsilonEq}) Assuming The Inf-Sup Formula}\label{sec:AssumeInfSup}

We now develop the heuristics which lead to the analysis appearing in section \ref{sec:ThmProofs}.  There are some special features resulting from the translation invariant set-up of (\ref{eqIntro:MainStripDomain}) which allow for simplifications and a proof that does not rely on Theorem \ref{thm: nonlinear Courrege}.  After some of the issues regarding the structure of the L\'evy measures have been resolved (alluded to in Section \ref{sec:NonlinearCourrege}), we believe this method will useful in other contexts such as equations which have oscillations in $F(D^2u^\ep,x/\ep)$ as well as more general domains.

As mentioned in Section \ref{sec:Intro}, the heart of our proof is the analysis of an auxiliary homogenization problem given as
\begin{equation}\label{eqPerfectWorld:e1}
	\I^1(u^\ep|_{\Sigma_0},x) = g(\frac{x}{\ep})\ \text{in}\ \Sigma_0.
\end{equation}
Because $\I^1$ is a nonlocal operator, the strategy for resolving (\ref{eqPerfectWorld:e1}) requires treatment of the global values of test functions, not just local quantities such as the gradient or the Hessian.  Following \cite[Section 2.1]{Schw-10Per}, we explain the relevant corrector (or approximate corrector) equation as it pertains to (\ref{eqPerfectWorld:e1}).

Given the positive 1-homogeneity of $F$, it is straightforward to see that the nonlocal operator of (\ref{eqPerfectWorld:e1}) should have a scaling exponent of $1$.  This is almost true, but the scaling is corrupted slightly due to the fact that $\I^1$ is influenced by the zero Dirichlet condition on $\Sigma_1$.  However, the equation has a scaling of $1$ in the sense that the correct expansion for (\ref{eqPerfectWorld:e1}) uses rescaling of the form 
\[
v\mapsto \ep v(\frac{\cdot}{\ep}).
\]
Therefore, in order to identify an effective operator for (\ref{eqPerfectWorld:e1}) it will be necessary to study for all smooth test functions, $\phi$, a way to balance the oscillations in
\[
\I^1(\phi + \ep v(\frac{\cdot}{\ep}),x) = g(\frac{x}{\ep}).
\]
Now we can use the inf-sup representation of $\I^1$ from Lemma \ref{lem:DtoNRepresentationDetails}-- (\ref{eqPerfectWorldI1Rep})-- to simplify how $\I^1$ acts on functions of the form $\phi + \ep v(\frac{\cdot}{\ep})$.
\begin{align*}
	&\I^1(\phi + \ep v(\frac{\cdot}{\ep}),x) = \\
	&\ \ -\phi(x) - \ep v(\frac{x}{\ep})\\
	&\ \  + \inf_a\sup_b\{ (B^{ab}, \nabla \phi(x)) + \int_{\Sigma_0}(\phi(x+h)-\phi(x)-(\nabla \phi(x),h)\ind_{B_1(0)}(h))\mu^{ab}(dh)\\
	&\ \ + (B^{ab}, \nabla v(\frac{x}{\ep})) + \int_{\Sigma_0}(\ep v(\frac{x+h}{\ep})-\ep v(\frac{x}{\ep})-(\nabla v(\frac{x}{\ep}),h)\ind_{B_1(0)}(h))\mu^{ab}(dh) \}.
\end{align*}
In order to streamline presentation, we introduce a couple of operators:
\[
I^{ab}(u,x) = -u(x) + (B^{ab}, \grad u(x) ) + \int_{\Sigma_0}(u (x+h)-u(x)-(\nabla u(x),h)\ind_{B_1(0)}(h))\mu^{ab}(dh)
\]
and
\[
I^{ab}_\ep(u,x) = -u(x) + (B^{ab}, \grad u(x) ) + \int_{\Sigma_0}(u (x+h)-u(x)-(\nabla u(x),h)\ind_{B_{1/\ep}(0)}(h))\mu^{ab}(\ep^{-1}dh).
\]
We must investigate (after changing variables in the second integration, which applies to $v$!)
\[
\inf_a\sup_b\left\{ I^{ab}(\phi,x) + I^{ab}_\ep(v,\frac{x}{\ep})   \right\} = g(\frac{x}{\ep}). 
\]
A significant simplification arises in this asymptotic problem because $I^{ab}$ is continuous with respect to the $C^{1,\gam}$ norm.  We can therefore effectively separate variables and freeze $x=x_0$, while $y=x/\ep$ is the variable of interest.  This culminates in the search for a unique constant $\lam$ such that there is a classical solution, $v^\ep$ of the equation (which still depends on $\phi$, but in a fixed way)
\begin{equation}\label{eqPerfectWorld:Corrector}
\inf_a\sup_b\left\{ I^{ab}(\phi,x_0) + I^{ab}_\ep(v^\ep,y)   \right\} = g(y) + \lam.
\end{equation}
We emphasize that
$I^{ab}(\phi,x_0)$
are constant with respect to $\ep$ and $y$.  This $v^\ep$ should be the ``corrector'' which balances the equation in $\ep$ near $x_0$ for the global behavior of $\phi$.

More generally one expects that $I^{ab}(\phi,x_0)$ will contribute a term which is a uniformly bounded and uniformly continuous function of $y$ (see \cite[Section 2.1]{Schw-10Per}).

The methods typically used for analyzing the corrector equation, (\ref{eqPerfectWorld:Corrector}), require both existence/uniqueness theory and $C^{\tilde\gam}_{loc}(\Sigma_0)$ regularity results which depend only on universal parameters such as ``ellipticity'' and $L^\infty$ bounds (in the context of integro-differential equations, ``ellipticity'' is not so obviously defined as in the second order theory).  This is the place where our heuristics must stop because without further information on $\mu^{ab}$, we do not know if such results exist. The validity of regularity results for operators-- such as $\I^1$-- realized via Theorem \ref{thm: nonlinear Courrege} is an important and difficult open question.  The existence and uniqueness is not a problem because in the special case treated here, $I^{ab}_\ep$ are translation invariant (see e.g. \cite[Sections 3, 4, 5]{CaSi-09RegularityIntegroDiff}), but in more interesting contexts they are not expected to be.  However, the $C^{\tilde\gam}$ estimates are delicate, and depend on fine properties of the extremal operators, $M^{1,\pm}$, in Lemma \ref{lemSetup:DtoNExtremals}.  This requires one to know some specific upper and lower bounds on $\mu^{ab}$ which are uniform in $a,b$ as well as fit into the existing theory, which can have significantly different assumptions on the bounds for these measures (cf. \cite{CaSi-09RegularityIntegroDiff} vs.  \cite{Chan-2012NonlocalDriftArxiv}, \cite{GuSc-12ABParma}, or \cite{KassRangSchwa-2013RegularityDirectionalINDIANA}).

%%%%%%%%%%%%%%%%%%%%%%%%%%%%%%%%%%%%%%%%%%%%%%
%%%%%%%%%%%%%%%%%%%%%%%%%%%%%%%%%%%%%%%%%%%%%%
%%%%%%%%%%%%%%%%%%%%%%%%%%%%%%%%%%%%%%%%%%%%%%
%%%%%%%%%%%%%%%%%%%%%%%%%%%%%%%%%%%%%%%%%%%%%%
%%%%%%%%%%%%%%%%%%%%%%%%%%%%%%%%%%%%%%%%%%%%%%
\section{ The Proofs of Theorems \ref{thm:HomogMain} and \ref{thm:HomogNonlocalBoundaryEq}}\label{sec:ThmProofs}

%%%%%%%%%%%%%%%%%%%%%%%%%%%%%%%%%%%%%%%%%%%%%%
%%%%%%%%%%%%%%%%%%%%%%%%%%%%%%%%%%%%%%%%%%%%%%
\subsection{Homogenization of $u^\ep$ in $\Bar\Sigma$ (proof of Theorem \ref{thm:HomogMain})}\label{sec:ProofThmMain}

We first provide a proof of Theorem \ref{thm:HomogMain}, assuming Theorem \ref{thm:HomogNonlocalBoundaryEq} is true.  

\begin{proof}[Proof of Theorem \ref{thm:HomogMain}]
	We begin by collecting some facts about $u^\ep$.  Firstly, since $g\in C^\gam(\Sigma_0)$, it follows from \cite[Theorem 8.2]{MiSi-2006NeumannRegularity} (see also Theorem \ref{thm:NonlocalMilakisSilvestreGlobalHolderAndC1gam}) that $u^\ep\in C^{1,\tilde\gam}$ for some $\tilde\gam>0$ possibly smaller than $\gam$, but depending only on universal parameters-- note, $[\grad u^\ep]_{C^{\tilde\gam}}$ does depend on $\ep$ through $g(\cdot/\ep)$. This says that $u^\ep$ attains its normal derivative continuously and classically in (\ref{eqIntro:MainStripDomain}), see Lemma \ref{lem:NeumannVSisClassical}. Furthermore, it also holds that $\norm{u^\ep}_{C^{\tilde\gam}(\bar\Om)}\leq C(\Om)$ independently of $\ep$, for any $\Om\subset\subset \bar \Sigma^{1/2}$ (by \cite[Theorem 8.1]{MiSi-2006NeumannRegularity}, see also Theorem \ref{thm: MilakisSilvestre}).

	Thus if $U^\ep$ is the unique solution of (\ref{eqIntro:Dirichlet}) such that $U^\ep= u^\ep|_{\Sigma_0}$ on $\Sigma_0$, then uniqueness of viscosity solutions tells us that in fact $U^\ep$ solves (\ref{eqIntro:MainStripDomain})-- since the normal derivatives are attained classically-- and hence $U^\ep=u^\ep$ in all of $\Bar\Sigma^1$.  By Theorem \ref{thm:HomogNonlocalBoundaryEq}, $u^\ep|_{\Sigma_0}\to c$ uniformly where $c=\bar\I(0)$. By the stability of (\ref{eqIntro:Dirichlet}) with respect to uniform convergence of boundary data, it follows that $U^\ep\to \Bar U$ uniformly in $\Bar\Sigma$, where $\Bar U$ solves (\ref{eqIntro:Dirichlet}) with Dirichlet date given by the constant $c$ on $\Sigma_0$.
	
	Since $U^\ep=u^\ep$ and (\ref{eqIntro:Dirichlet}) with constant boundary (Dirichlet) data has an explicit unique solution, we conclude that $u^\ep\to l_c$ uniformly in $\bar\Sigma^1$, where $l_c$ is the affine function
	\begin{equation*}
		l_c(X) = c(1-X\cdot\nu).
	\end{equation*}
	Hence the effective Neumann condition is (since $c=\bar\I(0)$)
	\begin{equation*}
		\bar g(\nu) = \partial_\nu l_c = -\bar\I(0),
	\end{equation*}
	and this completes the proof.
\end{proof}

%%%%%%%%%%%%%%%%%%%%%%%%%%%%%%%%%%%%%%%%%%%%%%
%%%%%%%%%%%%%%%%%%%%%%%%%%%%%%%%%%%%%%%%%%%%%%
\subsection{Limit of $u^\ep$ on The Boundary $\Sigma_0$ (Proof of Theorem \ref{thm:HomogNonlocalBoundaryEq})}\label{sec:ProofThmBoundary}

Now we present the proof of Theorem \ref{thm:HomogNonlocalBoundaryEq}.

For the sake of notation when working in the boundary, $\Sigma_0$, we will call $v^\ep: \Sigma_0\to\real$
\begin{equation}\label{eqBoundary:UEpVEpDef}
	v^\ep:= u^\ep|_{\Sigma_0}.
\end{equation}
The corresponding global problem for $v^\ep$ then reads
\begin{equation}\label{eqBoundary:VEpEq}
	\I^1(v^\ep,x) = g(\frac{x}{\ep})\ \ \text{in}\ \ \Sigma_0.
\end{equation}
One last notation we will use is the function which is $v^\ep$ at the microscale:
\begin{equation}\label{eqBoundary:WEpDef}
	w^\ep(y):= \frac{1}{\ep} v^\ep (\ep y),
\end{equation}
which gives the unscaled equation for $w^\ep$
\begin{equation}\label{eqBoundary:WEpMicroscaleEq}
	\I^{1/\ep}(w^\ep,y) = g(y)\ \ \text{in}\ \ \Sigma_0.
\end{equation}

Equation (\ref{eqBoundary:VEpEq}) is an auxiliary homogenization problem which is posed on $\Sigma_0$ only.  It is precisely the feature of this set-up which is the core of our proof of Theorem \ref{thm:HomogNonlocalBoundaryEq}.  We will prove the existence and uniqueness of the constant, $\Bar \I(0)$ separately.  The existence is a consequence of the almost periodicity of $g$, and should be thought of as a nonlocal elliptic modification of the results of \cite{Ishii-2000AlmostPeriodicHJHomog}, which we present in Proposition \ref{prop:AlmostPeriodWEp}, Lemmas \ref{lem:IshiiEpWEpDecay}, \ref{lem:IshiiExistenceOfConst}.  The uniqueness of the constant is a consequence of the ``uniform ellipticity'' of $\I^r$ and appears in Lemma \ref{lem:UniquenessOfConst}.

The key lemma is basically a nonlocal version of the almost periodic arguments which appeared for Hamilton-Jacobi equations in \cite{Ishii-2000AlmostPeriodicHJHomog}.  There are however, many differences between the Hamilton-Jacobi setting and our nonlocal setting here.  

\begin{DEF}\label{def: epsilon-slmost period}
  (See also Remark \ref{rem:AlmostPeriodic}) Given $\phi \in C^{0}(\Sigma_0)$, $y\in \Sigma_0$ and $\delta>0$ we will say that $y$ is a $\delta$-almost period of $\phi$ if
  \begin{align*}
    \|\phi(\cdot+y)-\phi(\cdot)\|_{L^\infty(\Sigma_0)}<\delta	  
  \end{align*}	  
\end{DEF}

\begin{prop}\label{prop:AlmostPeriodWEp}
  Every $\delta$-almost period for $g$ is a $\delta$-almost period for $\ep w^\ep$.
\end{prop}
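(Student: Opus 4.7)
The strategy is to propagate the $\delta$-control on $g$ to a $\delta$-control on $\ep w^\ep$ by combining three structural properties of the operator $\I^{1/\ep}$ that are already in hand: translation invariance, the constant-shift formula, and the comparison principle. The scaling $\I^r(\phi+c,\cdot) = \I^r(\phi,\cdot) - r^{-1}c$ read at $r = 1/\ep$ is the mechanism that will exactly match the ``$\delta \mapsto \delta$'' statement claimed: a shift of size $\delta'/\ep$ in the function produces a shift of size $\delta'$ in the image, which is exactly what converts the $L^\infty$ control on $g$ into an $L^\infty$ control on $\ep w^\ep$.

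Let $\tau \in \Sigma_0$ be a $\delta$-almost period of $g$, so that $\delta' := \|g(\cdot+\tau) - g\|_{L^\infty(\Sigma_0)} < \delta$, and set $\tilde w^\ep(y) := w^\ep(y+\tau)$. By the translation invariance of $\I^{1/\ep}$ established in Lemma \ref{lem:TranslationInvariant},
\[ \I^{1/\ep}(\tilde w^\ep, y) = \I^{1/\ep}(w^\ep, y+\tau) = g(y+\tau). \]
Using Lemma \ref{lem:AlVAladdConstants} at scale $r=1/\ep$, one has $\I^{1/\ep}(w^\ep \pm \delta'/\ep, y) = g(y) \mp \delta'$; combined with the bracketing $g(y)-\delta' \leq g(y+\tau) \leq g(y)+\delta'$ this yields the sandwich
\[ \I^{1/\ep}(w^\ep + \delta'/\ep, y) \leq \I^{1/\ep}(\tilde w^\ep, y) \leq \I^{1/\ep}(w^\ep - \delta'/\ep, y). \]
Two applications of the classical comparison principle (Lemma \ref{lem:ClassicalComparisonForIOperator}) then force $w^\ep - \delta'/\ep \leq \tilde w^\ep \leq w^\ep + \delta'/\ep$ on $\Sigma_0$, i.e.\ $|w^\ep(\cdot+\tau) - w^\ep| \leq \delta'/\ep$. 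Multiplying by $\ep$ gives $\|\ep w^\ep(\cdot+\tau) - \ep w^\ep\|_{L^\infty(\Sigma_0)} \leq \delta' < \delta$, as required.

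The proof encounters no substantive obstacle: the only technical point to check is that $\I^{1/\ep}$ is classically defined on $w^\ep$ together with its translate $\tilde w^\ep$ and its additive shifts, so that Lemmas \ref{lem:AlVAladdConstants} and \ref{lem:ClassicalComparisonForIOperator} apply. Boundedness of $w^\ep$ is furnished by Lemma \ref{lem:GlobalEpLevelBounds}, while the $C^{1,\tilde\gam}$ regularity of $v^\ep$ (and hence of $w^\ep$) follows from the Milakis--Silvestre estimates already invoked in the proof of Theorem \ref{thm:HomogMain}. The key conceptual point is that the response of $\I^{1/\ep}$ to additive constants is precisely the factor $\ep$, which is exactly what is needed to translate the oscillation bound on $g$ into an oscillation bound on $\ep w^\ep$ at the same level $\delta$.
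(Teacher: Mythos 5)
Your argument is correct and follows the same route as the paper: the paper simply cites Lemma \ref{lem:RHSComparison} to move from the translated equation $\I^{1/\ep}(\tilde w^\ep,y) = g(y+\tau)$ to the bound $\|\tilde w^\ep - w^\ep\|_{L^\infty} \leq \ep^{-1}\|g(\cdot+\tau)-g\|_{L^\infty}$, whereas you unfold that lemma's inner mechanism (the constant-shift formula from Lemma \ref{lem:AlVAladdConstants} plus two applications of the comparison principle, Lemma \ref{lem:ClassicalComparisonForIOperator}) to obtain the same pointwise sandwich $w^\ep - \delta'/\ep \leq \tilde w^\ep \leq w^\ep + \delta'/\ep$. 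Your version is in fact slightly more self-contained, since the stated conclusion of Lemma \ref{lem:RHSComparison} only bounds $c_1-c_2$ by $\sup|w_1-w_2|$, and one needs the stronger pointwise estimate that its proof establishes (applied in both directions) to close the argument — exactly what you wrote out.
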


\begin{proof}
  The function $\tilde w^\ep(y):= w^\ep(y+\tau)$ solves the equation
  \begin{align*}
    \I^{1/\ep}(\tilde w^\ep,y) = g(y+\tau)\;\;\forall\;y\in\Sigma_0.
  \end{align*}
  Then, Lemma \ref{lem:RHSComparison} says that 
  \begin{align*}
    \|\tilde w^\ep - w^\ep\|_{L^\infty(\Sigma_0)} \leq \ep^{-1}\|g(\cdot+\tau)-g(\cdot)\|_{L^\infty(\Sigma)}.	  
  \end{align*}	  
  Since $\tau$ is a $\delta$-period for $g$, this means that $|w^\ep(y+\tau)-w^\ep(y)|\leq \ep^{-1}\delta$ for all $y\in\Sigma_0$.
  
\end{proof}

\begin{lem}[Nonlocal Elliptic Version of Ishii \cite{Ishii-2000AlmostPeriodicHJHomog}]\label{lem:IshiiEpWEpDecay}
	$w^\ep$ from (\ref{eqBoundary:WEpMicroscaleEq}) satisfies the decay
	\begin{equation}\label{BoundaryEq:EpWEpDecay}
		\norm{\ep w^\ep - \ep w^\ep(0)}_{L^\infty(\Sigma_0)} \to 0\ \ \text{as}\ \ \ep\to0.
	\end{equation}
\end{lem}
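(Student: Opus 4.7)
The plan is to transfer the almost-periodic structure of $g$ onto $\ep w^\ep$ via Proposition~\ref{prop:AlmostPeriodWEp}, then close the estimate using the uniform-in-$\ep$ boundary Hölder continuity of $v^\ep$. First I would collect two preliminary bounds. Lemma~\ref{lem:GlobalEpLevelBounds} applied to (\ref{eqBoundary:WEpMicroscaleEq}) with $r = 1/\ep$ gives the global bound $\|\ep w^\ep\|_{L^\infty(\Sigma_0)} \leq \|g\|_{L^\infty}$. Second, because $\ep w^\ep(y) = v^\ep(\ep y)$ and $v^\ep = u^\ep|_{\Sigma_0}$, the boundary Hölder estimate of \cite[Theorem~8.1]{MiSi-2006NeumannRegularity} already invoked in Section~\ref{sec:ProofThmMain}, together with the translation invariance of $F$, produces a constant $C$, independent of $\ep$, such that
\[
|\ep w^\ep(y_1) - \ep w^\ep(y_2)| \leq C\, \ep^{\tilde\gam}\, |y_1 - y_2|^{\tilde\gam} \quad \text{whenever}\ \ \ep y_1,\, \ep y_2 \in \bar B_1 \cap \Sigma_0.
\]

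Given $\delta > 0$, I would invoke Remark~\ref{rem:AlmostPeriodic}(ii) for $g$: this produces a compact set $K = K(\delta) \subset \Sigma_0$ such that, for every $y \in \Sigma_0$, one can pick $\tau = \tau_y \in E_\delta$ (a $\delta$-almost period of $g$) with $y + \tau \in K$. Set $R = R(\delta) := \sup_{k \in K}|k|$. By Proposition~\ref{prop:AlmostPeriodWEp} this $\tau$ is simultaneously a $\delta$-almost period of $\ep w^\ep$. The core triangle inequality then reads
\begin{align*}
|\ep w^\ep(y) - \ep w^\ep(0)|
&\leq |\ep w^\ep(y) - \ep w^\ep(y+\tau)| + |\ep w^\ep(y+\tau) - \ep w^\ep(0)| \\
&\leq \delta + C\, \ep^{\tilde\gam}\, R^{\tilde\gam},
\end{align*}
where the first term uses Proposition~\ref{prop:AlmostPeriodWEp} and the second the Hölder estimate applied to the points $\ep(y+\tau) \in \ep K$ and $0$, both of which lie in $\bar B_1 \cap \Sigma_0$ as soon as $\ep R \leq 1$. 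Taking $\ep$ small enough that also $C\,\ep^{\tilde\gam}\, R^{\tilde\gam} < \delta$ yields $\|\ep w^\ep - \ep w^\ep(0)\|_{L^\infty(\Sigma_0)} \leq 2\delta$; since $\delta$ was arbitrary, sending $\delta \to 0$ proves the lemma.

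The main obstacle is ensuring that the Hölder constant for $v^\ep$ is genuinely independent of $\ep$, despite the fact that the Neumann data $g(\cdot/\ep)$ blows up in $C^\gam$ as $\ep \to 0$. This is exactly where translation invariance of $F$ in (\ref{eqIntro:MainStripDomain}) is used: the Milakis--Silvestre estimate on any fixed ball sees $g(\cdot/\ep)$ only through its $L^\infty$ norm, which is $\ep$-independent. A second, milder, caveat is that $R(\delta) \to \infty$ as $\delta \to 0$, so the smallness threshold on $\ep$ worsens as $\delta$ shrinks; this is harmless since $\delta$ is chosen first and $\ep$ is sent to zero afterward.
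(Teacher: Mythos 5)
Your proof is correct and follows essentially the same route as the paper: transfer the almost periods of $g$ to $\ep w^\ep$ via Proposition~\ref{prop:AlmostPeriodWEp}, translate an arbitrary point into a fixed compact set using Remark~\ref{rem:AlmostPeriodic}(ii), and control the remaining oscillation through the $\ep$-uniform $C^{\tilde\gam}$ bound on $v^\ep$ supplied by Theorem~\ref{thm:NonlocalMilakisSilvestreGlobalHolderAndC1gam}(i) (whose right-hand side involves only $\|g\|_{L^\infty}$, not $[g(\cdot/\ep)]_{C^\gam}$). The paper phrases the argument with a nearly-extremizing sequence $(\ep_k,y_k)$ while you bound the supremum directly, but the decomposition and every key lemma are the same.
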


\begin{proof}[Proof of Lemma \ref{lem:IshiiEpWEpDecay}]
	Let $\{\varepsilon_k\}_{k}$ be a sequence such that $\varepsilon_k \to 0^+$, and let $\{ y_k\}_k $ be a sequence in $\Sigma_0$ such that for each $k$,
	\begin{align*}
	  |\varepsilon_k w^{\varepsilon_k}(y_k)-\varepsilon_k w^{\varepsilon_k}(0)|\geq \tfrac{1}{2}\| \varepsilon_k w^{\varepsilon_k}-\varepsilon_k w^{\varepsilon_k}(0)\|_{L^\infty(\Sigma_0)}.
	\end{align*}
	Let $\delta>0$ be given. Thanks to Remark \ref{rem:AlmostPeriodic}, there is some $R_\delta>0$ such that
	\begin{align*}  
	  \left ( z+ B_{R_\delta}\right ) \cap E_{\delta} \neq \emptyset\;\;\forall\;z\in \Sigma_0,
	\end{align*}
	where $E_\delta$ denotes the set of $\delta$-almost periods for $g$. According to Proposition \ref{prop:AlmostPeriodWEp}, every $\delta$-period for $g$ is also a $\delta$-period for each $\ep w^\ep$, $\varepsilon>0$.
	
	Taking $\varepsilon = \varepsilon_k$, $z=y_k$ above, it follows that for each $k$ there is some $\tau_k$ which is a $\del$-almost period for $\varepsilon_k w^{\varepsilon_k}$ and such that
	\begin{align*}
	  y_k-\tau_k \in B_{R_\delta}.
	\end{align*}
	In particular,
	\begin{align*}
	  |\varepsilon_k w^{\varepsilon_k}(y_k)-\varepsilon_k w^{\varepsilon_k}(0)| & \leq |\varepsilon_k w^{\varepsilon}(y_k)-\varepsilon_k w^{\varepsilon_k}(y_k-\tau_k)|+|\varepsilon_k w^{\varepsilon_k}(y_k-\tau_k)-\varepsilon_k w^{\varepsilon_k}(0)|.
	\end{align*}	
	Since $\tau_k$ is a $\del$-almost period for $\varepsilon_k w^{\varepsilon_k}$ the first quantity on the left is at most $\delta$, 
	\begin{align*}
	  |\varepsilon_k w^{\varepsilon_k}(y_k)-\varepsilon_k w^{\varepsilon_k}(0)| \leq \delta + \osc \limits_{B_{R_\delta}}\{\varepsilon_k w^{\varepsilon_k}\} ,\;\;\forall\; k>0.
	\end{align*}
	Next, note that
	\begin{align*}
	  \osc \limits_{B_{R_\delta}}\{ \varepsilon_k w^{\varepsilon_k} \} = \osc \limits_{B_{\varepsilon_k R_\delta}} \{ v^{\varepsilon_k}\}.
	\end{align*}
	Theorem \ref{thm:NonlocalMilakisSilvestreGlobalHolderAndC1gam} guarantees that there is some $\bar\gamma\in (0,1)$ such that the functions $v^{\varepsilon}$ are $C^{\bar \gamma}$-continuous in $B_1$, uniformly in $\varepsilon$. Therefore (for each fixed $\delta>0$),
	\begin{align*}
	  \lim \limits{\varepsilon \to 0^+}\osc \limits_{B_{\varepsilon R_\delta}} \{ v^{\varepsilon}\} = 0.	
	\end{align*}	
    Given that $\varepsilon_k \to 0$, thus for every large enough $k$ (this possibly depending on $\delta$) we have
	\begin{align*}
	  \tfrac{1}{2}\| \varepsilon_k w^{\varepsilon_k}-\varepsilon_k w^{\varepsilon_k}(0)\|_{L^\infty(\Sigma_0)} \leq |\varepsilon_k w^{\varepsilon_k}(y_k)-\varepsilon_k w^{\varepsilon_k}(0)| \leq 2\delta.   	
	\end{align*}
	That is (as the sequence $\varepsilon_k\to 0^+$ was arbitrary)
	\begin{align*}
	  \limsup \limits_{\varepsilon \to 0^+}	\| \varepsilon w^{\varepsilon}-\varepsilon w^{\varepsilon}(0)\|_{L^\infty(\Sigma_0)}\leq 4\delta,
    \end{align*}		
	letting $\del \to 0^+$, the lemma follows.
\end{proof}

\begin{lem}\label{lem:IshiiExistenceOfConst}
	Given any $\ep_j\to0$, there exists a subsequence, $\ep_j'$ such that $v^{\ep'_j}\to C$ uniformly on $\Sigma_0$, for some constant $C$.
\end{lem}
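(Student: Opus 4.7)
The plan is to extract the lemma essentially for free from Lemma \ref{lem:IshiiEpWEpDecay} combined with the uniform $L^\infty$ bound on $v^\ep$. The main point is that $v^\ep$ and $\ep w^\ep$ are the \emph{same} function up to rescaling in the domain: since $w^\ep(y)=\ep^{-1}v^\ep(\ep y)$, we have $v^\ep(x)=\ep w^\ep(x/\ep)$, and in particular
\begin{equation*}
\sup_{x\in\Sigma_0}\bigl|v^\ep(x)-v^\ep(0)\bigr| \;=\; \sup_{y\in\Sigma_0}\bigl|\ep w^\ep(y)-\ep w^\ep(0)\bigr|.
\end{equation*}
By Lemma \ref{lem:IshiiEpWEpDecay}, the right-hand side converges to zero as $\ep\to 0$, so
\begin{equation*}
\bigl\|v^\ep - v^\ep(0)\bigr\|_{L^\infty(\Sigma_0)} \longrightarrow 0.
\end{equation*}

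Next, I would use Lemma \ref{lem:GlobalEpLevelBounds} applied with $r=1$ to the equation $\I^1(v^\ep,\cdot)=g(\cdot/\ep)$ to conclude the uniform bound $\|v^\ep\|_{L^\infty(\Sigma_0)}\leq \|g\|_{L^\infty}$. In particular, the real numbers $\{v^\ep(0)\}_{\ep>0}$ form a bounded sequence in $\real$. Given any sequence $\ep_j\to 0$, Bolzano--Weierstrass furnishes a subsequence $\ep_j'\to 0$ such that $v^{\ep_j'}(0)\to C$ for some $C\in\real$ with $|C|\leq\|g\|_{L^\infty}$.

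Finally, I would combine the two ingredients: writing $v^{\ep_j'}(x)-C = \bigl(v^{\ep_j'}(x)-v^{\ep_j'}(0)\bigr)+\bigl(v^{\ep_j'}(0)-C\bigr)$ and taking the $L^\infty$ norm on $\Sigma_0$, each piece tends to zero, yielding $v^{\ep_j'}\to C$ uniformly on $\Sigma_0$. I do not anticipate any serious obstacle here, since the hard estimates (the decay of $\ep w^\ep-\ep w^\ep(0)$ via the almost periodicity argument, and the global $L^\infty$ bound for solutions of $\I^1(\,\cdot\,,x)=g$) have already been established in Lemmas \ref{lem:IshiiEpWEpDecay} and \ref{lem:GlobalEpLevelBounds} respectively; the present lemma is the straightforward repackaging of those facts.
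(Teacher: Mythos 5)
Your proof is correct and takes essentially the same route as the paper: both combine the decay estimate from Lemma \ref{lem:IshiiEpWEpDecay} (transferred to $v^\ep$ via the identity $v^\ep(x)=\ep w^\ep(x/\ep)$) with the uniform $L^\infty$ bound on $v^\ep$ and Bolzano--Weierstrass. You are a bit more explicit than the paper in attributing the uniform boundedness to Lemma \ref{lem:GlobalEpLevelBounds}, which is a fine clarification.
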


\begin{proof}[Proof of Lemma \ref{lem:IshiiExistenceOfConst}]
	By Theorem \ref{thm:NonlocalMilakisSilvestreGlobalHolderAndC1gam} we know that $v^\ep\in C^{\tilde\gam}(\Sigma_0)$ for some $0< \tilde\gam < 1$. Thus since $v^\ep$ are uniformly bounded, we can take some subsequence such that $v^{\ep_j}(0)\to C$.
	Furthermore, Lemma \ref{lem:IshiiEpWEpDecay} shows that
	\begin{equation*}
		\norm{v^{\ep_j'} - v^{\ep_j'}(0)}_{L^\infty(\Sigma_0)} \to 0\ \text{as}\ \ep\to0.
	\end{equation*}  
	Hence $v^{\ep_j'}\to C$ uniformly on $\Sigma_0$.   
\end{proof}

\begin{lem}\label{lem:UniquenessOfConst}
	The constant, $C$, of Lemma \ref{lem:IshiiExistenceOfConst} is independent of the sequence, $\ep_j$, and hence unique.
\end{lem}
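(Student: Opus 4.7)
The plan is to argue by contradiction. Suppose two sequences $\ep_j,\delta_k\to 0$ yield distinct uniform limits, $v^{\ep_j}\to C_1$ and $v^{\delta_k}\to C_2$, with $C_1>C_2$. By Lemma \ref{lem:AlVAladdConstants}, the shifted function $\tilde v^{\delta_k}:=v^{\delta_k}+(C_1-C_2)$ satisfies
\[
\I^1(\tilde v^{\delta_k},x)=g(x/\delta_k)-(C_1-C_2)\quad \text{in}\ \Sigma_0,
\]
and converges uniformly to $C_1$ on $\Sigma_0$. The idea is to apply Lemma \ref{lem:RHSComparison} with $r=1$, $w_1=v^{\ep_j}$, $w_2=\tilde v^{\delta_k}$, common inhomogeneity $h(x)=g(x/\ep_j)$, and the natural constants $c_1=0$ and $c_2=\sup_x[g(x/\delta_k)-g(x/\ep_j)]-(C_1-C_2)$. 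The bound produced by the lemma reads
\[
C_1-C_2\;\leq\;\sup_x\bigl[g(x/\delta_k)-g(x/\ep_j)\bigr]\;+\;\|v^{\ep_j}-\tilde v^{\delta_k}\|_{L^\infty(\Sigma_0)}.
\]
The last term vanishes as $j,k\to\infty$ by the assumed uniform convergence.

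The remaining task, which is the heart of the argument, is to show that along a well-chosen diagonal subsequence the first term on the right can also be driven to an arbitrarily small $\eta>0$. To this end, I would invoke the translation invariance of $\I^1$ (Lemma \ref{lem:TranslationInvariant}) to replace $\tilde v^{\delta_k}$ by $v^{\delta_k}(\cdot-s)+(C_1-C_2)$ for a shift $s=s_{j,k}\in\Sigma_0$, thereby replacing $g(x/\delta_k)$ by $g((x-s)/\delta_k)$ on the right-hand side. The shift $s_{j,k}$ would be chosen, via the $\eta$-almost-period property of Remark \ref{rem:AlmostPeriodic}(ii) and the inheritance statement of Proposition \ref{prop:AlmostPeriodWEp}, so that $g((\cdot-s_{j,k})/\delta_k)$ lies within $\eta$ of $g(\cdot/\ep_j)$ on an arbitrarily large ball, while the global bounds of Lemma \ref{lem:GlobalEpLevelBounds} (together with the uniform H\"older estimate used throughout the section) control the contribution from outside that ball.

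The main obstacle is precisely this synchronization step. Since $g(\cdot/\ep_j)$ and $g(\cdot/\delta_k)$ oscillate at incompatible scales, no fixed shift in $\Sigma_0$ aligns them on all of $\Sigma_0$; only the almost-periodicity of $g$ combined with a careful diagonal extraction (for example, selecting pairs with $\ep_j/\delta_k\to 1$) can bring them arbitrarily close on arbitrarily large regions. Once this synchronization is arranged, Lemma \ref{lem:RHSComparison} yields $C_1-C_2\leq \eta$ for every $\eta>0$, and a symmetric application with the roles of the sequences reversed yields $C_2-C_1\leq 0$. Together these force $C_1=C_2$, contradicting the standing assumption $C_1>C_2$ and proving the lemma.
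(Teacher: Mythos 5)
Your proposal correctly identifies Lemma \ref{lem:RHSComparison} as the key comparison tool, but the way you set it up creates an obstacle that the paper deliberately avoids, and I do not believe the synchronization step you sketch can be made to work.

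Working at the macroscale with $r=1$, both $v^{\ep_j}$ and $\tilde v^{\delta_k}$ solve equations with \emph{different} right-hand sides, $g(\cdot/\ep_j)$ and $g(\cdot/\delta_k)$, and your bound contains the term $\sup_x\bigl[g(x/\delta_k)-g(x/\ep_j)\bigr]$. This quantity is generically of order $\osc(g)$ and does not tend to zero along any subsequence: the two rescaled copies of $g$ oscillate at incommensurate scales, so for any $x$ where $g(\cdot/\ep_j)$ is near its minimum one can typically find nearby values where $g(\cdot/\delta_k)$ is near its maximum. A single shift $s_{j,k}$ cannot align them over all of $\Sigma_0$; at best it does so on a bounded set, while Lemma \ref{lem:RHSComparison} is a \emph{global} comparison with no localized analogue in the paper, and Lemma \ref{lem:GlobalEpLevelBounds} only bounds the solution, not the mismatch of the data outside a ball. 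Choosing pairs with $\ep_j/\delta_k\to 1$ does not help either: writing $\ep_j=\delta_k(1+\eta)$, the functions $g(x/\ep_j)$ and $g(x/\delta_k)$ drift apart once $|x|\gtrsim \delta_k/\eta$, so the sup over $\Sigma_0$ remains large.

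The paper circumvents all of this by passing to the microscale variable $w^\ep(y)=\ep^{-1}v^\ep(\ep y)$, which via Lemma \ref{lem:Scaling} solves $\I^{1/\ep}(w^\ep,y)=g(y)$ with the \emph{same} right-hand side $g(y)$ for every $\ep$. There is then nothing to synchronize. The scale difference is instead absorbed into the operator, and the key extra ingredient is Lemma \ref{lem:IrComparison} (monotonicity of $\I^r$ in $r$ on non-negative functions): after normalizing $\hat w^{\ep_j}=w^{\ep_j}-\ep_j^{-1}c_1$ and $\hat w^{\ep_k}=w^{\ep_k}-\ep_k^{-1}c_2$, shifting both up to make them non-negative, and assuming $\ep_j<\ep_k$, one replaces $\I^{1/\ep_k}$ by $\I^{1/\ep_j}$ on the $\hat w^{\ep_k}$ side, and then applies Lemma \ref{lem:RHSComparison} at the common level $r=1/\ep_j$. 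The resulting bound is $c_2-c_1\le 2\ep_k\|\hat w^{\ep_k}\|+2\ep_j\|\hat w^{\ep_j}\|$, which tends to zero since $\ep\|\hat w^\ep\|=\|v^\ep-c\|\to 0$. To repair your argument, you should abandon the macroscale comparison and make this change of variables; the monotonicity lemma (which your proposal never invokes) is essential.
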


\begin{proof}[Proof of Lemma \ref{lem:UniquenessOfConst}]
	We first point out that this is another location in the proofs where the residual effect of the Dirichlet condition on $\Sigma_{1/\ep}$ is present in the operator $\I^{1/\ep}$ and causes unnecessary difficulty.  In Section \ref{sec:PerfectWorld} these difficulties would not be present.
	
	Let $c_1$ and $c_2$ be constants such that there are sequences $v^{\ep_j}\to c_1$ and $v^{\ep_k}\to c_2$ uniformly on $\Sigma_0$.  We will establish that
	\[
	c_2\leq c_1,
	\]  
	and since the sequences were arbitrary, this proves the lemma.
	If we rewrite $v^{\ep_j}$ and $v^{\ep_k}$ in the microscale variables, this says that (recall $w^\ep$ from (\ref{eqBoundary:WEpDef}))
	\begin{equation*}
		\ep_j w^{\ep_j} \to c_1\ \ \text{and}\ \ \ep_k w^{\ep_k}\to c_2\ \ \text{uniformly on}\ \Sigma_0.
	\end{equation*}
	We will also define the functions 
	\begin{equation*}
		 \hat w^{\ep_j}= w^{\ep_j} - \frac{1}{\ep_j}c_1\ \ \text{and}\ \ \hat w^{\ep_k} = w^{\ep_k} - \frac{1}{\ep_k}c_2.
	\end{equation*}
	In anticipation of applying Lemma \ref{lem:IrComparison}, we need to make sure that $\hat w^{\ep_j}$ and $\hat w^{\ep_k}$ are non-negative.  We do so by shifting them up by respectively $\del_j$, $\del_k$ where
	\begin{equation*}
		\del_j =\norm{\hat w^{\ep_j}}\ \ \text{and}\ \ \del_k=\norm{\hat w^{\ep_k}},
	\end{equation*}
	which gives
	\begin{equation*}
		\hat w^{\ep_j}+\del_j\geq0\ \text{and}\ \hat w^{\ep_k}+\del_k\geq0.
	\end{equation*}
	
	We will assume without loss of generality that $j$ and $k$ are such that $\ep_j<\ep_k$, which is not a problem since $j$ and $k$ can otherwise be chosen independently of one another.  Using the equations for $w^{\ep_j}$ and $w^{\ep_k}$, we see that from Lemmas \ref{lem:AlVAladdConstants} and \ref{lem:IrComparison}
	\begin{align*}
		-\ep_k\del_k + c_2 + g(y) &= \I^{1/\ep_k}(\hat w^{\ep_k}+\del_k,y)\\
		&\leq \I^{1/\ep_j}(\hat w^{\ep_k} + \del_k,y). 
	\end{align*}
	But on the other hand,
	\begin{align}
		-\ep_j\del_j+ c_1 + g(y) &= \I^{1/\ep_j}(\hat w^{\ep_j}+\del_j,y).
	\end{align}
	Thus Lemma \ref{lem:RHSComparison} tells us that
	\begin{align*}
		(-\ep_k\del_k + c_2) - (-\ep_j\del_j + c_1) &\leq \ep_j \sup_{\Sigma_0}((\hat w^{\ep_k}+\del_k) - (\hat w^{\ep_j}+\del_j)).
	\end{align*}
	Hence
	\begin{align}
		c_2 - c_1 &\leq \ep_j\sup_{\Sigma_0} \hat w^{\ep_k} - \ep_j\inf_{\Sigma_0} \hat w^{\ep_j} +\ep_j\del_k - \ep_j\del_j + \ep_k\del_k-\ep_j\del_j \nonumber\\
		 &\leq \ep_j \norm{\hat w^{\ep_k}}_{L^\infty(\Sigma_0)} + \ep_j\norm{\hat w^{\ep_j}}_{L^\infty(\Sigma_0)} + \ep_j\del_k + \ep_k\del_k  \nonumber\\
		 &\leq 2\ep_k \norm{\hat w^{\ep_k}}_{L^\infty(\Sigma_0)} + 2\ep_j\norm{\hat w^{\ep_j}}_{L^\infty(\Sigma_0)}, \label{eqBoundary:Lem4.5e1}
	\end{align}
	where we have used both $\ep_j\del_j\geq0$ and $\ep_j<\ep_k$.  Now, we preserve $\ep_j<\ep_k$ and allow $j\to\infty$ followed by $k\to\infty$.  By construction of $\hat w^{\ep_j}$ and $\hat w^{\ep_k}$, we have
	\[
	\ep_k \norm{\hat w^{\ep_k}}_{L^\infty(\Sigma_0)}\to 0\ \ \text{and}\ \ \ep_j \norm{\hat w^{\ep_j}}_{L^\infty(\Sigma_0)}\to0.
	\]
	Hence $c_2\leq c_1$.  Reversing the roles of $c_1$ and $c_2$ finishes the lemma.

\end{proof}

%%%%%%%%%%%%%%%%%%%%%%%%%%%%%%%%%%%%%%%%%%%%%%
%%%%%%%%%%%%%%%%%%%%%%%%%%%%%%%%%%%%%%%%%%%%%%
%%%%%%%%%%%%%%%%%%%%%%%%%%%%%%%%%%%%%%%%%%%%%%
%%%%%%%%%%%%%%%%%%%%%%%%%%%%%%%%%%%%%%%%%%%%%%
%%%%%%%%%%%%%%%%%%%%%%%%%%%%%%%%%%%%%%%%%%%%%%

\appendix

%%%%%%%%%%%%%%%%%%%%%%%%%%%%%%%%%%%%%%%%%%%%%%
%%%%%%%%%%%%%%%%%%%%%%%%%%%%%%%%%%%%%%%%%%%%%%
%%%%%%%%%%%%%%%%%%%%%%%%%%%%%%%%%%%%%%%%%%%%%%
%%%%%%%%%%%%%%%%%%%%%%%%%%%%%%%%%%%%%%%%%%%%%%
%%%%%%%%%%%%%%%%%%%%%%%%%%%%%%%%%%%%%%%%%%%%%%

\section{Existence and Uniqueness of Viscosity Solutions}\label{sec:ExAndUn}

In this section we collect a few well known facts regarding existence and uniqueness for viscosity solutions of uniformly elliptic problems with either Dirichlet or Neumann boundary conditions.  We point out the sign convention we use is that of \cite{CaCa-95}, which differs by a sign from that of e.g. \cite{CrIsLi-92}.

We let $G$ be a generic fully nonlinear operator (as $F$ was already used).  It could be $F$ or $\M^\pm$, or other examples used in this note. We are concerned with two types of boundary value problems. The first is the Dirichlet problem,

\begin{equation}\label{eqAppend:Dirichlet}
	\left \{ \begin{array}{rll}
		G(D^2W) & = 0 \ &\text{in}\ \Sigma^{r},\\
		W & = 0\ &\text{on}\ \Sigma_{r},\\	
		W & = \phi\ &\text{on}\ \Sigma_0.		
	\end{array}\right.
\end{equation}
The second is the Neumann Problem,
\begin{equation}\label{eqAppend:Neumann}
	\left \{ \begin{array}{rll}
		G(D^2W) & = 0 \ &\text{in}\ \Sigma^{r},\\
		W & = 0\ &\text{on}\ \Sigma_{r},\\	
		\partial_\nu W & = h\ &\text{on}\ \Sigma_0.		
	\end{array}\right.
\end{equation}

\begin{thm}\cite[Theorems VI.3, VI.5]{IshiiLions-1990ViscositySolutions2ndOrder}\label{thm:IshiiLionsExUnDirichletAndNeumann}
	If $G$ is uniformly elliptic as in (\ref{eqSetUp:EllipticF}) and $h$ and $\phi$ are both continuous on $\Sigma_0$, then both (\ref{eqAppend:Dirichlet}) and (\ref{eqAppend:Neumann}) have existence of unique viscosity solutions.
\end{thm}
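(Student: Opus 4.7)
My approach follows the standard Ishii--Lions viscosity-solution framework and splits into comparison (which yields uniqueness) and existence by Perron's method. Throughout, the two main technical complications are that $\Sigma^r$ is unbounded in the tangential directions and that the Neumann condition on $\Sigma_0$ requires nontrivial modifications of the doubling-of-variables penalty.

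For the Dirichlet comparison, I would argue by contradiction: assume $U$ is a bounded USC subsolution and $V$ a bounded LSC supersolution of \eqref{eqAppend:Dirichlet} with $U\leq V$ on $\partial\Sigma^r$ but $\sup_{\Sigma^r}(U-V)=\delta>0$. I would first localize using the auxiliary function $\phi_R$ from \eqref{eqSetUp:Phi_Psi_def} as in the proof of Lemma~\ref{lem:ClassicalComparisonForIOperator}, so that the maximum of $U(x)-V(x)-2\delta\phi_R(x)$ is attained at an interior point for $R$ large. Then I would double the variables with the penalty $\alpha|x-y|^2/2$ and apply the Jensen--Ishii lemma to produce matrices $X\leq Y$ with $G(X)\geq 0 \geq G(Y)$; uniform ellipticity \eqref{eqSetUp:EllipticF} gives $0\leq G(X)-G(Y)\leq \M^+(X-Y)\leq 0$. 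Sending $\alpha\to\infty$ and $R\to\infty$ contradicts $\delta>0$.

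For the Neumann comparison, the penalty must be supplemented so that contact points are pushed off $\Sigma_0$, where the Neumann inequality rather than the interior equation applies. Following Ishii's oblique-derivative technique, I would augment the doubled penalty with a boundary-adapted term such as $K\chi(x_{d+1})+K\chi(y_{d+1})$ where $\chi(t)=t-t^2/(2r)$, $\chi'(0)=1$; if a penalized maximum nevertheless lay on $\Sigma_0$, the Neumann inequalities $\partial_\nu U\leq h\leq \partial_\nu V$ combined with $K\chi'(0)>0$ yield an immediate contradiction. This forces the contact point into $\Sigma^r$, reducing matters to the interior Dirichlet-type argument above. For existence, I would apply Perron's method. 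Explicit barriers are easy to produce on the strip: in the Dirichlet case, functions of the form $\phi(x)\bigl(1-x_{d+1}/r\bigr)\pm C\,x_{d+1}(r-x_{d+1})$ with $C$ chosen from $\|\phi\|_{C^2}$ and the Pucci bounds \eqref{eqSetUp:PucciMin}--\eqref{eqSetUp:PucciMax} give $\M^\pm$-extremal sub/supersolutions attaining $\phi$ on $\Sigma_0$ and $0$ on $\Sigma_r$; in the Neumann case one replaces the linear factor so that its normal derivative at $\Sigma_0$ matches $h$, up to a sign and an additional quadratic correction. With sub-barrier $\underline{W}$ and super-barrier $\overline{W}$, the Perron candidate
\begin{equation*}
W(X) := \sup\bigl\{\,V(X)\;:\;V \text{ is a subsolution},\ \underline{W}\leq V\leq \overline{W}\,\bigr\}
\end{equation*}
has USC and LSC envelopes that are respectively sub- and supersolution (including in the Neumann sense on $\Sigma_0$ by the standard bump-argument), and comparison forces them to coincide with $W$, yielding the required solution.

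The hard part is the Neumann comparison on the unbounded strip: the localizer $\phi_R$ needed to kill the tangential noncompactness and the boundary penalty $K\chi(x_{d+1})$ needed to handle $\Sigma_0$ must be coordinated so that neither swamps the other when the viscosity inequalities are extracted. Concretely, one must choose $K$ depending on $\|h\|_\infty$ and the ellipticity constants, but independent of $R$ and of the doubling parameter $\alpha$, so that as $\alpha\to\infty$ and $R\to\infty$ the boundary test function still guarantees the contact point sits in the interior. Once this balance is arranged, the rest of the argument is routine Ishii--Lions theory, and both existence and uniqueness follow.
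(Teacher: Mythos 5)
The paper gives no proof of this statement; it is cited directly from Ishii--Lions \cite{IshiiLions-1990ViscositySolutions2ndOrder}. So the only question is whether your sketch is a plausible reconstruction of their argument, and broadly it is: doubling of variables with a tangential localizer to handle the noncompact strip, a boundary-adapted penalty for the Neumann condition, and Perron's method with explicit barriers are the right shape.

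Two points deserve care. First, the sign bookkeeping. Under the paper's orientation of $\nu$ (inward-pointing; see Proposition \ref{propAppend:VSNeumannCondition}), a subsolution $U$ touched from above by $\varphi$ at $x_0\in\Sigma_0$ satisfies $\partial_\nu\varphi(x_0)\geq h(x_0)$, not $\leq$. Correspondingly your boundary term $K\chi(x_{d+1})+K\chi(y_{d+1})$ must be \emph{added to} the doubled functional (not incorporated into the subtracted penalty); subtracting it makes $x_{d+1}$ small favorable and pushes the contact point \emph{onto} $\Sigma_0$, and the resulting $+K\chi'(0)$ term lands on the wrong side of the Neumann inequality, giving consistency rather than the contradiction you need. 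A careful pass would fix this, but as written the argument collapses at precisely the step you flag as delicate.

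The more substantive gap is in the existence step. Your barriers $\phi(x)\bigl(1-x_{d+1}/r\bigr)\pm C\,x_{d+1}(r-x_{d+1})$ require $\phi\in C^2$ (you pick $C$ from $\|\phi\|_{C^2}$), but the theorem assumes only that $\phi$ and $h$ are continuous. For merely continuous data the term $D^2\bigl[\phi(x)(1-x_{d+1}/r)\bigr]$ is uncontrolled and the Pucci inequality you want does not hold, so these are not barriers. You would need either an approximation argument (solve for $C^2$ data, use comparison together with the modulus of continuity of $\phi,h$ to pass to the limit) or cone-type barriers built from the modulus of continuity directly. Without one of these the Perron envelope has nothing to sit between, so the existence half of the theorem is not actually reached by the sketch as written.
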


In general viscosity solutions do not attain their boundary values in a classical way, and so one does not have simply the boundary inequalities in the definitions of viscosity sub / super solutions.  Rather, either the boundary condition holds OR the equation holds (see \cite[Section 7]{CrIsLi-92}, \cite[Section VI]{IshiiLions-1990ViscositySolutions2ndOrder}).  However in the \emph{uniformly elliptic} case, this strange behavior is not present, and the viscosity solution inequalities are exactly what one would expect (implicit, but not explained in \cite[Section 2]{MiSi-2006NeumannRegularity}).  We make this statement precise in the next proposition, which is basically a restatement of \cite[Proposition 7.11]{CrIsLi-92} in the simple context of (\ref{eqAppend:Neumann}).

\begin{prop}\label{propAppend:VSNeumannCondition}
	Assume $G$ is uniformly elliptic in the sense of (\ref{eqSetUp:EllipticF}) and that $h$ is continuous. If $W$ is a viscosity subsolution of (\ref{eqAppend:Neumann}) (in the sense of \cite[Section VI]{IshiiLions-1990ViscositySolutions2ndOrder}), then for all $\phi\in C^2(\bar \Sigma^r)$ which touch $W$ from above at $x_0\in\Sigma_0$, it holds that
	\[
	\partial_\nu \phi(x_0) \geq h(x_0).
	\]
	(That is $W$ satisfies the boundary condition in the strong sense \cite[Definition 7.1]{CrIsLi-92})
\end{prop}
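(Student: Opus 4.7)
\bigskip

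\noindent\textbf{Proof plan for Proposition \ref{propAppend:VSNeumannCondition}.}
The plan is to argue by contradiction: assume the conclusion fails, so there exist $\phi \in C^2(\bar\Sigma^r)$ and $x_0\in\Sigma_0$ with $\phi$ touching $W$ from above at $x_0$ but $\partial_\nu\phi(x_0) < h(x_0)$. The key mechanism is that uniform ellipticity allows one to bend $\phi$ down sharply in the normal direction (producing an arbitrarily negative value of $G(D^2\phi)$ at $x_0$) \emph{without destroying the touching from above}, while only nudging the normal derivative upward by an amount one can make as small as desired. This decoupling is precisely what forces the boundary inequality, rather than the equation inequality, to be the one that persists in the viscosity condition.

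Concretely, I would introduce coordinates $X=(x,s)$ with $s=X\cdot\nu\geq 0$ on $\bar\Sigma^r$, and define the perturbation
\begin{equation*}
\tilde\phi(X) := \phi(X) + a s - b s^2,
\end{equation*}
with parameters $a,b>0$ to be chosen. A direct computation gives $\partial_\nu\tilde\phi(x_0)=\partial_\nu\phi(x_0)+a$ and $D^2\tilde\phi(x_0)=D^2\phi(x_0)-2b\,\nu\otimes\nu$. Uniform ellipticity (\ref{eqSetUp:EllipticF}) then yields
\begin{equation*}
G(D^2\tilde\phi(x_0)) \leq G(D^2\phi(x_0)) + \M^+(-2b\,\nu\otimes\nu) = G(D^2\phi(x_0)) - 2b\lam.
\end{equation*}
Now I choose the parameters in two stages: first pick $a>0$ small enough that $\partial_\nu\phi(x_0)+a < h(x_0)$, which is possible by the assumed strict inequality; then pick $b$ large enough that $G(D^2\phi(x_0))-2b\lam < 0$, which is possible since $G(D^2\phi(x_0))$ is a fixed finite number.

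Next I verify $\tilde\phi$ still touches $W$ from above at $x_0$ in some neighborhood. Since $as-bs^2 = s(a-bs) \geq 0$ for $0\leq s\leq a/b$, and $\phi\geq W$ in a neighborhood of $x_0$ with equality at $x_0$, by shrinking the neighborhood so that $s<a/b$ we obtain $\tilde\phi\geq\phi\geq W$ there, with $\tilde\phi(x_0)=\phi(x_0)=W(x_0)$. Thus $\tilde\phi$ is an admissible test function from above for $W$ at $x_0$. Applying the viscosity subsolution condition (in the sense of \cite[Section VI]{IshiiLions-1990ViscositySolutions2ndOrder}) to $\tilde\phi$ at the boundary point $x_0$, either $G(D^2\tilde\phi(x_0))\geq 0$ or $\partial_\nu\tilde\phi(x_0)\geq h(x_0)$ must hold; but by construction both fail, giving the desired contradiction.

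I do not anticipate a serious obstacle: the argument is standard once the sign conventions are sorted out. The one bookkeeping point to be careful with is the sign of the Pucci bound used above -- that the perturbation matrix $-2b\,\nu\otimes\nu$ has a single negative eigenvalue $-2b$, so that $\M^+$ applied to it produces $-2b\lam$ (a strictly negative quantity), which is what makes it possible to drive $G(D^2\tilde\phi(x_0))$ strictly negative. No additional input beyond uniform ellipticity, the $C^2$ regularity of $\phi$, and continuity of $h$ is needed.
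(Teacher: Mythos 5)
Your proposal is correct and follows essentially the same approach as the paper's proof: perturb the test function by a function of the normal coordinate $s = X\cdot\nu$ that vanishes on $\Sigma_0$, contributes a small positive amount to the normal derivative, and contributes a large negative amount to $D^2$ in the $\nu$-direction, which by uniform ellipticity forces $G(D^2\tilde\phi(x_0)) < 0$ and hence forces the Neumann alternative in the viscosity inequality. The paper's auxiliary function $\psi$ equals $\lam s - \eta s^2$ near $\Sigma_0$ (i.e., your $as - bs^2$) and is merely glued to a constant further in; your presentation is slightly cleaner in arguing by contradiction with two independent parameter choices and in keeping the perturbation purely quadratic, while the paper argues directly, fixing $\eta$ and then sending $\lam\to 0$, but the mechanism and the use of the Pucci bound $\M^+(-2b\,\nu\otimes\nu)=-2b\lam$ are identical.
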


\begin{proof}[Proof of Proposition \ref{propAppend:VSNeumannCondition}]
	Because we are more or less presenting the argument of \cite[Proposition 7.11]{CrIsLi-92}, we only provide the main points.  The key idea is that we can modify the original test function, $\phi$, to a new text function, $\tilde\phi$, which again touches $W$ from above at $x_0$, yet will also be chosen to satisfy $G(D^2\tilde\phi(x_0))<0$.  The definition of viscosity subsolution (modified for our sign convention) requires that
	\begin{equation}\label{eqAppend:ViscInequalityTildePhi}
	\max\left(\partial_\nu \tilde\phi(x_0) - h(x_0),\ G(D^2\tilde\phi(x_0))     \right) \geq 0.
	\end{equation}
	Hence by construction we can force
	\[
	\partial_\nu \tilde\phi(x_0) - h(x_0) \geq 0,
	\]
	and we will also choose $\tilde \phi$ so that $\partial_\nu \tilde\phi$ is arbitrarily close to $\partial_\nu\phi$.
	
	We will use the function 
	\[
	\psi(X) = 
	\begin{cases}
		-\eta (x_{d+1}-1)^2 + (\lam - 2\eta) x_{d+1} + \eta\  &\text{if}\ x_{d+1}\leq \lam/(2\eta)\\
		\lam^2/(4\eta)\ &\text{if}\ x_{d+1} > \lam/(2\eta).
	\end{cases}
	\]
	Here $\lam>0$ and $\eta>0$ are arbitrary.
	Although $\psi$ is not in $C^2(\bar \Sigma^r)$, it is $C^2$ in a neighborhood of $\Sigma_0$, which is good enough.  The construction of $\psi$ shows that for 
	\[
	\tilde\phi(X) = \phi(X)+\psi(X),
	\]
	$\tilde\phi$ also touches $W$ from above at $x_0$.  Furthermore
	\[
	\partial_\nu\tilde\phi(x_0) = \partial_\nu \phi(x_0) + \lam\ \ \text{and}\ \ D^2\tilde\phi(x_0) = D^2\phi(x_0) -\eta \nu\otimes\nu,
	\]
	which guarantees that $\eta$ can be chosen depending only on $D^2\phi$ to give (via \ref{eqSetUp:EllipticF})
	\[
	G(D^2\tilde\phi(x_0)) < 0.
	\] 
	Thus we conclude from (\ref{eqAppend:ViscInequalityTildePhi}) that 
	\[
	\partial_\nu \phi(x_0) - h(x_0) \geq -\lam,
	\]
	and since $\lam>0$ was arbitrary, we conclude the proposition.
	
\end{proof}

\noindent
Proposition \ref{propAppend:VSNeumannCondition} is useful because it tells us that solutions of (\ref{eqAppend:Neumann}) with $C^\gam$ Neumann data will attain their Neumann data classically, which is important for the use of our nonlocal operators $\I^r$.

\begin{lem}\label{lem:NeumannVSisClassical}
	If $h\in C^\gam(\Sigma_0)$ and $W$ is the unique viscosity solution of (\ref{eqAppend:Neumann}) then $\partial_\nu W(x)=h(x)$ classically for all $x\in\Sigma_0$.
\end{lem}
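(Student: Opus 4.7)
My plan is to use the boundary $C^{1,\tilde\gam}$ regularity of $W$ together with a blow-up at $x_0\in\Sigma_0$, which converts the viscosity-sense Neumann condition for $W$ into the analogous condition for its tangent plane at $x_0$; the latter is smooth, so Proposition \ref{propAppend:VSNeumannCondition} can be read off classically for the limit.

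First, since $h\in C^\gam(\Sigma_0)$ and $G$ is uniformly elliptic, Theorem \ref{thm:NonlocalMilakisSilvestreGlobalHolderAndC1gam} supplies some $\tilde\gam\in(0,1)$ (depending only on universal constants) and a neighborhood of $x_0$ relative to $\bar\Sigma^r$ in which $W\in C^{1,\tilde\gam}$. In particular $\partial_\nu W(x_0)=\nabla W(x_0)\cdot\nu$ exists classically and
\[
|W(X)-W(x_0)-\nabla W(x_0)\cdot(X-x_0)|\le C|X-x_0|^{1+\tilde\gam}.
\]
One does not expect to construct a $C^2$ test function touching $W$ from above (or below) at the boundary point $x_0$ in this merely $C^{1,\tilde\gam}$ setting, so Proposition \ref{propAppend:VSNeumannCondition} cannot be applied directly to $W$; the blow-up sidesteps this obstruction by producing a smooth limit.

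Second, I would consider the rescalings
\[
U_\rho(X):=\frac{W(x_0+\rho X)-W(x_0)}{\rho},\qquad X\in\bar\Sigma^{r/\rho}.
\]
The uniform ellipticity of $G$ combined with the positive $1$-homogeneity of $\M^\pm$ makes $U_\rho$ a viscosity solution of $\M^-(D^2 U_\rho)\le 0\le\M^+(D^2 U_\rho)$ in $\Sigma^{r/\rho}$, with Neumann datum $h_\rho(X):=h(x_0+\rho X)$ on $\Sigma_0$. The $C^{1,\tilde\gam}$ estimate above gives
\[
|U_\rho(X)-L(X)|\le C\rho^{\tilde\gam}|X|^{1+\tilde\gam},\qquad L(X):=\nabla W(x_0)\cdot X,
\]
and $\nabla U_\rho\to\nabla L$ locally uniformly, while continuity of $h$ gives $h_\rho\to h(x_0)$ locally uniformly on $\Sigma_0$.

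Third, I would invoke the standard stability of viscosity sub/supersolutions under locally uniform convergence (Barles--Perthame type half-relaxed limits, as in \cite[Section VI]{IshiiLions-1990ViscositySolutions2ndOrder}): letting $\rho\downarrow 0$, the limit $L$ is a viscosity solution on the half space $\{X\cdot\nu>0\}$ of $\M^-(D^2L)\le 0\le\M^+(D^2L)$ (trivially, since $D^2L=0$) with Neumann datum equal to the constant $h(x_0)$ on $\Sigma_0$. Because $L$ is $C^\infty$ up to the boundary, Proposition \ref{propAppend:VSNeumannCondition}---whose proof is local near the boundary and uses only uniform ellipticity of the operator, hence adapts verbatim to $\M^\pm$ on the half space---forces $\partial_\nu L(0)\ge h(x_0)$ from the $\M^+$ subsolution side and $\partial_\nu L(0)\le h(x_0)$ from the $\M^-$ supersolution side. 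Combining with $\partial_\nu L(0)=\nabla W(x_0)\cdot\nu=\partial_\nu W(x_0)$ yields $\partial_\nu W(x_0)=h(x_0)$, as desired.

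The main delicate point is the stability step: one must verify that the viscosity Neumann condition survives the blow-up even though the domains $\bar\Sigma^{r/\rho}$ exhaust the half space. What makes this harmless is that Proposition \ref{propAppend:VSNeumannCondition} already guarantees each $U_\rho$ attains its Neumann datum in the strong sense, so only local test-function touching near $\Sigma_0$ matters; the receding far boundary $\Sigma_{r/\rho}$ plays no role in the local analysis at the origin, and a standard cut-off argument makes the half-relaxed limits theorem apply on any fixed half-ball.
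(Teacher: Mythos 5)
Your proof is correct but takes a genuinely different route from the paper's. The paper addresses the obstruction you identify (one cannot touch $W$ from above at a boundary point by a $C^2$ function when $W$ is only $C^{1,\tilde\gam}$) by a sup-convolution argument: uniform continuity of $W$ gives uniform convergence of its sup-convolutions, yielding a dense set of points with a second-order Taylor expansion from above; the resulting interior jets $(p_n,A_n)\in J^{2,+}_{\Sigma^r}W(X_n)$ with $p_n=DW(X_n)$ are then passed to a limiting jet in $\bar J^{2,+}_{\bar\Sigma^r}W(x_0)$ whose first-order part equals $DW(x_0)$, and Proposition \ref{propAppend:VSNeumannCondition} is invoked. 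You instead perform a blow-up at $x_0$: the $C^{1,\tilde\gam}$ estimate forces $U_\rho\to L$ locally uniformly with $L$ the tangent plane, the Pucci inequalities and the rescaled Neumann datum $h_\rho\to h(x_0)$ pass to the limit by stability (with the receding far boundary $\Sigma_{r/\rho}$ harmless for local touching near the origin), and then Proposition \ref{propAppend:VSNeumannCondition} is applied directly to the \emph{smooth} limit $L$, taking $\phi=L$ as the test function. Your approach has the virtue that the limit object is smooth, so the strong-sense boundary condition of Proposition \ref{propAppend:VSNeumannCondition} applies to it without any discussion of semiconvexity or jet closures, and the role of the $C^{1,\tilde\gam}$ regularity is made fully explicit through the blow-up rate. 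One minor correction: the $C^{1,\tilde\gam}(\bar\Sigma^r)$ regularity of $W$ should be cited from Theorem \ref{thm:MilakisSilvestreGlobalHolderAndC1gam}(ii) (or Theorem \ref{thm: MilakisSilvestre}(ii)), not from Theorem \ref{thm:NonlocalMilakisSilvestreGlobalHolderAndC1gam}, which concerns the boundary restriction solving $\I^r(w)=h$ on $\Sigma_0$ rather than $W$ itself on $\bar\Sigma^r$.
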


\begin{proof}[Proof of Lemma \ref{lem:NeumannVSisClassical}]
	(Some notation from \cite[Sections 2, 7]{CrIsLi-92} will be used for $J^{2,\pm}$.)  We will use the subsolution property of $W$ to show that classically in $\Sigma_0$,
	\[
		\partial_\nu W \geq h,
	\]
	and the reverse inequality follows by a similar argument which invokes the supersolution property.

		We see from Theorem \ref{thm:MilakisSilvestreGlobalHolderAndC1gam} that $W\in C^{1,\tilde\gam}(\bar\Sigma^{r})$, in particular $W$ is uniformly continuous in $\bar\Sigma^r$.  The uniform continuity implies that the sup-convolution of $W$ (see e.g. \cite[Section II]{IshiiLions-1990ViscositySolutions2ndOrder}) converges \emph{uniformly} to $W$, and hence implies that $W$ in fact has a second order Taylor expansion \emph{from above} on a dense subset of $\bar \Sigma^r$. (Indeed if $W^\al$ is the sup-convolution, then it has a second order Taylor expansion a.e. in $\bar\Sigma^r$ and hence a.e. can be strictly touched from above by a $C^2$ function.  This strict touching from above can be passed to a local touching from above to $W$ at some nearby point.)
	
	 Let $x_0\in\Sigma_0$.  We thus have $X_n\in\Sigma^r$ with $X_n\to x_0$ and $W$ can be touched from above by a $C^2$ function at $X_n$.  Hence there are $(p_n,A_n)\in J^{2,+}_{\Sigma^r}W(X_n)$ and furthermore since $W$ is differentiable,
	\[
	p_n = DW(X_n).
	\]
	Taking limits as $X_n\to x_0$, we see that there is some $(p,A)\in\bar J^{2,+}_{\bar \Sigma^r}W(x_0)$ with 
	\[
	p=DW(x_0),
	\]
	due to the continuity of $DW$.
	Hence by the definition of viscosity subsolution (in e.g. \cite[Section 7]{CrIsLi-92}) and Proposition \ref{propAppend:VSNeumannCondition} we have
	\[
	\partial_\nu W(x_0) = p\cdot\nu \geq h(x_0).
	\]
	Since $x_0$ was arbitrary, we conclude the inequality on all of $\Sigma_0$.  
\end{proof}

If instead of a strip $\Sigma^r$ we consider a half-space (think $r \to \infty$) then a subtlety arises in terms of uniqueness: given a solution to the problem in the half-space one may add a linear function which vanishes along the boundary hyperplane of the half-space (for the Dirichlet problem) or a constant (for the Neumann problem).  We incorporate these observations in the next lemma.

\begin{lem}\label{lem:UniquenessBoundedVS}
	Assume $G$ is uniformly elliptic and positively 1-homogeneous ((\ref{eqSetUp:EllipticF}), (\ref{eqSetUp:1HomF})) and that $w$ and $h$ are bounded and continuous on $\Sigma_0$. There is a unique bounded viscosity solution of
	\begin{equation}\label{eqAppend:DirichletInfinity}
		\begin{cases}
			\displaystyle	G(D^2W) = 0 \ &\text{in}\ \Sigma^\infty=\{x\cdot\nu>0\}\\
			\displaystyle	W=w\ &\text{on}\ \Sigma_0.
		\end{cases}
	\end{equation}
    If there exists a bounded viscosity solution of
	\begin{equation}\label{eqAppend:NeumannInfinity}
		\begin{cases}
			\displaystyle	G(D^2W) = 0 \ &\text{in}\ \Sigma^\infty=\{x\cdot\nu>0\}\\
			\displaystyle	\partial_\nu W= h \ &\text{on}\ \Sigma_0,
		\end{cases}
	\end{equation}
	then it must be unique, up to an additive constant.
\end{lem}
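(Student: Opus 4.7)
The plan for both uniqueness statements rests on a common scaling argument. Writing $V := W_1 - W_2$, uniform ellipticity (\ref{eqSetUp:EllipticF}) places $V$ in the extremal class $\M^-(D^2 V) \leq 0 \leq \M^+(D^2 V)$; moreover, since $F$ and hence $\M^\pm$ are positively $1$-homogeneous, the rescaled function $V_R(X) := V(RX)$ remains in the same class with the same $L^\infty$-norm, and with vanishing boundary data in both the Dirichlet and the Neumann cases. The idea is to combine this scaling invariance with boundary regularity for the extremal class at the flat hyperplane $\Sigma_0$ and let $R \to \infty$ at a fixed point $Y$, which forces $V$ to decay (Dirichlet) or to have vanishing gradient (Neumann).

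For the Dirichlet uniqueness in (\ref{eqAppend:DirichletInfinity}), $V$ vanishes on $\Sigma_0$. Applying the boundary Hölder/$C^{1,\gamma}$ estimate (Theorem \ref{thm:MilakisSilvestreDirichletRegularity}) to $V_R$ on $\bar B_1 \cap \bar\Sigma^\infty$ yields $|V_R(X)| \leq C \|V\|_\infty (X\cdot \nu)^\alpha$ for universal $\alpha \in (0,1)$ and $C=C(\lambda,\Lambda,d)$; substituting $Y = RX$,
\[
|V(Y)| \;\leq\; C\|V\|_\infty \,(Y\cdot \nu)^\alpha \,R^{-\alpha} \qquad\text{for } Y \in \bar B_R \cap \bar\Sigma^\infty,
\]
and sending $R \to \infty$ at fixed $Y$ forces $V \equiv 0$. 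Existence for (\ref{eqAppend:DirichletInfinity}) I would obtain as a locally uniform limit of the strip solutions $W_r$ to (\ref{eqAppend:Dirichlet}) as $r \to \infty$, using the uniform bound $\|W_r\|_\infty \leq \|w\|_\infty$ (by comparison with the constants $\pm\|w\|_\infty$, themselves $F$-solutions thanks to $1$-homogeneity) together with local equicontinuity from the same boundary/interior estimates and the usual stability of viscosity solutions.

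For the Neumann uniqueness up to an additive constant in (\ref{eqAppend:NeumannInfinity}), $V$ has zero Neumann data, attained classically by Lemma \ref{lem:NeumannVSisClassical} applied on each strip $\Sigma^r \cap B_r$. Applying the boundary $C^{1,\alpha}$ estimate (Theorem \ref{thm:NonlocalMilakisSilvestreGlobalHolderAndC1gam}) to $V_R$ on $\bar B_1 \cap \bar\Sigma^\infty$ gives $\|\nabla V_R\|_{L^\infty(\bar B_1 \cap \bar\Sigma^\infty)} \leq C \|V\|_\infty$ with $C = C(\lambda,\Lambda,d)$. Since $\nabla V_R(X) = R\,(\nabla V)(RX)$ by the chain rule, this translates into $|\nabla V(Y)| \leq C\|V\|_\infty / R$ for $Y \in \bar B_R \cap \bar\Sigma^\infty$; letting $R\to\infty$ at each fixed $Y$ yields $\nabla V \equiv 0$, so $V$ is constant on the connected half-space $\Sigma^\infty$.

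The main point to verify carefully is that the cited boundary regularity estimates of Milakis--Silvestre are being used in their scale-invariant form, with a universal constant depending only on $\lambda, \Lambda, d$ and valid for the extremal class with the smooth (in fact vanishing) boundary data one obtains by subtracting two solutions of the same equation; once this is secured, the argument is essentially automatic via the $1$-homogeneity of $F$ and $\M^\pm$, and it sidesteps the more delicate reflection or Liouville-type arguments that would otherwise be natural, particularly in the Neumann case.
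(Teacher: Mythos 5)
Your overall plan is sound for the Dirichlet part, and is close in spirit to the paper's argument (which is an iterated oscillation decay; your direct scaling formulation is a clean way to say the same thing). But there is a genuine gap in the Neumann part, and it is exactly the thing you flag at the end as needing to be checked: the claim that the boundary $C^{1,\alpha}$ estimate of Milakis--Silvestre applies to $V=W_1-W_2$ ``for the extremal class'' with zero Neumann data is false. The difference of two solutions of $F(D^2W)=0$ only lands in the Pucci class $\mathcal{M}^-(D^2V)\leq 0\leq\mathcal{M}^+(D^2V)$; for that class the Krylov--Safonov/Milakis--Silvestre theory gives a scale-invariant $C^{\tilde\gamma}$ (oscillation decay) estimate, but not a Lipschitz or $C^{1,\tilde\gamma}$ estimate. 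The $C^{1,\tilde\gamma}$ boundary estimate you cite -- in fact your reference Theorem~\ref{thm:NonlocalMilakisSilvestreGlobalHolderAndC1gam} is about the boundary operator $\I^r$ on $\Sigma_0$, not the bulk Neumann problem; you presumably mean Theorem~\ref{thm: MilakisSilvestre}(ii) or~\ref{thm:MilakisSilvestreGlobalHolderAndC1gam}(ii) -- is proven for viscosity solutions of $F(D^2U)=0$ with $C^\gamma$ Neumann data, and does not transfer to the difference of two such solutions. So the step ``$|\nabla V(Y)|\leq C\|V\|_\infty/R$, send $R\to\infty$'' is unjustified.

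The fix is to run the same scaling/iteration but at the weaker $C^{\tilde\gamma}$ level, which is what the paper does: the half-ball oscillation decay for the extremal class with zero Neumann data (equation (8.2) of Milakis--Silvestre, or equivalently part (i) of Theorem~\ref{thm: MilakisSilvestre} read as $\osc_{B^+_r}V\leq\mu\,\osc_{B^+_{2r}}V$) gives, upon iteration over dyadic scales, $\osc_{B^+_1(x_0)}V\leq 2\mu^k\|V\|_\infty\to 0$ for every $x_0\in\Sigma_0$; hence $V$ is constant, without ever needing gradient control. This is Lemma~\ref{lem:LiovillePropForInfiniteDomain} of the paper. For the Dirichlet case your argument is fine modulo a citation point: Theorem~\ref{thm:MilakisSilvestreDirichletRegularity} as restated in the paper is for $F(D^2U)=0$, but the boundary $C^{\tilde\gamma}$ estimate with zero Dirichlet data does hold for the Pucci class (that is the form needed, and it is the form in the original Milakis--Silvestre paper), so the decay $|V(Y)|\leq C\|V\|_\infty (Y\cdot\nu)^{\tilde\gamma}R^{-\tilde\gamma}$ is legitimate and matches the iterated-oscillation argument the paper sketches. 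Your existence argument for the Dirichlet problem is the same as the paper's.
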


\begin{proof}
	First we look at (\ref{eqAppend:DirichletInfinity}). We just note that existence is not an issue as we can simply extract local uniform limits from the solutions in the domains $\Sigma^r$ as $r\to\infty$ (thanks to the assumption that $w$ is bounded).
    Now we demonstrate the uniqueness. Let $W_1,W_2$ be two bounded solutions of (\ref{eqAppend:DirichletInfinity}) then $\tilde W := W_1-W_2$ satisfies
	\begin{equation*}
		\begin{cases}
			\displaystyle	M^+(D^2\tilde W) \geq 0 \ &\text{in}\ \Sigma^\infty\\
			\displaystyle	M^-(D^2\tilde W) \leq 0 \ &\text{in}\ \Sigma^\infty\\            
			\displaystyle	\tilde W = 0 \ &\text{on}\ \Sigma_0.
		\end{cases}
	\end{equation*}
    Since $\tilde W$ is bounded in $\Sigma^\infty$ and arguing as in the proof of Lemma \ref{lem:LiovillePropForInfiniteDomain}, the oscillation lemma (\cite[Proposition 4.10]{CaCa-95}) can be used to show that $\tilde W$ is a constant. Thus $W\equiv 0$ since it vanishes on $\Sigma_0$. Therefore $W_1=W_2$ in this case. 
	
	If $W_1$, $W_2$ are two bounded solutions to (\ref{eqAppend:NeumannInfinity}), it follows that $\tilde W = W_1-W_2$ solves
	\begin{equation*}
		\begin{cases}
			\displaystyle	M^+(D^2\tilde W) \geq 0 \ &\text{in}\ \Sigma^\infty\\
			\displaystyle	M^-(D^2\tilde W) \leq 0 \ &\text{in}\ \Sigma^\infty\\            
			\displaystyle	\partial_n \tilde W = 0 \ &\text{on}\ \Sigma_0.
		\end{cases}
	\end{equation*}    
    Since $\tilde W$ is a bounded function we can again apply the oscillation lemma (this time \cite[Section 8, equation 8.2]{MiSi-2006NeumannRegularity}) for the Neumann problem and conclude that the oscillation of $\tilde W$ in $\Sigma^\infty$ must vanish.  Thus $W_1-W_2$ is a constant.
    
\end{proof}

%%%%%%%%%%%%%%%%%%%%%%%%%%%%%%%%%%%%%%%%%%%%%%
%%%%%%%%%%%%%%%%%%%%%%%%%%%%%%%%%%%%%%%%%%%%%%
%%%%%%%%%%%%%%%%%%%%%%%%%%%%%%%%%%%%%%%%%%%%%%
%%%%%%%%%%%%%%%%%%%%%%%%%%%%%%%%%%%%%%%%%%%%%%
%%%%%%%%%%%%%%%%%%%%%%%%%%%%%%%%%%%%%%%%%%%%%%

\section{Estimates for the Dirichlet and Neumann problems}

The regularity theory for the Dirichlet and Neumann problems in the fully non-linear setting has a vast literature. The first interior a priori estimates were derived in  \cite{KrSa-1979EstimateDiffusionHitting}, \cite{KrSa-1980PropertyParabolicEqMeasurable} and later extended to viscosity solutions, see \cite{CaCrKoSw-96}, \cite{CaCa-95} for further discussion of known results. The Neumann problem for fully non-linear equations has been widely studied, including Monge-Amp\`ere and Bellman equations \cite{LiTr-1986MongeAmpere},\cite{LiTr-1986Belmann} and other non-linear Neumann-type boundary conditions \cite{Barles-1999NeumannQuasilinJDE}, \cite{BarlesDaLio-2006CAlphaNeumannJDE} . Boundary estimates for the Neumann problem are studied in \cite{MiSi-2006NeumannRegularity} by a reflection technique combined with the techniques used to obtain interior estimates \cite{CaCa-95}, \cite{KrSa-1979EstimateDiffusionHitting}.

For the reader's convenience, we will record here the various regularity results (both for the Dirichlet and Neumann problems) which are needed in our work.  For simplicity we make the references to \cite{MiSi-2006NeumannRegularity} and try to use a similar presentation.  We use the notation for balls whose centers are in $\Sigma_0$ as 
\[
\Sigma^\infty = \left\{ X\in\real^{d+1}\ :\ X\cdot\nu > 0    \right\}
\]
\[
B^+_r(0) = \left\{X\in\Sigma^\infty\ :\ \abs{X}<r  \right\}
\]
and
\[
B'_r(0) = \left\{x\in\Sigma_0\ :\ \abs{X}<1   \right\}.
\]

\begin{thm}\cite[See Theorems 2.1, 2.2]{MiSi-2006NeumannRegularity}\label{thm:MilakisSilvestreDirichletRegularity} 
    Let $U: \overline B_{1/2}^+ \to \mathbb{R}$ be a viscosity solution of 
    \begin{equation*}
	    \left \{ \begin{array}{rll}
              F(D^2U) & = 0   & \text{ in } B_{1/2}^+,\\
               U & = \phi  & \text{ on } B_{1/2}'.		
	    \end{array}\right.
    \end{equation*}
    There are constants $C>0, \tilde\gamma \in (0,\gamma)$, determined by $d,\lambda,\Lambda$ and $\gamma\in (0,1)$ such that
	\begin{itemize}
		\item[(i)]
    \begin{equation*}
        \|U\|_{C^{\tilde\gamma}(B^+_{1/4})} \leq C \left (\|U\|_{L^\infty(B^+_{1/2})}+\|\phi\|_{C^{\gam}(B_{1/2}')}+|F(0)| \right ).
    \end{equation*}
		\item[(ii)]
    \begin{equation*}
        \|U\|_{C^{1,\tilde\gamma}(B^+_{1/4})} \leq C \left (\|U\|_{L^\infty(B^+_{1/2})}+\|\phi\|_{C^{1,\gam}(B_{1/2}')}+|F(0)| \right ).
    \end{equation*}
	\end{itemize}
	
\end{thm}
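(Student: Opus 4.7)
The plan is to follow the standard boundary-flattening approach used throughout the Caffarelli school: reduce the problem to the case of zero boundary data at a specified boundary point by subtracting an extension of $\phi$, establish decay of oscillation (or of the distance from affine approximations) at each dyadic scale using a combination of barriers and interior Krylov--Safonov/Caffarelli estimates, and then iterate. The positive $1$-homogeneity is not required; only uniform ellipticity in the sense of \eqref{eqSetUp:EllipticF} and that $F(0)$ is well-defined matter, which is why the estimate picks up $|F(0)|$ on the right-hand side.

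For part (i), I would first reduce to the case $\phi(0)=0$ and $F(0)=0$ by replacing $U$ with $U-\phi(0) + F(0)\,\tfrac{1}{2\lambda}|x|^2$-style adjustments, absorbing constants into the right-hand side. Then the key lemma to prove is pointwise H\"older decay at a boundary point: there exist $\tilde\gamma\in(0,\gamma)$ and $C$ so that for all $x_0\in B'_{1/4}$,
\begin{equation*}
\operatorname*{osc}_{B_r^+(x_0)} U \;\leq\; C\, r^{\tilde\gamma}\bigl(\|U\|_{L^\infty(B_{1/2}^+)}+\|\phi\|_{C^\gamma(B_{1/2}')}+|F(0)|\bigr),\qquad r<1/4.
\end{equation*}
To prove this, fix $x_0\in B'_{1/4}$; on the boundary portion of $B_r^+(x_0)$ one has $|U-\phi(x_0)|\le [\phi]_{C^\gamma} r^\gamma$, and on the top cap one has the crude bound $\|U\|_{L^\infty}$. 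A barrier of the form $w(X)=A\,r^\gamma + B\,\psi(X/r)$ with $\psi$ an auxiliary function vanishing on the flat boundary and satisfying $\mathcal M^+(D^2\psi)\le -c<0$ on $B^+_1$ can be inserted via the comparison principle (Theorem~\ref{thm:IshiiLionsExUnDirichletAndNeumann}) to force the oscillation in the interior half-ball to drop by a geometric factor strictly less than one. Iterating dyadically produces the H\"older exponent $\tilde\gamma$. Combining this boundary H\"older decay with the interior Krylov--Safonov estimate from \cite{CaCa-95} via the standard glueing argument yields the global $C^{\tilde\gamma}$ bound on $B_{1/4}^+$.

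For part (ii), the approach is iterative affine approximation. After normalizing we may assume $\|U\|_{L^\infty}\le 1$, $\|\phi\|_{C^{1,\gamma}}\le 1$, and $F(0)=0$. Then I would prove by induction on $k$ the existence of affine functions $L_k(X)=a_k+b_k\cdot X$ with $b_k$ tangent to $\Sigma_0$ (that is, $b_k\cdot\nu$ free to adjust, while the tangential slope comes from $\nabla_{x}\phi(0)$ up to error), so that
\begin{equation*}
\|U-L_k\|_{L^\infty(B_{\rho^k}^+)} \;\leq\; \rho^{k(1+\tilde\gamma)},\qquad |a_{k+1}-a_k|+\rho^k|b_{k+1}-b_k|\leq C\rho^{k(1+\tilde\gamma)},
\end{equation*}
for a fixed $\rho\in(0,1)$ and $\tilde\gamma<\gamma$. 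The inductive step is the heart of the argument: assuming the estimate at scale $k$, rescale $V(Y)=\rho^{-k(1+\tilde\gamma)}(U-L_k)(\rho^k Y)$; by uniform ellipticity and $1$-homogeneity of the rescaled operator, $V$ satisfies a uniformly elliptic equation $F_k(D^2 V)=0$ in $B_1^+$ with boundary data on $B_1'$ of size controlled by $\rho^{k(\gamma-\tilde\gamma)}\|\phi\|_{C^{1,\gamma}}$. A compactness/contradiction argument, combined with the $C^{1,\alpha}$ boundary regularity of solutions to homogeneous equations with \emph{affine} boundary data (which reduces via odd reflection or a barrier to interior $C^{1,\alpha}$ of \cite{CaCa-95}), shows that $V$ is close to an affine function $L$ vanishing on $\Sigma_0$; setting $L_{k+1}=L_k+\rho^{k(1+\tilde\gamma)}L(\cdot/\rho^k)$ and choosing $\rho$ small enough that $\|V-L\|_{L^\infty(B_\rho^+)}\le \rho^{1+\tilde\gamma}$ closes the induction.

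I expect the main obstacle to be the inductive step in (ii), specifically the need to handle the nontrivial boundary data after rescaling: the right-hand side of the boundary condition for $V$ is not zero, and one must argue that it converges to an affine function as $k\to\infty$ so that the compactness limit is the homogeneous-zero-boundary-data problem whose $C^{1,\tilde\gamma}$ regularity is known. Technically this requires a quantitative stability estimate for the Dirichlet problem under $C^{1,\gamma}$ perturbations of the data, which in turn relies on the already-proved part (i) applied to differences. Part (i) itself has as its sole delicate point the construction of a valid barrier $\psi$ realizing $\mathcal{M}^+(D^2\psi)\le -c<0$ with controlled growth near the flat boundary; here a function of the form $\psi(X) = 1-(1-x_{d+1})^\alpha \chi(|x'|)$ for appropriate $\alpha$ and a cut-off $\chi$ (or more simply $c_1 x_{d+1}+c_2 |x'|^2$ for small heights) does the job.
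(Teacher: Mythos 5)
This theorem is not proved in the paper: it is imported as a black box from Milakis--Silvestre (their Theorems~2.1 and~2.2), so there is no in-paper argument to compare your proposal against. That said, your sketch is a reasonable reconstruction of the standard boundary regularity argument for the Dirichlet problem, and the overall architecture (barrier-based oscillation decay at the boundary plus interior Krylov--Safonov for part~(i); iterated affine approximation by compactness for part~(ii)) is what one finds in the literature.

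Two technical points are worth flagging. First, you begin by (correctly) observing that $1$-homogeneity is not needed, yet in the inductive step of part~(ii) you appeal to ``$1$-homogeneity of the rescaled operator.'' This is unnecessary: the normalized rescaled operator $F_k(M) := \rho^{k(1-\tilde\gamma)}F\bigl(\rho^{-k(1-\tilde\gamma)}M\bigr)$ has exactly the same ellipticity constants $(\lambda,\Lambda)$ as $F$ for \emph{every} $F$ satisfying \eqref{eqSetUp:EllipticF}, because the Pucci operators are themselves positively $1$-homogeneous; what one does need to track is $F_k(0)=\rho^{k(1-\tilde\gamma)}F(0)$, which is why $|F(0)|$ appears on the right-hand side. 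Second, your parenthetical ``via odd reflection'' is not available for a general $F$: odd reflection $\tilde U(x',-x_{d+1}):=-U(x',x_{d+1})$ turns $F(D^2U)=0$ into $-F(-D^2\tilde U)=0$ in the reflected half-ball, which is a different operator unless $F$ is odd, so the reflected function does not solve a single equation across the hyperplane. (Even reflection has an analogous but milder problem for the Neumann case, which is precisely what Milakis--Silvestre handle with their discontinuous-coefficient argument.) Your alternative route via a barrier giving Lipschitz decay at the boundary, followed by a compactness step, is the one that actually closes.
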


The estimates gathered in Theorem \ref{thm:MilakisSilvestreDirichletRegularity} lead in a straightforward manner to global estimates for the fully nonlinear Dirichlet problem.

\begin{thm}\label{thm:MilakisSilvestreGlobalHolderAndC1gamDirichlet} 
    Assume that $r\geq 1/2$.  Let $U: \overline \Sigma^r \to \mathbb{R}$ be a viscosity solution of 
    \begin{equation*}
	    \left \{ \begin{array}{rll}
              F(D^2U) & = 0   & \text{ in } \Sigma^r,\\
               U & = \phi  & \text{ on } \Sigma_0.		
	    \end{array}\right.
    \end{equation*}
    There are constants $C>0, \tilde\gamma \in (0,\gamma)$, determined by $d,\lambda,\Lambda$ and $\gamma\in (0,1)$ such that
	\begin{itemize}
		\item[(i)]
    \begin{equation*}
        \|U\|_{C^{\tilde\gamma}(\Bar \Sigma^{r})} \leq C \left (\|U\|_{L^\infty(\bar \Sigma^r)}+\|\phi\|_{C^{\gam}(\Sigma_0)}+|F(0)| \right ).
    \end{equation*}
		\item[(ii)]
    \begin{equation*}
        \|U\|_{C^{1,\tilde\gamma}(\Bar \Sigma^{r})} \leq C \left (\|U\|_{L^\infty(\bar \Sigma^r)}+\|\phi\|_{C^{1,\gam}(\Sigma_0)}+|F(0)| \right ).
    \end{equation*}
	\end{itemize}

\end{thm}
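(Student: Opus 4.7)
The plan is to bootstrap from the local boundary estimate of Theorem \ref{thm:MilakisSilvestreDirichletRegularity} to a global estimate on the infinite strip by a standard covering / patching argument that exploits the translation invariance of $F$ along directions parallel to $\Sigma_0$. Following the paper's standing convention we take $U = 0$ on $\Sigma_r$ (as in \eqref{eqSetUp:DirichletrScale}), so that the analogous boundary estimate also applies near $\Sigma_r$ with vanishing data.

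Fix any $X_0 \in \overline{\Sigma^r}$. I would split into three cases according to the distance to the top and bottom of the strip.

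\textbf{Case 1 (near $\Sigma_0$).} If $X_0 \cdot \nu \leq 1/4$, let $x_0 \in \Sigma_0$ be the orthogonal projection of $X_0$ onto $\Sigma_0$. Since $r \geq 1/2$, the half-ball $B^+_{1/2}(x_0)$ is contained in $\overline{\Sigma^r}$; translating $F$ (which is permissible because $F$ is constant coefficient) reduces this to the setting of Theorem \ref{thm:MilakisSilvestreDirichletRegularity}. This yields
\begin{equation*}
\|U\|_{C^{\tilde\gamma}(B^+_{1/4}(x_0))} \leq C\bigl(\|U\|_{L^\infty(\bar\Sigma^r)} + \|\phi\|_{C^\gamma(\Sigma_0)}\bigr),
\end{equation*}
with $C$ independent of $x_0$, and analogously for (ii) with $C^{1,\tilde\gamma}$ and $\|\phi\|_{C^{1,\gamma}}$.

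\textbf{Case 2 (near $\Sigma_r$).} If $X_0 \cdot \nu \geq r - 1/4$, apply the symmetric boundary estimate at $\Sigma_r$ to $\tilde U(x,s) := U(x, r-s)$, which satisfies the same type of equation with zero Dirichlet data on $\{s = 0\}$. The $C^\gamma$ norm of the data is zero, so the corresponding patch estimate is again uniform.

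\textbf{Case 3 (interior).} If $1/4 \leq X_0 \cdot \nu \leq r - 1/4$, the ball $B_{1/8}(X_0)$ lies inside $\Sigma^r$, and one invokes the interior Krylov--Safonov $C^{\tilde\gamma}$ estimate of \cite{CaCa-95} (for (i)) or the interior $C^{1,\tilde\gamma}$ estimate (for (ii)) to obtain a uniform bound in $B_{1/16}(X_0)$.

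Having uniform local (half-)ball estimates with constants depending only on $\lambda,\Lambda,d,\gamma, \|U\|_{L^\infty}$ and the norm of $\phi$, the global estimate follows from the usual dichotomy: for any $X, Y \in \overline{\Sigma^r}$ with $|X-Y| \leq 1/16$ both points lie in a common patch coming from one of the three cases above, while for $|X - Y| \geq 1/16$ one has trivially
\begin{equation*}
|U(X) - U(Y)| \leq 2\|U\|_{L^\infty(\bar\Sigma^r)} \leq C\|U\|_{L^\infty(\bar\Sigma^r)}\,|X-Y|^{\tilde\gamma}.
\end{equation*}
For (ii), the analogous argument is applied to $DU$, using the uniform gradient bound obtained in each patch together with the $C^{\tilde\gamma}$ estimate on $DU$ inside each patch.

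The main technical points are: (a) making sure that the constants in each patch estimate are genuinely uniform, which follows because $F$, the strip geometry, and the width $r \geq 1/2$ are all translation invariant along $\Sigma_0$, so the same half-ball works at every base point; and (b) adopting the correct interpretation of the hypothesis on $\Sigma_r$ --- without the zero (or otherwise controlled) boundary data on $\Sigma_r$ there is no hope of estimating $U$ up to $\Sigma_r$, and this is what forces $U|_{\Sigma_r}$ to be implicit in the statement. The remaining steps are routine covering and interpolation.
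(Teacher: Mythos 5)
Your proof is correct and follows essentially the same covering-and-patching strategy as the paper: boundary estimates from Theorem \ref{thm:MilakisSilvestreDirichletRegularity} near $\Sigma_0$, the analogous estimate near $\Sigma_r$ using the (implicit) zero Dirichlet data there, interior Krylov--Safonov/Caffarelli estimates in the middle, and translation invariance to make the constants uniform. Your observation that the statement tacitly assumes $U=0$ on $\Sigma_r$ is on point and is indeed how the paper's proof handles the top boundary.
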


\begin{proof}
    Let $x_0 \in \Sigma_0$ be arbitrary. Since $r\geq 1/2$ we have $B^+_{1/2}(x_0)\subset \Sigma^{r}$, thus $U(\cdot+x_0)$ is a viscosity solution in $B^+_{1/2}$. In that case, Theorem \ref{thm:MilakisSilvestreDirichletRegularity} says that
    \begin{equation*}
		  \|U\|_{C^{\tilde \gamma}(B^+_{1/4}(x_0))}= \| U(\cdot+x_0)\|_{C^{\tilde \gamma}(B^+_{1/4})}\leq C\left ( \|U\|_{L^\infty(\Sigma^r)}+\|\phi\|_{C^{\gamma}(\Sigma_0)}+|F(0)| \right ).
    \end{equation*}
    Now, let $X_0 \in \Sigma^{r/2}$ be such that $B_{1/2}(X_0)\subset \Sigma^r$. Then,
    \begin{equation*}
		  \|U\|_{C^{\tilde \gamma}(B_{1/4}(X_0))} \leq C \left (\|U\|_{L^\infty(B_{1/2}(X_0))}+|F(0)| \right )\leq C\left (\|U\|_{L^\infty(\Sigma^r)}+|F(0)|\right ).
    \end{equation*}
    Since $\{B_{1/2}^+(x_0)\}_{x_0\in \Sigma_0}$ and $\{ B_{1/2}(X_0) \}_{B_{1/2}(X_0)\subset \Sigma^r}$ form a cover of $\Sigma^{3r/4}$, the previous two estimates imply that
    \begin{equation*}
		  \|U\|_{C^{\tilde \gamma}(\Sigma^{3r/4})} \leq C\left ( \|U\|_{L^\infty(\Sigma^r)}+\|\phi\|_{C^{\gamma}(\Sigma_0)}+|F(0)| \right ).
    \end{equation*}
    An estimate for the $C^{\tilde \gamma}$ estimate norm over $\Sigma^r\setminus \Sigma^{r/4}$ can be obtained in the same manner, using the boundary estimate from  Theorem \ref{thm:MilakisSilvestreDirichletRegularity} with $\phi = 0$ since $U \equiv 0 $ on $\Sigma_r$. Combining these two bounds we obtain the first estimate. The second estimate is proven in an analogous manner, using instead the $C^{1,\tilde \gamma}$ boundary estimate from Theorem \ref{thm:MilakisSilvestreDirichletRegularity} and the interior $C^{1,\tilde \gamma}$ estimate for translation invariant fully non-linear equations, e.g. \cite[Corollary 5.7]{CaCa-95}.
    
\end{proof}

As a consequence of the preceding results we see that the operator $\I^r$ maps $C^{1,\gamma}$ to $C^{\tilde \gamma}$ for some $\tilde \gamma \in (0,\gamma)$.

\begin{cor}\label{cor: ClassicalEvaluation}
    For $\gamma \in (0,1)$ there $\exists$ $C>0$, $\tilde \gamma \in (0,\gamma)$ given by $d,\lambda$ and $\Lambda$ such that
    \begin{equation*}
         \|\I^r(\phi)\|_{C^{\tilde \gamma}} \leq C \|\phi\|_{C^{1,\gamma}},\;\;\forall\;\phi \in C^{1,\gamma}(\Sigma_0),\;\forall\;r\geq 1/2.
    \end{equation*}
\end{cor}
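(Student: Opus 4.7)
My plan is to read off Corollary \ref{cor: ClassicalEvaluation} directly from the global $C^{1,\tilde\gamma}$ estimate in Theorem \ref{thm:MilakisSilvestreGlobalHolderAndC1gamDirichlet}(ii), after handling the $L^\infty$ term on the right-hand side by the maximum principle. The key point is that the constants in Theorem \ref{thm:MilakisSilvestreGlobalHolderAndC1gamDirichlet} are already uniform for $r \geq 1/2$, so uniformity in $r$ comes for free.

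First, I would fix $\phi \in C^{1,\gamma}(\Sigma_0)$ and $r \geq 1/2$, and let $U = U^r_\phi$ denote the unique viscosity solution of \eqref{eqSetUp:DirichletrScale}. By the 1-homogeneity of $F$ (assumption \eqref{eqSetUp:1HomF}), one has $F(0) = 0$, so the constant functions $\pm \|\phi\|_{L^\infty(\Sigma_0)}$ are classical (hence viscosity) solutions of $F(D^2 w) = 0$ in $\Sigma^r$ whose Dirichlet data on $\Sigma_0 \cup \Sigma_r$ bound those of $U$ from above and below respectively. The comparison principle for \eqref{eqSetUp:DirichletrScale} then yields
\begin{equation*}
\|U\|_{L^\infty(\Sigma^r)} \;\leq\; \|\phi\|_{L^\infty(\Sigma_0)} \;\leq\; \|\phi\|_{C^{1,\gamma}(\Sigma_0)}.
\end{equation*}

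Next, I would apply Theorem \ref{thm:MilakisSilvestreGlobalHolderAndC1gamDirichlet}(ii), which is valid precisely because $r \geq 1/2$. Combined with the $L^\infty$ bound above and with $F(0) = 0$, it produces constants $C>0$ and $\tilde\gamma \in (0,\gamma)$ depending only on $d$, $\lambda$, $\Lambda$, $\gamma$ such that
\begin{equation*}
\|U\|_{C^{1,\tilde\gamma}(\overline{\Sigma^r})} \;\leq\; C\bigl(\|U\|_{L^\infty(\Sigma^r)} + \|\phi\|_{C^{1,\gamma}(\Sigma_0)}\bigr) \;\leq\; C\|\phi\|_{C^{1,\gamma}(\Sigma_0)}.
\end{equation*}
In particular, $\nabla U \in C^{\tilde\gamma}(\overline{\Sigma^r})$ with the same bound, and restricting to $\Sigma_0$ and taking the inner product with the fixed unit vector $\nu$ gives
\begin{equation*}
\|\I^r(\phi)\|_{C^{\tilde\gamma}(\Sigma_0)} \;=\; \|\partial_\nu U|_{\Sigma_0}\|_{C^{\tilde\gamma}(\Sigma_0)} \;\leq\; \|\nabla U\|_{C^{\tilde\gamma}(\overline{\Sigma^r})} \;\leq\; C\|\phi\|_{C^{1,\gamma}(\Sigma_0)}.
\end{equation*}

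There is no real obstacle here — the work has all been done in Theorem \ref{thm:MilakisSilvestreGlobalHolderAndC1gamDirichlet}. The only items one must check carefully are that $F(0) = 0$ so that the comparison step genuinely gives $\|U\|_{L^\infty} \leq \|\phi\|_{L^\infty}$ (otherwise one would be left with an uncontrolled $|F(0)|$ term on the right), and that the constants $C$ and $\tilde\gamma$ from the cited global estimate do not degenerate as $r$ grows — which is transparent from the proof of Theorem \ref{thm:MilakisSilvestreGlobalHolderAndC1gamDirichlet}, where one only uses the local boundary estimates of \cite{MiSi-2006NeumannRegularity} on balls of fixed size together with interior estimates, both of whose constants are $r$-independent.
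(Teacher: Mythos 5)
Your proof is correct and follows essentially the same route as the paper's: establish $\|U\|_{L^\infty(\Sigma^r)}\leq\|\phi\|_{L^\infty(\Sigma_0)}$ by comparison with constant barriers (using $F(0)=0$, which you rightly note follows from positive $1$-homogeneity), apply Theorem \ref{thm:MilakisSilvestreGlobalHolderAndC1gamDirichlet}(ii), and restrict the gradient to $\Sigma_0$. Your added remarks on the $r$-uniformity of the constants and on why $F(0)=0$ matters are accurate but match the paper's (more terse) reasoning.
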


\begin{proof}
    Let $U=U^r_\phi$ as defined in \eqref{eqSetUp:DirichletrScale}. Recall that for the $F$ in this problem we are assuming that $F(0)=0$, which means that constants are solutions for the elliptic equation in $\Sigma^r$, thus
    \begin{equation*}
          \|U\|_{L^\infty(\Sigma^r)}\leq \|U\|_{L^\infty(\Sigma_0)}=\|\phi\|_{L^\infty(\Sigma_0)}.
    \end{equation*}
    Then, part two of Theorem \ref{thm:MilakisSilvestreGlobalHolderAndC1gamDirichlet} says that
    \begin{align*}
          \|U\|_{C^{1,\tilde \gamma}(\Sigma^r)} & \leq  C\left (\|U\|_{L^\infty(\Sigma^r)}+ \|\phi\|_{C^{1,\gamma}(\Sigma_0)}+|F(0)|\right ),\\
                                            & \leq  C\left (\|\phi\|_{L^\infty(\Sigma_0)}+ \|\phi\|_{C^{1,\gamma}(\Sigma_0)} \right ),\\
                                            & \leq  2C \|\phi\|_{C^{1,\gamma}(\Sigma_0)}.                                            
    \end{align*}
    Here we used again that $F(0)=0$. Then, 
    \begin{align*}
          \|\I^r(\phi)\|_{C^{\tilde \gamma}(\Sigma_0)} & = \|\partial_\nu U \|_{C^{\bar \gamma}(\Sigma_0)} \leq \| U \|_{C^{1,\bar \gamma}(\Sigma^r)}\leq 2C \|\phi\|_{C^{1,\gamma}(\Sigma_0)},
    \end{align*}
    which is what we wanted.
\end{proof}

Now we review the available results for the Neumann problem, again borrowing the results from \cite{MiSi-2006NeumannRegularity}.

\begin{thm}\cite[See Theorems 8.1, 8.2]{MiSi-2006NeumannRegularity}\label{thm: MilakisSilvestre} 
    Let $U: \overline B_{1/2}^+ \to \mathbb{R}$ be a viscosity solution of 
    \begin{equation*}
	    \left \{ \begin{array}{rll}
              F(D^2U) & = 0   & \text{ in } B_{1/2}^+,\\
              \partial_\nu U & = h  & \text{ on } B_{1/2}'.		
	    \end{array}\right.
    \end{equation*}
    There are constants $C>0, \tilde\gamma \in (0,\gamma)$, determined by $d,\lambda,\Lambda$ and $\gamma\in (0,1)$ such that
	\begin{itemize}
		\item[(i)]
    \begin{equation*}
        \|U\|_{C^{\tilde\gamma}(B^+_{1/4})} \leq C \left (\|U\|_{L^\infty(B^+_{1/2})}+\|h\|_{L^\infty(B_{1/2}')}+|F(0)| \right ).
    \end{equation*}
		\item[(ii)]
    \begin{equation*}
        \|U\|_{C^{1,\tilde\gamma}(B^+_{1/4})} \leq C \left (\|U\|_{L^\infty(B^+_{1/2})}+\|h\|_{C^{\gam}(B_{1/2}')}+|F(0)| \right ).
    \end{equation*}
	\end{itemize}

\end{thm}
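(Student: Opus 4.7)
Since this is a cited boundary regularity result, I sketch how I would reestablish both estimates by reducing them, via an even reflection across the hyperplane $B'_{1/2}$, to known interior estimates for fully nonlinear uniformly elliptic equations. Part (i) will rest on the interior Krylov--Safonov $C^{\tilde\gamma}$ estimate for viscosity sub/supersolutions of the Pucci extremal operators $\M^{\pm}$ (see \cite{CaCa-95}), while part (ii) will combine part (i) with a dyadic iteration that improves the local behavior of $U$ to $C^{1,\tilde\gamma}$.

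For part (i), define the even extension $\bar U(x',x_{d+1}) := U(x',|x_{d+1}|)$ on $B_{1/2}$, and consider the two corrected functions
\begin{equation*}
V^{\pm}(x) \;:=\; \bar U(x) \;\pm\; \|h\|_{L^\infty(B'_{1/2})} |x_{d+1}|.
\end{equation*}
I would verify that $V^{+}$ is a viscosity supersolution of $\M^{-}(D^2 V^{+}) \leq |F(0)|$ and $V^{-}$ is a viscosity subsolution of $\M^{+}(D^2 V^{-}) \geq -|F(0)|$ on all of $B_{1/2}$. Away from $\{x_{d+1}=0\}$ these inequalities are inherited from uniform ellipticity of $F$ together with the symmetry of the reflection, since $D^2 \bar U$ simply conjugates $D^2 U$ by the reflection matrix, which lies in $[\lambda \Id, \Lambda \Id]$-compatible cone. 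On the boundary hyperplane, the Neumann condition $\partial_\nu U = h$, interpreted in the strong viscosity sense (Proposition \ref{propAppend:VSNeumannCondition}), together with the $\pm \|h\|_{L^\infty}|x_{d+1}|$ correction, forces any admissible test function to satisfy the desired extremal inequality. The interior Krylov--Safonov $C^{\tilde\gamma}$ estimate applied to $V^{\pm}$ in $B_{1/4}$ then yields the Hölder bound for $U = \tfrac{1}{2}(V^{+} + V^{-})$ in $B^{+}_{1/4}$.

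For part (ii), I would use a standard perturbation and iteration argument. Fix $x_0 \in B'_{1/4}$, and for each dyadic radius $r = 2^{-k}$ solve the auxiliary Neumann problem with \emph{constant} data $h(x_0)$ on $B^{+}_r(x_0)$, with Dirichlet data equal to $U$ on the curved part of $\partial B^{+}_r(x_0)$; call its solution $P_r$. After subtracting the linear function $h(x_0) x_{d+1}$, the function $P_r - h(x_0)x_{d+1}$ solves a zero-Neumann problem, which — by the reflection principle used in part (i) together with Caffarelli's interior $C^{1,\tilde\gamma}$ estimate for translation-invariant uniformly elliptic equations (\cite[Cor.\ 5.7]{CaCa-95}) — is $C^{1,\tilde\gamma}$ up to the boundary. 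The difference $U - P_r$ then satisfies $F(D^2(U-P_r)) \in [\M^{-}, \M^{+}]$ with Neumann data $h - h(x_0)$ of size $\|h\|_{C^\gamma} r^\gamma$, so part (i) provides an oscillation bound of order $r^{\tilde\gamma}(\|h\|_{C^\gamma} r^\gamma + \mathrm{osc}\,U)$. Iterating this improvement across dyadic annuli produces affine approximants $\ell_{x_0}$ of $U$ at $x_0$ with $|U - \ell_{x_0}| \leq C r^{1+\tilde\gamma'}$ for some $\tilde\gamma' \in (0,\gamma)$, which is equivalent to the desired global $C^{1,\tilde\gamma}$ bound.

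The genuine technical obstacle, and the step I expect to absorb most of the effort, is the rigorous verification of the viscosity sub/supersolution inequalities for the reflected and modified functions $V^{\pm}$ on the boundary hyperplane in part (i). Since $h$ is only $L^\infty$, pointwise evaluation of $\partial_\nu U$ is unavailable, and the entire argument must be carried out at the level of test functions: if a smooth $\varphi$ touches $V^{+}$ from above at some $x_0 \in B'_{1/2}$, one must rule out any configuration in which $\M^{-}(D^2\varphi(x_0)) > |F(0)|$ by constructing a refined test function for the \emph{original} Neumann problem, analogous to the auxiliary perturbation $\psi$ built in the proof of Proposition \ref{propAppend:VSNeumannCondition}, and using the correction $+\|h\|_{L^\infty}|x_{d+1}|$ to dominate the Neumann contribution. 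Once this test-function bookkeeping is in place, the remaining steps are straightforward invocations of interior theory.
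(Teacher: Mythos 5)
The paper does not prove Theorem~\ref{thm: MilakisSilvestre}; it is cited verbatim from Milakis--Silvestre (Theorems 8.1 and 8.2) and recorded in the appendix for the reader's convenience, so there is no in-paper proof to compare against. That said, your sketch does track the strategy behind the cited result: even reflection across the hyperplane plus interior Krylov--Safonov and Caffarelli-type iteration is precisely the mechanism, and the paper itself says as much in the paragraph introducing the appendix.

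Two concrete problems with the sketch as written. First, the sign assignment for $V^{\pm}$ is reversed under the paper's inward-normal convention $\partial_\nu = \partial_{x_{d+1}}$: for $h>0$ the even extension $\bar U(x',x_{d+1}) = U(x',|x_{d+1}|)$ has a \emph{convex} corner of magnitude $2h$ on $\{x_{d+1}=0\}$, so $\bar U + \|h\|_{L^\infty}|x_{d+1}|$ has a still larger convex corner and is a \emph{subsolution}, not a supersolution; the function with the concave corner, hence the supersolution, is $\bar U - \|h\|_{L^\infty}|x_{d+1}|$. Second, and more substantively, the verification of the extremal inequalities across the hyperplane is not the routine test-function bookkeeping you suggest, and the analogy with Proposition~\ref{propAppend:VSNeumannCondition} runs in the wrong direction: that proposition forces the Neumann alternative by making the PDE alternative fail, whereas here you need the reverse. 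If an even $\phi$ touches $\bar U - \|h\|_{L^\infty}|x_{d+1}|$ from below at a boundary point $x_0$, the shifted test function $\psi := \phi + \|h\|_{L^\infty} x_{d+1}$ has $\partial_\nu\psi(x_0)=\|h\|_{L^\infty} \geq h(x_0)$, and the viscosity disjunction $\min\bigl(\partial_\nu\psi - h,\ F(D^2\psi)\bigr)\leq 0$ only forces $F(D^2\psi(x_0))\leq 0$ when the Neumann inequality fails \emph{strictly}; at points where $h$ is at its essential supremum it does not. This can be repaired (e.g.\ replace $\|h\|_{L^\infty}$ by $\|h\|_{L^\infty}+\delta$, invoke stability of viscosity supersolutions under uniform limits, and send $\delta\to 0$), but that perturbation is exactly the step you deferred. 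As written, the proposal is a plausible outline consistent with the Milakis--Silvestre reflection strategy, not a complete proof; and in any case the paper treats the theorem as a black box.
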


The estimates from Theorems \ref{thm:MilakisSilvestreDirichletRegularity} and \ref{thm: MilakisSilvestre} can be used to prove global estimates for $U$. The proof is entirely analogous to that of Theorem \ref{thm:MilakisSilvestreGlobalHolderAndC1gamDirichlet} and we omit it.

\begin{thm}\label{thm:MilakisSilvestreGlobalHolderAndC1gam} 
    Assume that $r\geq 1/2$.  Let $U: \overline \Sigma^r \to \mathbb{R}$ be a viscosity solution of 
    \begin{equation*}
	    \left \{ \begin{array}{rll}
              F(D^2U) & = 0   & \text{ in } \Sigma^r,\\
              \partial_\nu U & = h  & \text{ on } \Sigma_0.		
	    \end{array}\right.
    \end{equation*}
    There are constants $C>0, \tilde\gamma \in (0,\gamma)$, determined by $d,\lambda,\Lambda$ and $\gamma\in (0,1)$ such that
	\begin{itemize}
		\item[(i)]
    \begin{equation*}
        \|U\|_{C^{\tilde\gamma}(\Bar \Sigma^{r})} \leq C \left (\|U\|_{L^\infty(\bar \Sigma^r)}+\|h\|_{L^\infty(\Sigma_0)}+|F(0)| \right ).
    \end{equation*}
		\item[(ii)]
    \begin{equation*}
        \|U\|_{C^{1,\tilde\gamma}(\Bar \Sigma^{r})} \leq C \left (\|U\|_{L^\infty(\bar \Sigma^r)}+\|h\|_{C^\gam(\Sigma_0)}+|F(0)| \right ).
    \end{equation*}
	\end{itemize}

\end{thm}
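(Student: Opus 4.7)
The plan is to mimic the covering argument used in the proof of Theorem \ref{thm:MilakisSilvestreGlobalHolderAndC1gamDirichlet}, substituting the boundary Neumann regularity result (Theorem \ref{thm: MilakisSilvestre}) for the boundary Dirichlet regularity result near $\Sigma_0$, and retaining the Dirichlet-type boundary estimate near $\Sigma_r$ where the implicit auxiliary condition $U\equiv 0$ is in force (consistent with the setup of \eqref{eqSetUp:DirichletrScale} and with the proof of the Dirichlet version).

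First I would fix an arbitrary $x_0\in\Sigma_0$ and apply Theorem \ref{thm: MilakisSilvestre} to the translated solution $U(\cdot+x_0)$, which solves the same Neumann problem on the half-ball $B_{1/2}^+$ by translation invariance of $F$ and of $\Sigma_0$. This yields
\[
\|U\|_{C^{\tilde\gamma}(B_{1/4}^+(x_0))} \leq C\bigl(\|U\|_{L^\infty(\Sigma^r)}+\|h\|_{L^\infty(\Sigma_0)}+|F(0)|\bigr),
\]
together with the corresponding $C^{1,\tilde\gamma}$ bound in which $\|h\|_{C^\gamma}$ replaces $\|h\|_{L^\infty}$. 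Next, for any $X_0\in\Sigma^r$ with $B_{1/2}(X_0)\subset\Sigma^r$, the interior estimates for translation-invariant uniformly elliptic equations in \cite{CaCa-95} (Proposition 4.10 for $C^{\tilde\gamma}$ and Corollary 5.7 for $C^{1,\tilde\gamma}$) give the same local bounds on $B_{1/4}(X_0)$. Finally, for $x_r\in\Sigma_r$ the solution vanishes identically, so Theorem \ref{thm:MilakisSilvestreDirichletRegularity} applied to $U(\cdot+x_r)$ (after flipping the orientation so that the flat side of the half-ball lies along $\Sigma_r$) with $\phi=0$ furnishes the required local bound near $\Sigma_r$.

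Since $r\geq 1/2$, the three families of half-balls and interior balls just described cover $\overline{\Sigma^r}$, and the local bounds can be stitched into a global estimate by the usual trick: if $X,Y\in\overline{\Sigma^r}$ lie in a common ball from the cover the local Hölder bound applies directly, whereas if $|X-Y|\geq 1/4$ one bounds
\[
|U(X)-U(Y)|\;\leq\; 2\|U\|_{L^\infty(\Sigma^r)}\;\leq\; 2\cdot 4^{\tilde\gamma}\|U\|_{L^\infty(\Sigma^r)}|X-Y|^{\tilde\gamma}
\]
and absorbs the constant. The analogous trick applied to $|DU(X)-DU(Y)|$, combined with the local $L^\infty$ bound on $DU$ that is implied by the local $C^{1,\tilde\gamma}$ bound, yields the global $C^{1,\tilde\gamma}$ estimate.

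The only obstacle is bookkeeping rather than analysis: one must check that when $r$ is close to $1/2$, the two boundary half-ball families already suffice to cover $\overline{\Sigma^r}$ (the interior family may be empty), but this is harmless because the two boundary estimates can overlap and simply be combined. No substantive new difficulty arises beyond what is already handled in the proof of Theorem \ref{thm:MilakisSilvestreGlobalHolderAndC1gamDirichlet}, which is why the paper elects to omit the details.
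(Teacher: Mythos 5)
Your proposal is correct and matches what the paper intends: the paper explicitly states that the proof of Theorem \ref{thm:MilakisSilvestreGlobalHolderAndC1gam} is ``entirely analogous to that of Theorem \ref{thm:MilakisSilvestreGlobalHolderAndC1gamDirichlet}'' and omits it, and your covering argument---Neumann boundary estimate (Theorem \ref{thm: MilakisSilvestre}) near $\Sigma_0$, interior estimates in the bulk, Dirichlet boundary estimate (Theorem \ref{thm:MilakisSilvestreDirichletRegularity}) with $\phi=0$ near $\Sigma_r$, followed by the standard H\"older stitching trick---is precisely that analogous proof.
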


The above theorem immediately implies estimates for the equation $\I^r(w) = h$ in $\Sigma_0$.

\begin{thm}\label{thm:NonlocalMilakisSilvestreGlobalHolderAndC1gam} 
    Assume that $r\geq 1/2$.  Let $w: \Sigma_0 \to \mathbb{R}$ be a classical solution of 
    \begin{equation*}
		\I^r(w,y) = h(y)\ \ \text{in}\ \Sigma_0.
    \end{equation*}
    There are constants $C>0, \tilde\gamma \in (0,\gamma)$, determined by $d,\lambda,\Lambda$ and $\gamma\in (0,1)$ such that
	\begin{itemize}
		\item[(i)]
    \begin{equation*}
        \|w\|_{C^{\tilde\gamma}(\Sigma_0)} \leq C \left (\|w\|_{L^\infty(\Sigma_0)}+\|h\|_{L^\infty(\Sigma_0)} \right ).
    \end{equation*}
		\item[(ii)]
    \begin{equation*}
        \|w\|_{C^{1,\tilde\gamma}(\Sigma_0)} \leq C \left (\|w\|_{L^\infty( \Sigma_0)}+\|h\|_{C^\gam(\Sigma_0)} \right ).
    \end{equation*}
	\end{itemize}

\end{thm}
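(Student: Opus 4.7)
The plan is to reduce the nonlocal estimate on $\Sigma_0$ to the already-stated Neumann estimate from Theorem \ref{thm:MilakisSilvestreGlobalHolderAndC1gam}, using the very construction of $\I^r$. Given a classical solution $w$ of $\I^r(w,y)=h(y)$, the natural object is $U := U^r_w$ solving the Dirichlet problem \eqref{eqSetUp:DirichletrScale} with data $w$ on $\Sigma_0$ (and $0$ on $\Sigma_r$). By definition of $\I^r$ in \eqref{eqSetUp:IrConstruction}, the assumption $\I^r(w,y)=h(y)$ means exactly that
\[
\partial_\nu U(y) = h(y)\ \ \text{on}\ \Sigma_0.
\]
So the same function $U$ simultaneously solves the Neumann problem considered in Theorem \ref{thm:MilakisSilvestreGlobalHolderAndC1gam}: namely $F(D^2 U)=0$ in $\Sigma^r$, $U=0$ on $\Sigma_r$, and $\partial_\nu U = h$ on $\Sigma_0$.

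Next, I would bound $\|U\|_{L^\infty(\bar\Sigma^r)}$ by $\|w\|_{L^\infty(\Sigma_0)}$. This follows from the maximum principle applied to the Dirichlet representation, since the data of $U$ are $w$ on $\Sigma_0$ and $0$ on $\Sigma_r$, and $F(0)=0$ (so constants are solutions). Then I apply the two conclusions of Theorem \ref{thm:MilakisSilvestreGlobalHolderAndC1gam} to this Neumann problem: part (i) gives
\[
\|U\|_{C^{\tilde\gamma}(\bar\Sigma^r)} \leq C\left(\|U\|_{L^\infty(\bar\Sigma^r)} + \|h\|_{L^\infty(\Sigma_0)}\right) \leq C\left(\|w\|_{L^\infty(\Sigma_0)} + \|h\|_{L^\infty(\Sigma_0)}\right),
\]
and part (ii) gives the analogous $C^{1,\tilde\gamma}$ bound with $\|h\|_{C^\gamma}$ in place of $\|h\|_{L^\infty}$.

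Finally, since $w = U|_{\Sigma_0}$, the restriction to $\Sigma_0$ is trivially controlled: one has $\|w\|_{C^{\tilde\gamma}(\Sigma_0)}\leq \|U\|_{C^{\tilde\gamma}(\bar\Sigma^r)}$, and for part (ii) one observes that $Dw$ on $\Sigma_0$ consists of the tangential components of $DU|_{\Sigma_0}$ (the normal component being $h$ itself), so $\|w\|_{C^{1,\tilde\gamma}(\Sigma_0)}$ is bounded by $\|U\|_{C^{1,\tilde\gamma}(\bar\Sigma^r)}$. Combining these inequalities produces the desired bounds.

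There is really no serious obstacle here, only a small bookkeeping point: one must confirm that ``classical solution'' of the nonlocal equation (as in the hypothesis) indeed means $\partial_\nu U$ is attained pointwise, so that the identification of $U$ as a solution to the Neumann problem is valid. This is guaranteed by Theorem \ref{thm:MilakisSilvestreDirichletRegularity}(ii) plus Lemma \ref{lem:NeumannVSisClassical}, which together ensure that under H\"older regularity of the data, boundary values and normal derivatives are attained in the classical sense and all the boundary operations are justified.
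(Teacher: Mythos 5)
Your argument is correct and matches the paper's intent exactly: the paper states this theorem with no proof, remarking only that it follows ``immediately'' from Theorem \ref{thm:MilakisSilvestreGlobalHolderAndC1gam}, and your reduction (observing that $U^r_w$ simultaneously solves the Neumann problem with data $h$, bounding $\|U^r_w\|_{L^\infty}$ by $\|w\|_{L^\infty}$ via comparison with constants, then restricting the Neumann estimates to $\Sigma_0$) is precisely the implicit argument. The bookkeeping point at the end about classical attainment of $\partial_\nu U$ is the right thing to flag and is handled by the cited regularity results.
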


Another tool needed in Section \ref{sec:ProofThmBoundary} is the following Liouville theorem.

\begin{lem}\label{lem:LiovillePropForInfiniteDomain}
	If $W$ is a bounded viscosity solution of
	\begin{equation*}%\label{eqHomog:NeumannInfinityDomain}
		\begin{cases}
			\displaystyle	F(D^2W) = 0 \ &\text{in}\ \Sigma^\infty=\{X\cdot\nu>0\},\\
			\displaystyle	\partial_\nu W=0\ &\text{on}\ \Sigma^{\infty}_0,
		\end{cases}
	\end{equation*}
	then $W$ is a constant.
\end{lem}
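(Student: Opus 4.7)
The plan is to prove this Liouville property by a rescaling argument that exploits the positive $1$-homogeneity of $F$ together with the boundary H\"older estimate of Milakis--Silvestre for the Neumann problem (Theorem \ref{thm: MilakisSilvestre}(i)). The idea is that the governing equation and the boundary condition are both invariant under the dilations $X\mapsto RX$, so the $C^{\tilde\gamma}$ bound on a half-ball of unit size can be transported to arbitrarily large half-balls without any loss of constant.

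Concretely, for each $R\geq 1$ I would define $W_R:\overline{B_1^+}\to\real$ by $W_R(X)=W(RX)$. Because $F$ is positively $1$-homogeneous (\ref{eqSetUp:1HomF}),
\begin{equation*}
F(D^2W_R(X)) \;=\; F\bigl(R^2 D^2W(RX)\bigr) \;=\; R^2\, F(D^2W(RX)) \;=\; 0 \quad\text{in } B_1^+,
\end{equation*}
and on the flat piece $B_1'\subset\Sigma_0$ one has $\partial_\nu W_R = R\,\partial_\nu W(R\,\cdot) = 0$. Moreover $\|W_R\|_{L^\infty(B_1^+)}\leq \|W\|_{L^\infty(\Sigma^\infty)}$, which is finite by hypothesis. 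Thus $W_R$ meets the assumptions of Theorem \ref{thm: MilakisSilvestre}(i) with vanishing Neumann datum, yielding
\begin{equation*}
[W_R]_{C^{\tilde\gamma}(B_{1/4}^+)} \;\leq\; C\|W\|_{L^\infty(\Sigma^\infty)},
\end{equation*}
with $C$ and $\tilde\gamma\in(0,1)$ independent of $R$. Unscaling via $X=Y/R$ converts this into
\begin{equation*}
[W]_{C^{\tilde\gamma}(B_{R/4}^+)} \;\leq\; C R^{-\tilde\gamma}\,\|W\|_{L^\infty(\Sigma^\infty)}.
\end{equation*}
By translation invariance of $F$ and the boundary condition, the same estimate holds on $B_{R/4}^+(x_0)$ for any $x_0\in\Sigma_0$, so covering $\overline{\Sigma^\infty}$ by such half-balls of radius $R/4$ gives $[W]_{C^{\tilde\gamma}(\overline{\Sigma^\infty})}\leq CR^{-\tilde\gamma}\|W\|_{L^\infty}$. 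Letting $R\to\infty$ forces the H\"older seminorm of $W$ to vanish, so $W$ is constant on $\overline{\Sigma^\infty}$.

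The only mildly delicate point is ensuring that one can indeed apply Theorem \ref{thm: MilakisSilvestre} to the rescaled function $W_R$; this is essentially a matter of verifying that viscosity solutions are preserved under the dilation, which is routine given that the definition of viscosity solution uses only $D^2u$ and $\partial_\nu u$, both of which transform correctly. I do not expect a substantive obstacle. The main conceptual ingredient is simply that the Neumann H\"older estimate depends only on universal constants and on $\|W\|_\infty$, so the scale-invariance of the problem converts a fixed-scale regularity bound into an oscillation-decay result, i.e.\ the standard blow-down argument underlying Liouville theorems in the fully nonlinear setting.
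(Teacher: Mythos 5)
Your proof is correct but follows a genuinely different route from the paper's. You invoke the scale-fixed boundary H\"older estimate of Theorem~\ref{thm: MilakisSilvestre}(i), then use the positive $1$-homogeneity of $F$ to show that $W_R(X)=W(RX)$ solves the same Neumann problem, and transport the seminorm bound to scale $R$; sending $R\to\infty$ kills the seminorm. The paper instead appeals directly to the oscillation lemma \cite[Section 8, eq.\ 8.2]{MiSi-2006NeumannRegularity}, which states $\osc_{B_r^+(x_0)} W \leq \mu\,\osc_{B_{2r}^+(x_0)} W$ for some universal $\mu\in(0,1)$ at every scale $r>0$, and iterates this geometrically. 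The two arguments are morally the same -- the iterated oscillation decay $\mu^k = 2^{-k\tilde\gamma}$ is exactly what underlies the H\"older estimate you cite -- but the paper's route is shorter and, notably, does not need the $1$-homogeneity of $F$: the oscillation lemma already holds uniformly in $r$ for any uniformly elliptic translation-invariant $F$ with $F(0)=0$, so the dilation argument you perform is unnecessary. Conversely, your approach makes visible exactly where scale-invariance enters and decays the full $C^{\tilde\gamma}$ seminorm rather than just the oscillation, which some readers may find more transparent. One small wording nit: the bound you obtain on $[W]_{C^{\tilde\gamma}}$ is over half-balls $B_{R/4}^+(x_0)$, not all of $\overline{\Sigma^\infty}$ for a fixed $R$; what you actually use is that any two fixed points eventually lie in a common half-ball, which is how you legitimately pass to the limit $R\to\infty$.
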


\begin{proof}
     This is a straightforward consequence of the oscillation lemma for the Neumann problem. We recall that \cite[Section 8, equation 8.2]{MiSi-2006NeumannRegularity}, there exists a $\mu \in (0,1)$ which is determined by $d,\lambda$ and $\Lambda$ such that if $r>0$ and $x_0 \in \Sigma_0$, then 
    \begin{equation*}
         \osc \limits_{B_r^+(x_0)} W  \leq \mu \osc \limits_{B_{2r}^+(x_0)} W.
    \end{equation*}
    Taking $r=1$ and applying the above estimate successively to balls of radii $2^k$ yields
    \begin{equation*}
         \osc \limits_{B_{1}^+(x_0)} W  \leq \mu^k \osc \limits_{B_{2^k}^+(x_0)} W,\;\;\forall\;k\in \mathbb{N}.
    \end{equation*}
    On the other hand, the oscillation of $W$ over any subset of $\Sigma^\infty$ is bounded by $2\|W\|_{L^\infty(\Sigma^\infty)}$, this combined with the above inequality implies that
    \begin{equation*}
         \osc \limits_{B_{1}^+(x_0)} W  \leq 2 \mu^k\|W\|_{L^\infty(\Sigma^\infty)} \;\;\forall\;k\in \mathbb{N}.
    \end{equation*}
    Since $\mu \in (0,1)$ and $k$ is arbitrary this shows that the oscillation of $W$ in $B_{1}^+(x_0)$ is zero, and since $x_0 \in \Sigma_0$ is arbitrary it follows that $W\mid_{\Sigma_0}$, and thus $W$, must be a constant. 
\end{proof}

%%%%%%%%%%%%%%%%%%%%%%%%%%%%%%%%%%%%%%%%%%%%%%
%%%%%%%%%%%%%%%%%%%%%%%%%%%%%%%%%%%%%%%%%%%%%%

\bibliography{../refs}
\bibliographystyle{plain}
%%%%%%%%%%%%%%%%%%%%%%%%%%%%%%%%%%%%%%%%%%%%%%
\end{document}